\documentclass[11pt,leqno]{article}
%\usepackage[utf8]{inputenc}
%%%%%%%%%%%%%%%%%%%%%%%%%
%
% PACKAG\E P\ERSONN\EL
%
%%%%%%%%%%%%%%%%%%%%%%%%%
\usepackage{a4wide}
\usepackage{amsmath}
\usepackage{amsfonts}
\usepackage{amssymb}
\usepackage{amsthm}
\usepackage{latexsym}
\parindent 0pt
\frenchspacing \sloppypar

\newtheorem{theorem}{Theorem}[section]
\newtheorem{definition}[theorem]{Definition}

\newtheorem{lemma}[theorem]{Lemma}
\newtheorem{remark}[theorem]{Remark}
\newtheorem{proposition}[theorem]{Proposition}

\newtheorem{corollary}[theorem]{Corollary}
\newtheorem{example}[theorem]{Example}

\newcounter{exocpt}

\def\supp#1{\text{supp}(#1)}
\newcommand{\im}{\text{Im }}

\renewcommand{\H}{{\mathcal H}}

\def\C{\mathbb C}
\def\R{\mathbb R}
\def\I{\mathbb I}
\def\N{\mathbb N}
\def\al{\alpha}

\def\be{\beta}

\def\de{\delta}
\def\rh{\rho}
\def\et{\eta}

\def\ve{\varepsilon}

\def\la{\lambda}

\def\va{\varphi}
\def\ta{\tau}
\def\sp#1#2{\langle{#1},{#2}\rangle}

\def\c{\mathfrak{c}}

\def\g{\mathfrak{g}}
\def\gg{\mathfrak{g}}

\def\b{\mathfrak{b}}

\def\noi{\noindent}

\def\la{\lambda}
\def\ve{\varepsilon}

\def\ch{\chi}
\def\ta{\tau}
\def\ps{\psi}

\def\C{\mathbb{C}}

\def\Ad{\rm{\, Ad \,}}

\def\ol#1{\overline{#1}}
\def\nn{\nonumber}

% Macros pour le texte
\def\Re{{\mathbb R}}
\def\R{{\mathbb R}}
\def\C{{\mathbb C}}
\def\N{{\mathbb N}}

\def\Z{{\mathbb Z}}

\def\I{{\mathbb I}}

  \def\Id{{\mathbb I}}
\def\A{{\mathcal A}}
\def\B{{\mathcal B}}
\def\D{{\mathcal D}}
\def\F{{\mathcal F}}

\def\H{{\mathcal H}}
\def\K{{\mathcal K}}

\def\S{{\mathcal S}}
\def\ZZ{{\mathcal Z}}

%%%%%%%%%%%%%%%%%%%%%%%%%%%%%%%%%%%%%%%%%%%%%%%%%%%%%%%%%%%%%%%%%%%%%%%%
%%%%%%%%
%%%%%%%%%%%%%%%%%%%

\def\iy{\infty}

\def\ol#1{\overline{#1}}

\def\hb#1{\hbox{#1}}
\def\val#1{\vert #1\vert}

\def\no#1#2{\Vert #1\Vert_{#2} }
\def\noop#1{\Vert #1\Vert_{\rm op}}

\def\lt{L^2(\R)}

\def\exp#1{\hb{exp}(#1)}
\def\ind#1#2{\hb{ind}_{#1}^{#2}}

\def\supp#1{\text{supp}(#1)}

\def\res#1{_{\vert #1}}
\def\inv{^{-1}}

\def\es{\emptyset}

\def\me{\medskip\noindent}
\def\hb #1{\hbox{#1}}

\def\hb#1{\hbox{#1}}
\def\val#1{\vert #1\vert}

\def\im#1{\hb{im}(#1)}

\def\sp#1#2{\langle #1,#2\rangle }%skalarprodukt

\def\ca#1{{\mathcal #1}}

\def\L#1#2{L^#1(\R^{#2})}
\def\l#1#2{L^{#1}{(#2)}}

\def\lef({\left(}
\def\rig){\right)}

%\def\{\"}

%%%%%%%%%%%%%%%%%%%%%%%%%%
\begin{document}
\title{The $C^*$-algebras of the Heisenberg Group and of thread-like Lie groups.}
\author{Jean Ludwig and Lyudmila Turowska}
\date{}

%\tableofcontents

 \maketitle
\begin{abstract}
We describe the $C^*$-algebras of the Heisenberg group $H_n$, $n\geq 1$, and the
thread-like Lie groups $G_N$, $N\geq 3$,   in terms of $C^*$-algebras of
operator fields.
\end{abstract}
\section{Introduction and notation}
Let $H_n$ be the Heisenberg group of dimension $2n+1$. It has been
known for a long time that the $C^*$-algebra, $C^*(H_n)$, of $H_n$ is
an extension of an ideal $J$ isomorphic to $C_0(\R^*,\K)$ with the
quotient algebra isomorphic to $C^*(\R^{2n})$, where $\K$ is the
$C^*$-algebra of compact operators on a separable Hilbert space,
$\R^*=\R\setminus\{0\}$ and $C_0(\R^*,\K)$ is the $C^*$-algebra of
continuous functions vanishing at infinity from $\R^*$ to $\K$.

We obtain an exact characterisation of this extension giving a linear
mapping from $C^*(\R^{2n})$ to $C^*(H_n)/J$ which is a cross section
of the quotient mapping $i:C^*(H_n)\to C^*(H_n)/J$. More precisely,
realizing $C^*(H_n)$ as a $C^*$-subalgebra of the $C^*$-algebra
$\F_n$ of all operator fields $(F=F(\lambda))_{\lambda\in \R}$
taking values in $\K$ for $\lambda\in \R^*$ and in $C^*(\R^{2n})$ for
$\lambda=0$, norm continuous on $\R^*$ and vanishing as $\lambda\to
\infty$, we construct a linear map $\nu$ from  $C^*(\R^{2n})$ to
$\F_n$,
such that the $C^*$-subalgebra is isomorphic to the $C^*$-algebra
$D_\nu(H_n)$ of all $(F=F(\lambda))_{\lambda\in\R}\in\F_n$ such that
$$\|F(\lambda)-\nu(F(0))\|_{\text{op}}\to 0,$$
where $\|\cdot\|_{\text{op}}$ is the operator norm on $\K$.
The constructed  mapping $\nu$ is an almost homomorphism in the sense that $$\lim_{\lambda\to 0}\|\nu(f\cdot h)(\lambda)-\nu(f)(\lambda)\circ\nu(h)(\lambda)\|_{\text{op}}=0.$$ Moreover, any such almost homomorphism $\tau:C^*(\R^2)\to \F_n$ defines a $C^*$-algebra, $D_\tau(H_n)$, which is an extension of
$C_0(\R^*,\K)$ by $C^*(\R^{2n})$. A question we left unanswered : what mappings $\tau$ give the $C^*$-algebras which are isomorphic to $C^*(H_n)$. We note that the condition $$\lim_{\lambda\to 0}\|\tau(h)(\lambda)\|_{\text{op}}=\| h\|_{C^*(\R^{2n})}, \text{ for all } h\in C^*(\R^{2n}),$$ which is equivalent to the condition that the topologies of $D_{\tau}(H_n)$ and that of $C^*(H_n)$ agree, is not the right condition: there are examples of splitting extensions of type $D_{\tau}(H_n)$ with the same spectrum as $C^*(H_n)$ (see \cite{delaroche} and Example~\ref{example_del}) while it is known that $C^*(H_n)$ is a non-splitting extension.

We note that another  characterisation of $C^*(H_n)$ as a
$C^*$-algebra of operator fields is given without proof in a short paper by
Gorbachev \cite{gorbachev}.

The second part of the paper deals with the $C^*$-algebra of
thread-like Lie groups $G_N$, $N\geq 3$. The group $G_3$ is the Heisenberg group of dimension 3 treated in the first part of the paper. The groups $G_N$  are nilpotent Lie
groups and their unitary representations can be described using the Kirillov orbit method. The topology of the dual space $\widehat{G_N}$  has been investigated in details in \cite{A.L.S.}. In particular, it was shown that like for the Heisenberg group $G_3$  the topology of $\widehat{G_N}$, $N\geq 3$ is not Hausdorff.
It is known that $\widehat{G_3}=\R^*\cup{\R^2}$ as a set with natural topology
on each pieces, the limit set when $\lambda\in \R^*$ goes to  $0$ is the whole
real plane $\R^2$. The topology of $\widehat{G_N}$, becomes more complicated
with growth of the dimension $N$. Using a description of the limit sets of
converging sequences $(\pi_k)\in\widehat{G_N}$ obtained  in
\cite{A.K.L.S.S.} and \cite{A.L.S.} we give a characterisation of the
$C^*$-algebra of $G_N$ in the spirit of one for the Heisenberg group $H_n$.
Namely, parametrising
$\widehat{G_N}$ by a set $S_N^{gen}\cup \R^2$, where $S_N^{gen}$ consists of
element $\ell\in \g_N^*$ corresponding to non-characters (here $\g_N$ is the Lie
algebra of $G_N$), we realize $C^*(G_N)$ as a $C^*$-algebra of operator fields
$(A=A(\ell))$ on $S_N^{gen}\cup\{0\}$,  such that $A(\ell)\in \K$, $\ell\in
S_N^{gen}$, $A(0)\in C^*(\R^2)$ and $(A=A(\ell))$ satisfy for each converging
sequence in the dual space the generic, the character and the infinity
conditions (see Definition~\ref{gencon}).

We shall use the following notation. $L^p(\R^n)$ denote the space of (almost everywhere equivalence classes) $p$-integrable functions for $p=1,2$ with norm $\|\cdot\|_p$.  By $\|f\|_{\infty}$ we denote the supremum norm $\sup_{x\in \Omega}|f(x)|$ of a continuous  function $f$  vanishing at infinity from a locally compact space $\Omega$ to $\C$.
 $\D(\R^n)$  is the space of complex-valued
$C^{\infty}$ functions with compact support and
$\S(\R^n)$ is the space of Schwartz functions, i.e. rapidly decreasing complex-valued $C^{\infty}$ functions on $\R^n$.
The space of Schwartz functions on the groups $H_n$ and $G_N$ (see \cite{CG}) will be denoted by $\S(H_n)$ and $\S(G_N)$ respectively.
We use the usual notation $B(H)$ for
the space of all linear bounded operators on a Hilbert space $H$ with the operator norm $\|\cdot\|_{\text{op}}$.

\bigskip

\noindent{\bf Keywords.} Heisenberg group, thread-like Lie group, unitary representation, $C^*$-algebra.
\bigskip

\noindent{\bf 2000 Mathematics Subject Classification:} 22D25, 22E27, 46L05.

\section{The $C^*$-algebra of the Heisenberg group $H_n$}

Let $H_n$ be the $2n+1$ dimensional Heisenberg group, which is
defined as to be the Lie group whose underlying variety is the
vector space $\R^n\times\R^n\times\R$ and on which the
multiplication is given by
\begin{equation}
\nonumber (x,y,t)(x',y',t')=(x+x',y+y',t+t'+\frac{1}{2} (x\cdot
y'-x'\cdot y)),
\end{equation}
where $ x\cdot y=x_1y_1+\cdots +x_ny_n $ denotes the Euclidean
scalar product on $ \R^n $.
 The center of $H_n$ is the subgroup $\ca Z:=\{0_n\}\times\{0_n\}\times \R$
 and the commutator subgroup $[H_n,H_n]$ of $H_n$ is given by
 $[H_n,H_n]=\ca Z$. The Lie algebra $\g$ of $H_n$ has the basis
 $$\B:=\{X_j,Y_j, j=1\cdots ,n,Z=(0_n,0_n,1)\},$$
where $ X_j=(e_j,0_n,0),Y_j=(0_n,e_j,0),j=1, \cdots ,n $ and  $
e_j $ is the j'th canonical basis vector of $ \R^n $,
 with the non trivial brackets
 \begin{equation}
 \nonumber [X_i,Y_j]=\de_{i,j}Z.
\end{equation}

\subsection{The unitary dual of $H_n$.}\label{unit}

\medskip
 The unitary dual $\widehat H_n$ of $H_n$ can be described
 as follows.
\medskip
\subsubsection{The infinite dimensional irreducible
representations}
\medskip
 For every $\la\in\R^*$, there exists a unitary representation $\pi_\la$
of $H_n$ on
 the Hilbert space $L^2(\R^n)$, which is given by the formula

 \begin{equation}
 \nonumber \pi_\la(x,y,t)\xi(s):=e^{{-2\pi i}  \la t{-2\pi i}   \frac{\la} 2
x\cdot y{+2\pi i}\la
 s\cdot y}\xi(s-x),\ s\in\R^n,\xi\in L^2(\R^n),(x,y,t)\in H_n.
 \end{equation}

It is easily seen that $\pi_\la$ is in fact irreducible and that
$\pi_\la$ is  equivalent to $\pi_\nu$ if and only if $\la=\nu$.

The representation $\pi_\la$ is  equivalent to the induced
representation $\ta_\la:=\ind P {H_n} \ch_\la$, where
$P=\{0_n\}\times \R^n\times\R$ is a polarization at the linear
functional $\ell_\la((x,y,t)):=\la t, (x,y,t)\in \g$ and where
$\ch_\la$ is the character of $P$ defined by
$\ch_\la(0_n,y,t)=e^{{-2\pi i}  \la t}$.

The theorem of Stone-Von Neumann tells us that every infinite dimensional
unitary representation of $ H_n $ is equivalent to one of the   $ \pi_\la $'s.
(see \cite{CG}).
\subsubsection{The finite dimensional irreducible
representations}{}
\medskip
Since $ H_n $ is nilpotent, every irreducible finite dimensional
representation of $ H_n $ is one-dimensional, by Lie's theorem.

Any one-dimensional representation is  a unitary
character $\ch_{a,b}$,  $(a,b)\in \R^n\times\R^n$,  of $H_n$, which is given by
\begin{equation}
\nonumber \ch_{a,b}(x,y,t)=e^{{-2\pi i}   (a\cdot  x+b\cdot y)}, (x,y,t)\in
H_n.
\end{equation}
 For $ f\in L^1(H_n) $, let
\begin{equation}
 \nn \hat f(a,b):=\ch_{a,b}(f)=\int_{H_n}f(x,y,t)e^{{-2\pi i}  (x\cdot a+y\cdot
b)}dxdydt,\ a,b\in \R^n,
\end{equation}
and
$$ \no f{\iy,0}:=\sup_{a,b\in
\R^{n}}\val{\ch_{a,b}(f)}=\no {\hat f}\iy .$$

\subsection{The topology of $\widehat {C^*(H_n)}$}

Let $C^*(H_n)$ denote the full $C^*$-algebra of $H_n$. We recall
that $C^*(H_n)$ is obtained by the completion of $L^1(H_n)$ with
respect to the norm
$$\no f{C^*(H_n)}=\sup \no{\int
f(x,y,t)\pi(x,y,t)dxdydt}{\text{op}},$$ where the supremum is
taken over all unitary representations $\pi$ of $H_n$.
\begin{definition}\label{pi0}

\rm  Let
$$ \rho=\ind \ZZ{H_n}1 $$ be the left regular representation of $
G_N $ on the Hilbert space $ L^2(G_N/\ZZ) $. Then the image $
\rho(C^*(H_n)) $ is just the $ C^*$-algebra of $ \R^{2n} $ considered
as an algebra of convolution operators on $ L^2(\R^{2n}) $ and $ \rho
(C^*(H_n))$ is isomorphic to the algebra $ C_0(\R^{2n}) $ of continuous
functions vanishing at infinity on $ \R^{2n} $ via the Fourier transform.
For $f\in L^1(H_n)$ we have $\widehat{\rho(f)}(a,b)=\hat f(a,b,0)$, $a$, $b\in\R^{n}$.

 \end{definition}

\begin{definition}\label{c3hn
} \rm
Define for $ C^*(H_n) $  the \textit{Fourier transform} $ F(c)$ of $ c $ by
$$ F(c)(\la):=\pi_\la(c)\in B(L^{2}(\R^n)),\la\in\R^* $$and
$$ F(c)(0):=\rho(c)\in C^*(\R^{2n}).$$
 \end{definition}

\subsubsection{Behavior on $ \R^* $}

As for the topology of the dual space, it is well known that
$[\pi_\la]  $ tends to $[\pi_\nu]  $ in $\widehat H_n$ if and only
if $\la$  tends to $\nu$ in $\R^*$, where $[\pi]$ denotes the unitary
equivalence class of the unitary representation $\pi$. Furthermore, if $ \la $ tends
to 0, then the representations $ \pi_\la $  converge in the dual
space topology to all the characters $ \ch_{a,b}, a,b\in\R^n $.

Let us compute  for $f\in L^1(H_n)$ the operator $\pi_\la(f)$. We
have for $\xi\in L^2(\R^n)$ and $s\in\R^n$ that
\begin{eqnarray}\label{pila}
\nonumber
\pi_\la(f)\xi(s)&=&\int_{\R^n\times\R^n\times\R}f(x,y,t)\pi_\la(x,y,
t)\xi(s)dxdydt \\
\nonumber &=&\int_{\R^n\times\R^n\times\R}f(x,y,t)e^{{-2\pi i}  \la
t-\frac{2\pi i\la}{2}x\cdot y {+2\pi i}\la s\cdot
y}\xi(s-x) dxdydt \\
 &=&\int_{\R^n\times\R^n\times\R}f(s-x,y,t)e^{{-2\pi i}  \la
t-\frac{2\pi i\la}{2}(s-x)\cdot y {+2\pi i}\la s\cdot
y}\xi(x)dx dydt  \\
\nonumber &=&\int_{\R^n}\hat
f^{2,3}(s-x,-\frac{\la}{2}(s+x),\la)\xi(x)dx.
\end{eqnarray}

Here
$$\hat f^{2,3}(s,u,\la)=\int_{\R^n\times\R}f(s,y,t)e^{{-2\pi i}  ( y\cdot
u+\la t) }dydt, \ (s,u,\la)\in \R^n\times\R^n\times\R$$ denotes
the partial Fourier transform of $f$ in the variables $y$ and $t$.

Hence $\pi_\la(f)$ is a kernel operator with kernel
\begin{equation}\label{kernel}
 f_\la(s,x):=\hat f^{2,3}(s-x,-\frac{\la}{2}(s+x),\la),\
s,x\in\R^n.
\end{equation}

If we take now a Schwartz-functions $f\in \S(H_n)$, then the
operator $\pi_\la(f)$ is  Hilbert-Schmidt and its Hilbert-Schmidt
norm $\no{\pi_\la(f)}{\text{H.S.}} $ is given by
\begin{equation}\label{hilbsch}
\no{\pi_\la(f)}{\text{H.S.}}^2=\int_{\R^2}\val{f_\la(s,x)}^2dxds=\int_{\R^2}\val
{\hat f^{2,3}(s,{\la}x,\la)}^2dsdx<\iy.
\end{equation}

\begin{proposition}\label{convinnorm}
For any $ c\in C^*(H_n)$ and $ \la{\in\R^*} $, the operator $
\pi_\la(c)  $ is compact , the mapping $ \R^*\to B(L^2(\R^n)):
\la\mapsto \pi_\la(c)$ is norm continuous and tending to 0 for $
\la $ going to infinity.
 \end{proposition}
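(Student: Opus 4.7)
The plan is to reduce all three assertions to the case where the element of $C^*(H_n)$ is a Schwartz function on $H_n$, exploiting the density of $\S(H_n)\subset L^1(H_n)$ in $C^*(H_n)$ together with the contractivity $\|\pi_\la(c)\|_{\mathrm{op}}\leq \|c\|_{C^*(H_n)}$. For Schwartz $f$ I will use formula (\ref{kernel}), which presents $\pi_\la(f)$ as an integral operator with kernel $f_\la$, together with the Hilbert--Schmidt identity (\ref{hilbsch}).

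First I would verify compactness. Since $f_\la\in L^2(\R^n\times\R^n)$ for Schwartz $f$, the operator $\pi_\la(f)$ is Hilbert--Schmidt, hence compact. Approximating a general $c\in C^*(H_n)$ by $f_k\in \S(H_n)$ in the $C^*$-norm, contractivity gives $\pi_\la(f_k)\to \pi_\la(c)$ in operator norm; since $\K$ is norm-closed in $B(L^2(\R^n))$, we get $\pi_\la(c)\in \K$.

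Next the norm continuity on $\R^*$. For $f\in \S(H_n)$ the difference $\pi_\la(f)-\pi_\mu(f)$ is again a kernel operator with kernel $f_\la-f_\mu$, so the Hilbert--Schmidt bound gives
\[
\|\pi_\la(f)-\pi_\mu(f)\|_{\mathrm{op}}^2\leq \int_{\R^n\times\R^n}|f_\la(s,x)-f_\mu(s,x)|^2\,ds\,dx.
\]
I would let $\mu\to\la$ inside the integral by dominated convergence, using continuity of $\hat f^{2,3}$ jointly in its three arguments together with a $\mu$-uniform Schwartz-type majorant on a small neighbourhood of $\la$ built from the rapid decay of $\hat f^{2,3}$. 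The passage from Schwartz functions to an arbitrary $c\in C^*(H_n)$ is then a standard three-epsilon argument based on
\[
\|\pi_\la(c)-\pi_{\la_0}(c)\|_{\mathrm{op}}\leq 2\|c-f\|_{C^*(H_n)}+\|\pi_\la(f)-\pi_{\la_0}(f)\|_{\mathrm{op}}.
\]

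Finally, for vanishing at infinity I would perform the change of variables $u=s-x$, $v=s+x$ followed by $w=-\la v/2$ in (\ref{hilbsch}), arriving at
\[
\|\pi_\la(f)\|_{\mathrm{HS}}^2=\frac{1}{|\la|^n}\int_{\R^n\times\R^n}|\hat f^{2,3}(u,w,\la)|^2\,du\,dw.
\]
The integral is bounded uniformly in $\la$ by dominating $|\hat f^{2,3}|^2$ by a fixed Schwartz function of $(u,w)$, so the prefactor $|\la|^{-n}$ forces $\|\pi_\la(f)\|_{\mathrm{op}}\to 0$ as $|\la|\to\infty$; the same three-epsilon argument then transfers this to all of $C^*(H_n)$. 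The only delicate point in the whole proof is arranging a $\la$-independent dominating function for the dominated convergence step in the continuity argument, but this is routine given that $\hat f^{2,3}$ is Schwartz in all its variables.
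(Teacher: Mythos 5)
Your proof is correct and follows essentially the same route as the paper: reduce to $f\in\S(H_n)$ by density and contractivity of $\pi_\la$, use the kernel formula (\ref{kernel}) and the Hilbert--Schmidt bound (\ref{hilbsch}) to get compactness, dominated convergence (with the $\la$-uniform Schwartz majorant you mention) for norm continuity, and the decay of the Hilbert--Schmidt norm for the behaviour at infinity. Your explicit extraction of the factor $|\la|^{-n}$ is just a slightly more quantitative way of phrasing the paper's observation that the integral in (\ref{hilbsch}) tends to $0$ as $|\la|\to\infty$.
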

\begin{proof}
Indeed, for $ f\in\S(H_n) $, the compactness of the operator $
\pi_\la(f) $ is a consequence of (\ref{hilbsch}) and by (\ref{pila}) we have the
estimate:
\begin{eqnarray}
\nonumber
\no{\pi_\la(f)-\pi_\nu(f)}{H.S}^2 &=&  \int_{\R^n\times\R^n}\val{\hat
f^{2,3}(s-x,-\frac{\la}{2}(s+x),\la)
-\hat f^{2,3}(s-x,-\frac{\nu}{2}(s+x),\nu) }^2 ds dx\\
\nonumber &=&  \int_{\R^n\times\R^n}\val{\hat
f^{2,3}(s,{\la}x,\la) -\hat f^{2,3}(s,{\nu}x,\nu) }^2 ds dx
\end{eqnarray}
Hence, since $ f $ is a Schwartz function, this expression goes to
0 if $ \la $ tends to $ \nu $ by Lebesgue's theorem of dominated
convergence. Therefore the mapping $ \la\mapsto \pi_\la(f)$ is
norm continuous. Furthermore, the Hilbert-Schmidt  norms of the
operators $ \pi _\la(f)$ go to 0, when $ \la  $ tends to infinity.
The proposition follows from the density of $ \S(H_n) $ in $ C^*(H_n)
$.
 \end{proof}

\subsubsection{Behavior in 0}
Let us now see the behavior of $\pi_\la(f)$ for Schwartz
functions $f\in \S(H_n)$, as $\la$ tends to 0.

%We look at convergence to characters.
 Choose a Schwartz-function $\et$ in $\S(\R^n)$ with $L^2$-norm
equal to 1. For $u=(a,b)$ in $\R^n\times\R^n,\ \la \in\R^*$, we
define the function $\et(\la,a,b)$ by
\def\laa{\val{\la}^{1/2}}\def\laaa{\val{\la}^{1/4}}
\def\lav{\val{\la}}
\def\laaa{\val{\la}^{1/4}}
\def\laai{\val{\la}^{-1/2}}\def\laaai{\val{\la}^{-1/4}}
\begin{equation}\label{etalau}
 \et(\la,a,b)(s):=\lav^{n/4} e^{ 2\pi i
a\cdot s}\et(\val{\la}^{1/2}(s+\frac{b }{\la} ))\
s\in\R^n.
\end{equation}
and let
$\et_\la(s)=|\la|^{n/4}\et(|\la|^{1/2}s), \ s\in\R^n$.

 Let us compute
\begin{eqnarray}\label{cuu'}
 \nonumber c_{\la,u,u'}(x,y,t)&=&\sp{\pi_\la(x,y,t)\et(\la,u)}{\et(\la,u')}\\
\nonumber &=&\int_{\R^n} e^{{-2\pi i}  \la t{-2\pi i}  (\la/2) x\cdot
y}e^{2\pi i\la
s\cdot y}\et(\la,u)(s-x)\ol{\et(\la,u')(s)}
ds  \\
\nonumber &=& \lav^{n/2} e^{{-2\pi i}  \la t {-2\pi i}  (\la/2) x\cdot
y}\int_{\R^n}
e^{2\pi i\la s\cdot y{-2\pi i}  a\cdot x }e^{2\pi i(a-a')\cdot s}\\
&&\et(\lav^{1/2}(s-x+\frac{b}{\la}
))\ol{\et(\lav^{1/2}(s+\frac{b'}{\la}))}
ds      \\
\nn &=& \lav^{n/2} e^{{-2\pi i}  \la t {-2\pi i}  (\la/2) x\cdot
y}e^{{-2\pi i} b\cdot
y}e^{{-2\pi i}  a\cdot x}\int_{\R^n} e^{2\pi i\la s\cdot y}e^{2\pi i(a-a')\cdot
(s-\frac{b}{\la})}\\
\nn&&\et(\lav^{1/2}(s-x)\ol{\et(\lav^{1/2}(s+\frac{b'-b}{\la}))}
ds.
\end{eqnarray}

Hence for $u=u'$ we get
 \begin{eqnarray}
  \nonumber c_{\la,u,u}(x,y,t)&=& e^{{-2\pi i}  \la t {-2\pi i}  \frac{\la}{2}
x\cdot y}e^{{-2\pi i}  a\cdot x {-2\pi i}   b\cdot y}\int_{\R^n} e^{2\pi i ({\rm
sign}{\la})\lav^{1/2} s\cdot
y}\et(s-\lav^{1/2} x )\ol{\et(s)} ds\\
 \nonumber &\to&e^{{-2\pi i}  a\cdot x {-2\pi i}
b\cdot y}\int_{\R^n} \et(s )\ol{\et(s)}ds=e^{{-2\pi i}  a\cdot x {-2\pi i}
b\cdot
y}.
\end{eqnarray}

 It follows also
that the convergence of the coefficients $ c_{\la,u,u} $ to the
characters $ \ch_{a,b} $ is uniform in $u$ and uniform on
compacta in $(x,y,t)$ since
\begin{eqnarray}
\nonumber \vert c_{\la,u,u}(x,y,t)-\ch_{a,b}(x,y,t)\vert&=&\vert
\int_{\R^n} ( e^{{-2\pi i}  \la t {-2\pi i}  \frac{\la}{2} x\cdot y}e^{2\pi i
({\rm
sign}\la)\laa s\cdot y }\\
\nn &&\et(s-\lav^{1/2} x
)\ol{\et(s)}-\val{\et(s)}^2)ds\vert \to 0\\
\nonumber &&\text{ as } \la\to 0.
\end{eqnarray}

%Hence for every element $ c $ in $ C^*(H_n) $, it follows that $ \lim_{\la\to
%0}\langle{\pi_\la(c)\et(\la,u)},{\et(\la,u)}\rangle=\widehat{c(0)}(u) $
%uniformly in $ u $ by \cite{Di}.

\begin{proposition}\label{conforetlaab}
For every $ u=(a,b)\in\R^{n}\times\R^n, c\in C^*(H_n) $, we have
that
\begin{equation}
 \nn \lim_{\la\to 0} \no{ F(c)(\la)\et(\la,u)-
 \widehat{F(c)(0)}(u)\et(\la,u)}2=0
\end{equation}
uniformly in $ (a,b) $.
 \end{proposition}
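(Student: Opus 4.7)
The plan is to first reduce to Schwartz data by density, and then to derive the required norm convergence from the already-established weak convergence of matrix coefficients via a polarization identity.

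Since $\|\et(\la,u)\|_2=1$ for every $\la\in\R^*$ and $u\in\R^{2n}$ (a direct change of variables), and since $\|\pi_\la(c)\|_{\text{op}}\le\|c\|_{C^*(H_n)}$ and $|\widehat{F(c)(0)}(u)|\le\|\rho(c)\|_{C^*(\R^{2n})}\le\|c\|_{C^*(H_n)}$ hold uniformly in $(\la,u)$, the quantity $\|F(c)(\la)\et(\la,u)-\widehat{F(c)(0)}(u)\et(\la,u)\|_2$ depends Lipschitz-continuously on $c$, uniformly in $(\la,u)$. Hence it is enough to prove the proposition for $c=f\in\S(H_n)$, in which case $\widehat{F(f)(0)}(u)=\hat f(a,b,0)=\ch_{a,b}(f)$.

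For such $f$, I would use that $\et(\la,u)$ is a unit vector to expand
\begin{align*}
\|\pi_\la(f)\et(\la,u)&-\ch_{a,b}(f)\et(\la,u)\|_2^2\\
&=\langle\pi_\la(f^**f)\et(\la,u),\et(\la,u)\rangle-2\Re\bigl(\overline{\ch_{a,b}(f)}\langle\pi_\la(f)\et(\la,u),\et(\la,u)\rangle\bigr)+|\ch_{a,b}(f)|^2.
\end{align*}
Because $\ch_{a,b}$ is a character of $C^*(H_n)$, one has $\ch_{a,b}(f^**f)=|\ch_{a,b}(f)|^2$. The problem therefore reduces to proving, for every $g\in\S(H_n)$, that
\begin{equation*}
\langle\pi_\la(g)\et(\la,u),\et(\la,u)\rangle\longrightarrow\ch_{a,b}(g)\qquad(\la\to 0)
\end{equation*}
uniformly in $u$; applying this to $g=f$ and $g=f^**f$ (which is again a Schwartz function on $H_n$, as $\S(H_n)$ is a convolution algebra for every nilpotent Lie group) forces the right-hand side of the expansion to $0$, uniformly in $u$.

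The left-hand side above equals $\int g(x,y,t)\,c_{\la,u,u}(x,y,t)\,dxdydt$, where $c_{\la,u,u}$ is the matrix coefficient already computed in (\ref{cuu'}). The essential observation is that the factors $e^{-2\pi i a\cdot x}$ and $e^{-2\pi i b\cdot y}$ pull out of the integral defining $c_{\la,u,u}$, so that $|c_{\la,u,u}(x,y,t)-\ch_{a,b}(x,y,t)|$ is actually independent of $u$; combined with the bound $|c_{\la,u,u}|\le 1$ and the pointwise convergence $c_{\la,u,u}\to\ch_{a,b}$ uniform on compacta, a standard splitting of $\int g\cdot(c_{\la,u,u}-\ch_{a,b})$ into a large compact piece (where dominated convergence applies) and a tail (made arbitrarily small by Schwartz decay of $g$) delivers the uniform-in-$u$ convergence of the inner product. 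The main technical point is simply to package this uniformity identically for $g=f$ and for $g=f^**f$; once this is in hand, substituting the two limits back into the polarization identity and then invoking the density of $\S(H_n)$ in $C^*(H_n)$ completes the proof.
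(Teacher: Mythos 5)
Your proposal is correct and follows essentially the same route as the paper: expanding $\|\pi_\la(c)\et(\la,u)-\ch_{a,b}(c)\et(\la,u)\|_2^2$ into matrix coefficients and feeding in the previously computed uniform convergence $c_{\la,u,u}\to\ch_{a,b}$, with the character identity $\ch_{a,b}(f^* * f)=|\ch_{a,b}(f)|^2$ making the four terms cancel. The only difference is that you make explicit the density reduction to $\S(H_n)$ and the uniform-in-$u$ convergence of $\langle\pi_\la(g)\et(\la,u),\et(\la,u)\rangle\to\ch_{a,b}(g)$, which the paper leaves implicit.
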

\begin{proof}
For $ c\in C^*(H_n) $ we have  that

$\no{ F(c)(\la)\et(\la,u)-\widehat{F(c)(0)}(u)\et(\la,u)}2^2=\no{ \pi_\la(c)\et(\la,u)-\chi_{a,b}(c)\et(\la,u)}2^2=$
\begin{eqnarray}
\nonumber &=& \langle{\pi_\la(c)\et(\la,u)-\chi_{a,b}(c)\et(\la,u)
},{\pi_\la(c)\et(\la,u)-\chi_{a,b}(c)\et(\la,u)}\rangle\\
\nonumber &=&\langle{\pi_\la(c^*\ast
c)\et(\la,u)},{\et(\la,u)}\rangle -
\ol{\chi_{a,b}(c)}\langle{\pi_\la( c)\et(\la,u)},{\et(\la,u)}\rangle
\\
\nonumber &-& \chi_{a,b}(c)\ol{\langle{\pi_\la(
c)\et(\la,u)},{\et(\la,u)}\rangle}+\vert \chi_{a,b}(c)\vert^2\\
\nonumber &\to &  \vert \chi_{a,b}(c)\vert^2-\vert \chi_{a,b}(c)\vert^2-\vert \chi_{a,b}(c)\vert^2+
\vert \chi_{a,b}(c)\vert^2=0.
\end{eqnarray}

 \end{proof}

\subsection{A $C^*$-condition}\label{matrix}
The aim of this section is to obtain  a characterization of the  $C^*$-algebra
$C^*(H_n)$ as a $C^*$-algebra of operator fields (\cite{Lee1, Lee2}).

 Let us first define a larger $C^*$-algebra $\mathcal{F}_n$.
\begin{definition}
\rm Let $\mathcal F_n$  be the family  consisting of all operator
fields $(F=F(\la))_{\la\in\R}  $ satisfying the following
conditions:
\begin{enumerate}
\item $F(\la)$ is a compact operator on $L^2(\R^n)$ for every
$\la\in\R^*$, \item $ F(0)\in C^*(\R^2)$, \item the mapping
$\R^*\to B(L^2(\R^n)): \la\mapsto F(\la)$ is norm continuous,
\item $ \lim_{\la\to\iy}\noop {F(\la)}=0$.
%\item $ \lim_{\la\to 0}\Vert F(\la)\et_{\la,a,b}-\hat
%F(a,b)\et_{\la,a,b}\Vert_2 =0$,
%\item  $ \lim_{\la\to
%0}\Vert F(\la)^*\et_{\la,a,b}-\ol{\hat
%F(a,b)}\et_{\la,a,b}\Vert_2 =0$.
 \end{enumerate}
\end{definition}
\begin{proposition}
$ \F_n $ is a $C^*$-algebra.
\end{proposition}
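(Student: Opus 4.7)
The plan is to equip $\F_n$ with pointwise operations
$(F+G)(\lambda)=F(\lambda)+G(\lambda)$, $(FG)(\lambda)=F(\lambda)G(\lambda)$, $F^*(\lambda)=F(\lambda)^*$, and the supremum norm $\|F\|=\sup_{\lambda\in\R}\|F(\lambda)\|_{\text{op}}$. Once we check that these operations preserve the four defining conditions and that $\F_n$ is complete with respect to $\|\cdot\|$, the $C^*$-identity comes for free because it holds pointwise at every $\lambda$.

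First I would verify closure under the algebraic operations. Condition (1) is preserved because the compact operators form a closed $*$-ideal in $B(L^2(\R^n))$. Condition (2) is preserved because $C^*(\R^{2n})$ is itself a $C^*$-algebra. For condition (3), norm continuity on $\R^*$ passes to sums and adjoints directly, and to products via the estimate $\|F(\lambda)G(\lambda)-F(\mu)G(\mu)\|_{\text{op}}\leq\|F(\lambda)\|_{\text{op}}\|G(\lambda)-G(\mu)\|_{\text{op}}+\|F(\lambda)-F(\mu)\|_{\text{op}}\|G(\mu)\|_{\text{op}}$, using local boundedness guaranteed by continuity on $\R^*$ and by (4) at infinity. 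Condition (4) is preserved by sums via the triangle inequality, by adjoints since $\|F(\lambda)^*\|_{\text{op}}=\|F(\lambda)\|_{\text{op}}$, and by products using submultiplicativity plus the fact that both factors are bounded on all of $\R$.

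Next I would verify the $C^*$-identity and completeness. For $F\in\F_n$ one has $\|F^*F\|=\sup_\lambda\|F(\lambda)^*F(\lambda)\|_{\text{op}}=\sup_\lambda\|F(\lambda)\|_{\text{op}}^2=\|F\|^2$, which together with submultiplicativity (also pointwise) gives the $C^*$-property. For completeness, a Cauchy sequence $(F_k)$ in $\F_n$ is uniformly Cauchy on $\R$, so $F_k(\lambda)$ converges in operator norm to some $F(\lambda)\in B(L^2(\R^n))$ for each $\lambda\in\R^*$ and $F_k(0)\to F(0)$ in $C^*(\R^{2n})$, with the convergence uniform in $\lambda$. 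Then $F(\lambda)$ is a uniform norm limit of compact operators for $\lambda\in\R^*$, hence compact, giving (1). Condition (2) holds since $C^*(\R^{2n})$ is norm-closed. Norm continuity of $\lambda\mapsto F(\lambda)$ on $\R^*$ follows because a uniform limit of norm-continuous maps is norm-continuous, giving (3). Finally (4) follows from the standard $\varepsilon/2$ argument: given $\varepsilon>0$, fix $k$ with $\|F-F_k\|<\varepsilon/2$, then choose $R$ such that $\|F_k(\lambda)\|_{\text{op}}<\varepsilon/2$ for $|\lambda|>R$, whence $\|F(\lambda)\|_{\text{op}}<\varepsilon$ for $|\lambda|>R$.

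The entire argument is routine; there is no serious obstacle. The only step requiring slight care is the product estimate in the preservation of norm continuity (condition (3)), which uses that operator-field elements are locally norm-bounded on $\R^*$, a fact that follows from continuity on $\R^*$ together with the vanishing-at-infinity condition (4).
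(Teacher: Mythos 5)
Your proof is correct and is exactly the routine verification the paper has in mind — the paper simply states ``the proof is straightforward'' and omits it, and your pointwise-operations/sup-norm argument with the closure and completeness checks is the intended content. The only cosmetic point is that the norm of $F(0)$ should be taken in $C^*(\R^{2n})$ (as you in fact do when proving completeness), so the norm is $\|F\|=\max\bigl(\sup_{\la\in\R^*}\|F(\la)\|_{\text{op}},\ \|F(0)\|_{C^*(\R^{2n})}\bigr)$, with the fields implicitly assumed uniformly bounded so that this supremum is finite.
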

\begin{proof} The proof is straight forward.
\end {proof}

\begin{proposition}\label{inject}
The Fourier transform  $F: C^*(H_n)\to \mathcal F_n$  is an injective
homomorphism.
\end{proposition}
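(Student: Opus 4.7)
The plan is to verify each of the three assertions in turn, leaning on Proposition~\ref{convinnorm} for the analytic content and on the Stone--von Neumann classification of $\widehat{H_n}$ for the algebraic content.

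First, to show $F(c)\in\F_n$ for every $c\in C^*(H_n)$: condition (2) is immediate, since $F(c)(0)=\rho(c)\in C^*(\R^{2n})$ by Definition~\ref{pi0}. Conditions (1), (3) and (4) are precisely the three statements of Proposition~\ref{convinnorm}, namely that $\pi_\la(c)$ is compact, the map $\la\mapsto\pi_\la(c)$ is norm continuous on $\R^*$, and $\|\pi_\la(c)\|_{\text{op}}\to 0$ as $|\la|\to\infty$. Thus $F$ takes values in $\F_n$.

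Second, $F$ is a $*$-homomorphism: the pointwise multiplication and involution on $\F_n$ are defined so that evaluation at each $\la\in\R$ is a $*$-homomorphism onto its image, and $F(c)(\la)=\pi_\la(c)$ (resp.\ $F(c)(0)=\rho(c)$) is the image of $c$ under the $*$-representation $\pi_\la$ (resp.\ $\rho$) of $C^*(H_n)$. Consequently, for $c,d\in C^*(H_n)$ one has $F(cd)(\la)=F(c)(\la)F(d)(\la)$ and $F(c^*)(\la)=F(c)(\la)^*$ for every $\la\in\R$.

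Third, for injectivity I would argue as follows. Suppose $F(c)=0$. Then $\pi_\la(c)=0$ for every $\la\in\R^*$ and $\rho(c)=0$. As noted in Definition~\ref{pi0}, $\rho$ is unitarily equivalent to the direct integral of the characters $\ch_{a,b}$, $(a,b)\in\R^n\times\R^n$, so $\rho(c)=0$ forces $\ch_{a,b}(c)=0$ for every $(a,b)\in\R^{2n}$. By the Stone--von Neumann theorem recalled in Section~\ref{unit}, the set $\{\pi_\la:\la\in\R^*\}\cup\{\ch_{a,b}:(a,b)\in\R^{2n}\}$ exhausts $\widehat{H_n}$. Hence $\pi(c)=0$ for every irreducible unitary representation $\pi$ of $H_n$, and therefore $\|c\|_{C^*(H_n)}=\sup_\pi\|\pi(c)\|_{\text{op}}=0$, i.e.\ $c=0$.

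The step requiring the most care is the last one, and the main point to remember there is that the $C^*$-norm on $C^*(H_n)$ is determined by the irreducible unitary representations alone; once this is granted, the exhaustiveness of our list of representations (which is classical) immediately gives injectivity. All remaining steps are routine verifications.
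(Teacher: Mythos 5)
Your proof is correct and follows essentially the same route as the paper: membership in $\F_n$ via Proposition~\ref{convinnorm}, homomorphism property because each evaluation $F(\cdot)(\la)$ is a representation, and injectivity because $\{\pi_\la\}\cup\{\ch_{a,b}\}$ exhausts $\widehat{H_n}$ so $F(c)=0$ kills $c$ in every irreducible representation. The paper states this last step in one line; you merely spell out the (correct) detail that $\rho(c)=0$ forces $\ch_{a,b}(c)=0$ for all $(a,b)$.
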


\begin{proof}
It is clear from the definition of $F$ and Proposition~\ref{convinnorm}
that $ F $ is  a homomorphism with values in $\F_n$. If $ F(c)=0 $, then for
each irreducible representation $ \pi $ of $ C^*(H_n) $, $\pi(c)=0
$. Hence $ c=0 $.
 \end{proof}

\begin{lemma}\label{xi}
Let $\xi\in \S(\R^{2n})$. Then, for any $ \la\in\R^*, $
\begin{equation}
\nonumber
\xi=\frac{1}{\lav^{n}}\int_{\R^n\times\R^n}\sp{\xi}{\et(\la,u)}\et(\la,u)du,
\end{equation}
where $ \et({\la,u}) $ is as  in Definition \ref{etalau},
the integral converging in $L^2(\R^n)$.
%The mapping
%$$L^2(\R^{2n})\to L^2(\R^n), \va\mapsto
%W_\la(\va):=\frac{1}{\lav^{n/2}}\int_{\R^{2n}} \va(u)
%\et(\la,u){du}$$

%is bounded by one and surjective. The mapping $$
%R_\la:\L2n\to\L2{2n}; \xi\mapsto
%\{R_\la(\xi)(u):=\frac{1}{\lav^{n/2}}\langle{\xi},{\et(\la,u)}\rangle\}
%$$ is isometric and $ W_\la\circ R_\la=\Id_{\L2n} $.
\end{lemma}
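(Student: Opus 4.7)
The plan is to recognize the right-hand side as a continuous Gabor (short-time Fourier) reproducing formula. Set $g_\la(s) := \lav^{n/4}\et(\lav^{1/2}s)$, so that $g_\la = \et_\la$ and $\|g_\la\|_2 = \|\et\|_2 = 1$. By (\ref{etalau}),
\begin{equation*}
\et(\la,a,b)(s) = e^{2\pi i a\cdot s}\, g_\la(s + b/\la).
\end{equation*}
First I would substitute $c := -b/\la$ in the $b$-integral; this yields $db = \lav^n\, dc$ and transforms $\et(\la,a,b)$ into $V_{a,c}(s) := e^{2\pi i a\cdot s}\, g_\la(s-c)$, a modulation and translation of a fixed window of unit $L^2$-norm. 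The prefactor $1/\lav^n$ exactly cancels the Jacobian, so the asserted identity reduces to the STFT resolution of identity
\begin{equation*}
\xi = \int_{\R^n\times\R^n}\sp{\xi}{V_{a,c}}\, V_{a,c}\, da\, dc.
\end{equation*}

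Next, I would prove this resolution of identity by performing the $a$-integral first. For each fixed $c$, the coefficient $\sp{\xi}{V_{a,c}}$ is the classical Fourier transform evaluated at $a$ of the function $h_c(s') := \xi(s')\,\ol{g_\la(s'-c)}$. Fourier inversion in $a$ then gives
\begin{equation*}
\int_{\R^n}\sp{\xi}{V_{a,c}}\, V_{a,c}(s)\, da = g_\la(s-c)\, h_c(s) = \val{g_\la(s-c)}^2\,\xi(s).
\end{equation*}
Integrating in $c$ and using $\int_{\R^n}\val{g_\la(s-c)}^2\, dc = \|g_\la\|_2^2 = 1$ produces precisely $\xi(s)$, as required.

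The only delicate point is the justification of the Fubini interchange and the mode of convergence. Since $\xi,\et\in \S(\R^n)$, the Gabor coefficient $(a,c)\mapsto\sp{\xi}{V_{a,c}}$ is itself Schwartz on $\R^{2n}$ (standard property of the STFT with a Schwartz window against a Schwartz signal), so the double integral is absolutely convergent, Fubini applies, and the identity holds pointwise as well as in $L^2(\R^n)$. This is the main, though entirely routine, technicality; no approximation or density argument is required at this stage.
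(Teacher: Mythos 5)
Your proof is correct and is essentially the paper's argument: after the cosmetic change of variables $c=-b/\la$ (whose Jacobian $\lav^{n}$ absorbs the prefactor), your computation --- Fourier inversion in the $a$-variable followed by integrating $\val{g_\la(s-c)}^2$ in the translation variable using $\no{\et}2=1$ --- is exactly the calculation in the paper, and your justification of convergence via the Schwartz decay of the coefficient $(a,c)\mapsto\sp{\xi}{V_{a,c}}$ matches the paper's remark that the integral converges in $\S(\R^n)$ and hence in $L^2(\R^n)$.
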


\begin{proof}

Let $ \xi\in \S(\R^n). $
%Since
%\begin{eqnarray}{}
%\nonumber \int_{\R^n\times\R^n}\val{\sp{\xi}{{\et(\la,a,b)}}}^2
%dadb
%    &=& \int_{\R^n\times\R^n}\val{\int_{\R^n}\xi(s)e^{{-2\pi i}   a\cdot
%s}\ol{\et(\la,a,b)}(s)ds}^2
%dadb \\
%\nonumber      &=&
%\int_{\R^n\times\R^n}\val{\xi(a){\et(\la,a,b)(a)}}^2da db \\
%\nonumber      && (\text{according to Plancherel's theorem} )\\
%\nonumber
%&=&\int_{\R^n\times\R^n}\val{\xi(a)\et_\la(a+\frac{b}{\la}
%)}^2db da\\
%\nonumber &=&\lav^n \int_{\R^n}\val{\xi(a)}^2da
%\int_{\R^n}\val{\et_\la(b)}^2db=\lav^n\no \xi 2^2,
%\end{eqnarray}
%we see that the mapping $ R_\la $ is an isometry for every $
%\la\in \R^* $. Let  $x\in\R^n$ and put
%$$\tilde
%\xi(x)=\int_{\R^n\times\R^n}\sp{\xi}{\et(\la,a,b)}\et(\la,a,b)(x)dadb.$$
 Then
\begin{eqnarray}{}
\nonumber &&
\int_{\R^n\times\R^n}\sp{\xi}{\et(\la,a,b)}\et(\la,a,b)(x)dadb\\
\nonumber &=& \int_{\R^n\times\R^n}\left(\int_{\R^n}\xi(s)e^{{-2\pi i}
a\cdot s}\ol{\et_\la(s+\frac{b}{\la} ))}ds\right)
 e^{2\pi i a\cdot x}{\et_\la(x+\frac{b}{\la} )} dadb  \\
\nonumber &&(\text{by Fourier's inversion formula} )  \\
\nonumber &=&
\int_{\R^n\times\R^n}\xi(x)\ol{\et_\la(x+\frac{b}{\la} )}
\et_\la(x+\frac{b}{\la} )db =\lav^{n} \xi(x)
\end{eqnarray}
giving
 $\xi=\frac{1}{\lav^n}
\int_{\R^n\times\R^n}\sp{\xi}{\et(\la,a,b)}\et(\la,a,b)da db$.

Furthermore, since $\xi$ is a Schwartz function, it follows that
the mapping
$$(a,b)\to \sp{\xi}{\et(\la,a,b)}=\lav^{n/4}
\int_{\R^n}\xi(s)e^{-2\pi i a\cdot s}\overline{\et(\lav^{1/2}(s+\frac{b}{\la} ))}ds $$
 is
also a Schwartz function in the variables $a,b$. Hence the
integral $\int_{\R^n\times\R^n}\sp{\xi}{\et(\la,a,b)} \et(\la,a,b)da
db$ converges in $\S(\R^n)$ and hence also in $L^2(\R^n)$.

\end{proof}

\begin{remark}\rm
By Lemma~\ref{xi},
$$\pi_\la(f)\xi=\frac{1}{|\la|^n}\int_{\R^{2n}}\pi_{\la}(f)\et(\la,u)
\langle\xi,\et(\la,u)\rangle
du=\frac{1}{|\la|^n}\int_{\R^{2n}}\pi_{\la}(f)\circ P_{\et(\la,u)}\xi du$$
for any $f\in C^*(H_n)$, where $P_{\et(\la,u)}$ is the orthogonal
projection onto the one dimensional subspace $\C\et(\la,u)$.

\end{remark}

\begin{definition}\label{projint}
\rm   For a vector $ 0\ne \et\in\L2n $, we let $ P_\et $ be the
orthogonal projection onto the one dimensional subspace $ \C \et
$.

  Define for $ \la\in\R^* $ and $ h\in C^*({\R^{2n}}) $ the
linear operator
\begin{eqnarray}\label{inthxi}
  \nu_\la(h) &:=&\int_{\R^{2n}} \hat h(u) P_{\et(\la,u)}\frac{du}{\lav^n}.
\end{eqnarray}

 \end{definition}
\begin{proposition}\label{intisbounded}$  $
\begin{enumerate}
\item For every $ \la\in\R^* $ and $ h\in \S({\R^{2n}}) $ the
integral (\ref{inthxi}) converges in operator norm.
%\item  We have
%the identity:
%\begin{eqnarray}
% \nn \nu_\la(h) &=&q_{\la} (h), h\in\S(\R^{2n}).
%\end{eqnarray}
\item   $ \nu_\la(h) $ is compact and
$\|\nu_\la(h)\|_{op}\leq\|h\|_{C^*({\mathbb R}^{2n})}$.
\item  The mapping $ \nu_\la: C^*(\R^{2n})\to \F_n $
is involutive, i.e. $ \nu_\la(h^*)=\nu_\la(h)^* , h\in C^*(\R^{2n})$, where by
$\nu_\la$ we denote also  the extension of $\nu_\la$ to $C^*(\R^{2n})$.
 \end{enumerate}

 \end{proposition}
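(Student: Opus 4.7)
The plan is to treat the three parts in order; part (2) is where the real content sits, so I would set up the frame structure coming from Lemma~\ref{xi} first and then harvest both the norm bound and (with little extra work) the other assertions.

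For (1), observe that $h\in\S(\R^{2n})$ gives $\hat h\in\S(\R^{2n})\subset L^1(\R^{2n})$, and every rank-one projection $P_{\et(\la,u)}$ has operator norm $1$. Hence the integrand $\hat h(u)P_{\et(\la,u)}/|\la|^n$ is Bochner integrable in the Banach space $B(L^2(\R^n))$ with
$$\int_{\R^{2n}}\bigl\|\hat h(u)P_{\et(\la,u)}/|\la|^n\bigr\|_{\rm op}\,du \;\le\; |\la|^{-n}\|\hat h\|_1<\infty,$$
so the integral converges absolutely in operator norm.

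For (2), compactness follows at once from (1): the defining integral is a norm-limit of finite linear combinations of rank-one operators, and $\mathcal K(L^2(\R^n))$ is norm-closed in $B(L^2(\R^n))$. For the bound, I use Lemma~\ref{xi} to introduce the analysis operator of the continuous frame $\{\et(\la,u)\}$. Define $V_\la:L^2(\R^n)\to L^2(\R^{2n})$ by
$$(V_\la\xi)(u)\;=\;|\la|^{-n/2}\,\langle\xi,\et(\la,u)\rangle.$$
By the polarized form of Lemma~\ref{xi}, $V_\la$ is an isometry; dually, $V_\la^*\varphi=|\la|^{-n/2}\int\varphi(u)\et(\la,u)\,du$. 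A direct computation (substituting these formulas and rearranging) gives
$$\nu_\la(h)\;=\;V_\la^*\,M_{\hat h}\,V_\la,$$
where $M_{\hat h}$ is multiplication by $\hat h$ on $L^2(\R^{2n})$. Since $V_\la$ is an isometry and $\|M_{\hat h}\|_{\rm op}=\|\hat h\|_\infty$, and since the Fourier transform identifies $C^*(\R^{2n})$ with $C_0(\R^{2n})$ so that $\|h\|_{C^*(\R^{2n})}=\|\hat h\|_\infty$, we obtain $\|\nu_\la(h)\|_{\rm op}\le\|h\|_{C^*(\R^{2n})}$. By density of $\S(\R^{2n})$ in $C^*(\R^{2n})$, $\nu_\la$ extends uniquely to $C^*(\R^{2n})$ with values in $\mathcal K(L^2(\R^n))$ and satisfying the same estimate.

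For (3), recall that the involution on $L^1(\R^{2n})\subset C^*(\R^{2n})$ is $h^*(x)=\overline{h(-x)}$, so $\widehat{h^*}(u)=\overline{\hat h(u)}$. Since each $P_{\et(\la,u)}$ is self-adjoint and the integral in (\ref{inthxi}) converges in operator norm (part~(1)), one may take adjoints under the integral to get $\nu_\la(h)^*=\nu_\la(h^*)$ for $h\in\S(\R^{2n})$; the identity persists on all of $C^*(\R^{2n})$ by continuity of $\nu_\la$ and of the involution. The only genuine step in the whole argument is spotting the factorization $\nu_\la(h)=V_\la^*M_{\hat h}V_\la$, which converts the \emph{a priori} delicate operator-norm bound into the trivial estimate for a multiplication operator; once this is in place the rest is bookkeeping.
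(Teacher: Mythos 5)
Your proof is correct, and for the heart of the proposition --- the estimate $\|\nu_\la(h)\|_{\rm op}\le\|h\|_{C^*(\R^{2n})}$ --- it takes a genuinely different route from the paper. Parts (1) and (3) coincide with the paper's argument (the Bochner bound $\|\hat h\|_1/|\la|^n$ using $\|P_{\et(\la,u)}\|_{\rm op}=1$, and $\widehat{h^*}=\overline{\hat h}$ together with self-adjointness of the projections, then density). For (2), the paper proceeds by an explicit computation: it derives the integral kernel of $\nu_\la(h)$ (formula (\ref{nula})), notes the kernel is Schwartz, hence Hilbert--Schmidt, which gives compactness, and then obtains the operator bound by a Cauchy--Schwarz estimate in the $b$-variable combined with the $L^2$-boundedness (with norm at most $\|\hat h\|_\infty$) of convolution in the first variable by $\hat h^2(-,b)$. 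You instead read Lemma~\ref{xi} as a resolution of the identity, so that the coherent-state transform $V_\la\xi(u)=|\la|^{-n/2}\langle\xi,\et(\la,u)\rangle$ is an isometry of $L^2(\R^n)$ into $L^2(\R^{2n})$, and you factor $\nu_\la(h)=V_\la^*M_{\hat h}V_\la$; the bound is then immediate from $\|M_{\hat h}\|_{\rm op}=\|\hat h\|_\infty=\|h\|_{C^*(\R^{2n})}$, with no Schwartz-specific kernel manipulation, and your compactness argument (a Bochner integral of a $\K$-valued integrable field lies in the norm-closed subspace $\K$) likewise replaces the paper's Hilbert--Schmidt estimate. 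What the paper's heavier computation buys is the explicit kernel formula (\ref{nula}) itself, which is reused later in the proof of Theorem~\ref{characterisation}, so it is not wasted effort there; what your factorization buys is a one-line, structurally transparent proof of the norm bound that extends verbatim to the closure. The only points worth making explicit are routine: $V_\la$ is defined and isometric on $\S(\R^n)$ by Lemma~\ref{xi} (pair the reproducing formula with $\xi$) and extends to $L^2(\R^n)$ by density, and the identity $\nu_\la(h)=V_\la^*M_{\hat h}V_\la$ is first verified weakly against vectors before invoking the norm-convergent integral from part (1).
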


\begin{proof}

Since $ \noop{P_{\et(\la,u)}}=\no{\et(\la,u)}2^2=1 $, we have that
\begin{eqnarray}
 \nn \noop{\nu_\la(h)}
&=&\noop{\int_{\R^{2n}}\hat
h(u)P_{\et(\la,u)}\frac{du}{\lav^n}}\leq \int_{\R^{2n}}\val {\hat
h(u)}\frac{du}{\lav^n}=\frac{\no {\hat h}1}{\lav^n}.
\end{eqnarray}
Hence the integral $ \int_{\R^{2n}}\hat
h(u)P_{\et(\la,u)}\frac{du}{\lav^n} $ converges in operator norm
for $ h\in\S(\R^{2n}) $.

We compute $ \nu_\la(h) $ applied to a Schwartz function $
\xi\in\S(\R^n) $:
\begin{eqnarray}\label{nula}
\nonumber {\nu_\la(h)\xi}(x)&=& \int_{\R^{2n}}\hat
h(u)\langle{\xi},{\et(\la,u)}\rangle \et(\la,u)(x)\frac{du}{\lav^n} \\
\nonumber &=&  \int_{\R^{2n}}\hat
h(u)\left(\int_{\R^n}{\xi(r)}{\ol\et_\la(r+\frac{b}{\la})}e^{{-2\pi i}   a
\cdot r}
dr\right)e^{{2\pi i a}\cdot x} \et_\la(x+\frac{b}{\la})\frac{da db}{\lav^n}\\
 &=&   \int_{\R^{n}}\hat h^2(-,b)\ast (\xi
\ol\et_{\la,b})(x)
\et_\la(x+\frac{b}{\la})\frac{db}{\lav^n}\\
\nn&&(\text{where } \et_{\la,b}(s):=\et_\la(s+\frac{b}{\la}),s\in\R^n)\\
\nonumber &=&  \int_{\R^{2n}} \int_{\R^n}\hat h^2(x-s,b) \xi(s)
{\ol\et_{\la}(s+\frac{b}{\la})}
\et_\la(x+\frac{b}{\la})\frac{db}{\lav^n}ds\\
\nn &=& \int_{\R^{2n}} \int_{\R^n}\hat h^2(x-s,|\la|^{1/2}b) \xi(s)
{\ol\et(|\la|^{1/2}s+{\rm sign}\la \cdot b)}
\et(|\la|^{1/2}x+{\rm sign}\la \cdot b)dbds
\end{eqnarray}

The kernel function $  h_\la(x,s) $ of $\nu_\la(h)$ is
in $ \S(\R^{2n}) $ if $ h\in\S(\R^{2n}) $. In particular  $
\nu_\la(h) $ is a compact operator and we have the following
estimate for the Hilbert-Schmidt norm, $\|\cdot\|_{H.S}$, of $ \nu_\la(h) $:
 \begin{eqnarray}
\nonumber \no{\nu_\la(h)}{H.S}^2&=&
\int_{\R^n\times\R^n}\val{\int_{\R^n}\hat
h^2(x-s,b)\ol\et_\la(s+\frac{b}{\la})\et_\la(x+\frac{b}{\la})
\frac{db}{\lav^n}}^2dsdx\\
\nonumber &\leq&  \int_{\R^{3n}}\val {\hat
h^2(x-s,\la(b-x))}^2\vert\et_\la(s-x+b)\vert^2dbdxds\\
\nonumber &=&  \int_{\R^{3n}}\val {\hat
h^2(x,\la(b+s))}^2\vert\et_\la( b)\vert^2dbdxds\\
\nn&=&\int_{\R^{2n}}\val {\hat h^2(x,\la s)}^2dxds<\iy.
\end{eqnarray}

 Let us show  that $
\noop{\nu_\la(h)}\leq\no{\hat h} {\iy}$. Indeed
\begin{eqnarray}
\nonumber \no{\nu_\la(h)\xi}2^2&=& \int_{\R^n}\val{\int_{\R^n}\hat
h^2(-,b)\ast (\xi \ol\et_{\la,b})(x)
\et_\la(x+\frac{b}{\la})\frac{db}{\lav^n}}^2dx \\
\nonumber &\leq&
\frac{1}{\lav^{2n}}\int_{\R^n}\int_{\R^n}\vert\hat h^2(-,b)\ast
(\xi \ol\et_{\la,b})(x)\vert^2
{db}dx\\
\nonumber &\leq&\frac{\no{\hat
h}\iy^2}{\lav^{n}}\int_{\R^n}\Vert\xi \et_{\la,b}\Vert_2^2
{db}\\
\nonumber &=&  \frac{\no{\hat
h}\iy^2}{\lav^{n}}\int_{\R^n}\int_{\R^n}\vert\xi(x)
\et_{\la}(x+\frac{b}{\la})\vert^2dx
{db}\\
\nonumber &=&  \no{\hat h}\iy^2\no\xi2^2.
\end{eqnarray}

Let $h\in S(\R^{2n})$. Then $\overline{\hat h}=\hat{h^*}$.
This gives
\begin{eqnarray*}
\nu_\la(h)^*=(\int_{\R^{2n}}\hat h(u)P_{\et(\la,u)}\frac{du}{|\la|^n})^*=
\int_{\R^{2n}}\overline{\hat h(u)}P_{\et(\la,u)}\frac{du}{|\la|^n}\\=
\int_{\R^{2n}}\hat {h^*}(u)P_{\et(\la,u)}\frac{du}{|\la|^n}=\nu_\la(h^*).
\end{eqnarray*}

 \end{proof}

\begin{theorem}\label{characterisation}
Let $a\in C^*(H_n)$ and let $A$ be the operator field $ A=F(a)$, i.
e.
$$ A(\la)={\pi_\la(a)},\la\in\R^*, A(0)=\rho(a)\in C^*(\R^{2n}). $$
Then $$\lim_{\la\to 0}\noop{A(\la)-\nu_\la(A(0))}=0.$$
\end{theorem}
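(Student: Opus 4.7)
First I would reduce to the case $a\in\S(H_n)$. Indeed, both $F:C^*(H_n)\to\F_n$ and $\nu_\la:C^*(\R^{2n})\to\F_n$ are contractive (Propositions~\ref{inject} and~\ref{intisbounded}), so the set of $a\in C^*(H_n)$ for which the stated limit holds is norm-closed, and $\S(H_n)$ is dense in $C^*(H_n)$.

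For $a\in\S(H_n)$, both $\pi_\la(a)$ and $\nu_\la(\rho(a))$ are Hilbert--Schmidt integral operators whose kernels can be read off from~(\ref{kernel}) and from the computation in the proof of Proposition~\ref{intisbounded} (using $\widehat{\rho(a)}(u)=\chi_u(a)$). In the variables $y=s-x$, $z=(s+x)/2$, with $\sigma=\mathrm{sign}\,\la$ and $\mu=|\la|^{1/2}$, these are
\begin{align*}
K^{\pi_\la(a)}(s,x)&=\hat a^{2,3}(y,-\la z,\la),\\
K^{\nu_\la(\rho(a))}(s,x)&=\int_{\R^n}\hat a^{2,3}(y,\sigma\mu b-\la z,0)\,\overline{\et(b-\mu y/2)}\,\et(b+\mu y/2)\,db.
\end{align*}
Setting $\omega(t):=\int_{\R^n}\overline{\et(b)}\et(b+t)\,db$, a Schwartz function with $\omega(0)=1$, I would split the kernel difference as $A+B+C$ with
\begin{align*}
A(y,z)&:=\hat a^{2,3}(y,-\la z,\la)-\hat a^{2,3}(y,-\la z,0),\\
B(y,z)&:=\hat a^{2,3}(y,-\la z,0)\,(1-\omega(\mu y)),\\
C(y,z)&:=\int_{\R^n}[\hat a^{2,3}(y,-\la z,0)-\hat a^{2,3}(y,\sigma\mu b-\la z,0)]\,\overline{\et(b-\mu y/2)}\,\et(b+\mu y/2)\,db.
\end{align*}
These three pieces isolate three independent small-$\la$ mechanisms: the jump of the third Fourier variable of $a$ from $\la$ to $0$, the partial overlap of neighbouring coherent states, and the $\mu$-scale dispersion of the second Fourier variable across the coherent-state density.

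To conclude, I would apply Schur's test. For $a\in\S(H_n)$ all derivatives of $\hat a^{2,3}$ are rapidly decreasing in $y$ uniformly in the other variables. Taylor expansion in the third argument gives $|A|\le C|\la|(1+|y|)^{-N}$; combining $|1-\omega(\mu y)|\le C\min(\mu|y|,1)$ with the decay of $\hat a^{2,3}$ in $y$ gives $\int|B(y,z)|\,dy\le C\mu$ uniformly in $z$; and Taylor expansion in the second argument of $\hat a^{2,3}$, combined with integrability of $|b|\,|\et(b-\mu y/2)||\et(b+\mu y/2)|$ in $b$, gives $\int|C(y,z)|\,dy\le C\mu$ uniformly in $z$. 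Hence $\sup_s\int|A+B+C|(s,x)\,dx=O(|\la|^{1/2})$ and symmetrically with $s,x$ swapped, so Schur's test yields $\noop{\pi_\la(a)-\nu_\la(\rho(a))}=O(|\la|^{1/2})\to 0$.

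The main obstacle is that, for $n\ge 2$, the Hilbert--Schmidt norms of $\pi_\la(a)$ and $\nu_\la(\rho(a))$ individually grow like $|\la|^{-n/2}$ as $\la\to 0$, so the proof cannot be reduced to an $L^2$-comparison of the kernels. Operator-norm convergence must be controlled directly via a pointwise/Schur estimate, and the three-term decomposition above is what makes each of the independent sources of discrepancy visibly integrable in $y$ with the right $\mu$ gain.
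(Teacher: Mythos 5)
Your proposal is correct and follows essentially the same route as the paper: after the same density reduction to $a\in\S(H_n)$, your three kernel pieces $A$, $B$, $C$ are exactly the paper's $u_\la$, $w_\la$, $v_\la$ (third-variable jump $\la\to 0$, coherent-state overlap defect, and second-variable dispersion), estimated by the same Taylor/mean-value bounds exploiting Schwartz decay in $y=s-x$. The only substantive difference is how you conclude: you bound the kernel difference pointwise and invoke Schur's test (getting an explicit $O(|\la|^{1/2})$ rate), whereas the paper applies the difference operator to a vector $\xi$ and controls $\|u_\la\|_2$, $\|v_\la\|_2$, $\|w_\la\|_2$ directly by Cauchy--Schwarz; both correctly avoid the naive Hilbert--Schmidt comparison, which degrades as $\la\to 0$.
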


\begin{proof}
Let $f\in \S(H_n)$, $\xi\in L^2(\R^n)$, $\eta\in {\cal S}(\R^n)$,
$\|\eta\|_2=1$. Then by (\ref{pila}) and (\ref{nula})
\begin{eqnarray*}
((\pi_\la(f)-\nu_\la(\rho(f))\xi)(x)&=&\int_{\R^n}\hat{f}^{2,3}(x-s,-\frac{\la}{2}
(x+s),\la)\xi(s)ds\\
&-&\int_{\R^n}\int_{\R^n}\hat{f}^{2,3}(x-s,b,0)\xi(s)\ol\eta_\la(s+\frac{b}{\la}
)\eta_{\la}(x+\frac{b}{\la})\frac{db}{|\la|^n}ds\\
&=&\int_{\R^n}\int_{\R^n}\hat{f}^{2,3}(x-s,-\frac{\la}{2}(x+s),
\la)\eta_\la(b)\bar{\eta}_\la(b)\xi(s)dbds\\
&-&\int_{\R^n}\int_{\R^n}\hat{f}^{2,3}(x-s,\la(b-x),
0)\xi(s)\ol\eta_\la(s-x+b)\eta_{\la}(b)dbds.
\end{eqnarray*}

Let
\begin{eqnarray*}
 u_{\la}(x,b)&=& \int_{\R^n}
\xi(s)\ol\et_\la(s-x+b) (\hat f^{2,3}(x-s,-\frac{\la}{2}(x+s), \la)
-\hat f^{2,3}(x-s,-\frac{\la}{2}(x+s),
0))ds,\\
v_\la(x,b)&=&\int_{\R^n}
\xi(s)\ol\et_\la(s-x+b)(\hat f^{2,3}(x-s,-\frac{\la}{2}(x+s), 0)-\hat
f^{2,3}(x-s,\la(b-x),0))ds
\end{eqnarray*}

and
$$w_\la(x)=\int_{\R^n}\int_{\R^n}\hat{f}^{2,3}(x-s,-\frac{\la}{2}(x+s),
\la)\xi(s)\eta_\la(b)(\ol\eta_\la(b)-\ol\eta_\la(s-x+b))dbds.$$

We have
\begin{equation}
((\pi_\la(f)-\nu_\la(\rho(f))\xi)(x)=\int_{\R^n}u_\la(x,b)\eta_\la(b)db+
\int_{\R^n}v_\la(x,b)\eta_\la(b)db+w_\la(x).
\end{equation}
Thus to prove $\noop{\pi_\la(f)-\nu_\la(\rho(f))}\to 0$ as $\la\to 0$ it is enough to
show that $\|u_\la\|_2\leq \delta_\la\|\xi\|_2$, $\|v_\la\|_2\leq
\omega_\la\|\xi\|_2$
and $\|w_\la\|_2\leq \epsilon_\la\|\xi\|_2$, where $\delta_\la$, $\omega_\la$,
$\epsilon_\la\to 0$ as $\la\to 0$.

We have
\begin{eqnarray}
 \nn \hat
 f^{2,3}(x-s,-\frac{\la}{2}(x+s)),
\la)-\hat f^{2,3}(x-s,-\frac{\la}{2}(x+s),0) =
 \la \int_0^1  \partial_3
\hat f^{2,3}(x-s,-\frac{\la}{2}(x+s)), t\la)dt
\end{eqnarray}
and
\begin{eqnarray}
 \nn \hat
f^{2,3}(x-s,-\frac{\la}{2}(x+s)), 0)-\hat f^{2,3}(x-s,\la(b-x),0)
  =\la(\frac{1}{2}(s
-x)-(s-x+b))\\
\nn  \times\int_0^1 \partial_2\hat f^{2,3}(x-s,\la(b-x) +t
(\la(\frac{1}{2}(s-x)-(s-x+b)), 0)dt.
\end{eqnarray}
Hence, since $ f\in\S(H_n) $, there exists a constant $ C>0 $ such
that
\begin{eqnarray}
 \nn  \val{f^{2,3}(x-s,-\frac{\la}{2}(x+s)),
\la)-\hat f^{2,3}(x-s,-\frac{\la}{2}(x+s),0) }
\leq\lav \frac{C}{
(1+\Vert{x-s}\Vert)^{2n+1} },
\end{eqnarray}

 and
\begin{eqnarray}
 \nn  \val{\hat
f^{2,3}(x-s,-\frac{\la}{2}(x+s)), 0)-\hat f^{2,3}(x-s,\la(b-x),0)
}
\end{eqnarray}
\begin{eqnarray}
 \nn  &\leq&\lav (\Vert{s-x+b}\Vert+\Vert{s
-x}\Vert)
\frac{C}{(1+\Vert{x-s}\Vert)^{4n+1}}
\end{eqnarray}
for all $\la\in\R^*$, $x,s\in\R^n$.
Therefore we see that
\begin{eqnarray}
\nonumber \no{u_\la}2^2&=& \int_{\R^n\times\R^n}\val{u_\la(x,b)}^2dxdb \\
\nn&\leq&\int_{\R^n\times\R^n}\left(\int_{\R^n} \vert
\xi(s)\et_\la(s-x+b)\vert\lav
\frac{C}{(1+\Vert{x-s}\Vert)^{2n+1}}ds\right)^2 dx
db\\
\nonumber &\leq& \lav^2 C' \int_{\R^{3n}}
\frac{\val{\xi(s)}^2}{(1+\Vert{x-s}\Vert)^2}\vert\et_\la(s-x+b)\vert^2
db dx ds
\\
\nonumber &\leq& C''\lav^2 \no\xi 2^2.
\end{eqnarray}
Similarly
\begin{eqnarray}
\nonumber \no{v_\la}2^2&=& \int_{\R^n\times\R^n}\val{v_\la(x,b)}^2dxdb \\
\nn&\leq&\int_{\R^n\times\R^n}\left(\int_{\R^n}\vert
\xi(s)\et_\la(s-x+b)\vert\lav (\Vert{s-x+b}\Vert+\Vert{s -x}\Vert)\right.\\
\nonumber&&\left.\frac{C}{(1+\Vert{x-s}\Vert)^{4n+1}}ds\right)^2dbdx\\
\nn&\leq&C'\int_{\R^{3n}}\vert \xi(s)\et_\la(s-x+b)\vert^2\lav^2
(\Vert{s-x+b}\Vert+\Vert{s -x}\Vert)^2\\
\nonumber&&
\frac{1}{(1+\Vert{x-s}\Vert)^{4n+1}}dsdbdx\\
\nn&\leq&2C'\int_{\R^{3n}}\vert \xi(s)\lav^{n/4}\et(\lav^{1/2}
(s-x+b))\vert^2\lav^2 \Vert{s-x+b}\Vert^2
\frac{dsdbdx}{(1+\Vert{x-s}\Vert)^{4n+1}}\\
\nn&+&2C'\int_{\R^{3n}}\vert \xi(s)\lav^{n/4}\et(\lav^{1/2}
(s-x+b))\vert^2\lav^2 \Vert{s -x}\Vert^2
\frac{dsdbdx}{(1+\Vert{x-s}\Vert)^{4n+1}}\\
\nn&\leq&2C'\lav\int_{\R^{3n}}\vert
\xi(s)\lav^{n/4}\tilde\et(\lav^{1/2} (s-x+b))\vert^2
\frac{dsdbdx}{(1+\Vert{x-s}\Vert)^{4n+1}}\\
\nn&+&2C'\lav^2\int_{\R^{3n}}\vert \xi(s)\lav^{n/4}\et(\lav^{1/2}
(s-x+b))\vert^2
\frac{dsdbdx}{(1+\Vert{x-s}\Vert)^{4n-1}}\\
\nonumber &\leq& C''\lav(\no{\tilde\et}2^2+\lav\no{\et}2^2)\no\xi
2^2,
\end{eqnarray}
for some constants $ C',C''>0 $, where the function $ \tilde\et $
is defined by $ \tilde \et(s):=\|s\|\et(s),\ s\in\R $.

Since $\eta\in{\cal S}(\R^n)$, we can use the same arguments to see that
\begin{eqnarray*}
\|w_\la\|_2^2&=&\int_{\R^n}|w_\la(x)|^2dx\\
&\leq&
\int_{\R^n}\Big(\int_{\R^n}\int_{\R^n}|\xi(s)|\eta_\la(b)||\la|^{n/4+1/2}
(\|s-x\|)\frac{C}{(1+\|x-s\|)^{4n+1}}dbds\Big)^2dx\\
&\leq&
C'|\la|^{n/2+1}\int_{\R^{3n}}|\xi(s)|^2|\eta_\la(b)|^2\frac{\|x-s\|^2dsdxdb}{
1+\|x-s\|^{4n+1}}\leq C''|\la|^{n/2+1}\|\xi\|_2^2\|\eta\|_2^2.
\end{eqnarray*}

We have proved therefore $\|\pi_\la(f)-\nu_\la(\rho(f))\|\to 0$ as
$\la\to 0$ for $f\in \S(H_n)$. Since $\S(H_n)$ is dense in
$C^*(H_n)$, the statement holds for any $a\in C^*(H_n)$.

\end{proof}

\begin{definition} \label{nuconcrete} For $\et\in \S(\R^n)$
we define the linear mapping $ \nu_\et:=\nu: C^*(\R^{2n})\to \F_n
$ by
\begin{equation}
 \nonumber \nu(h)(\la)=\nu_\la(h), \la\in\R^* \text{ and } \nu(h)(0)=h.
\end{equation}

\end{definition}
\begin{proposition}\label{nufhei}
The mapping $ \nu:C^*(\R^{2n})\to \F_n $ has the following
properties:
\begin{enumerate}
\item $ \|\nu\| =1 $. \item  For every $ h,h'\in
C^*(\R^{2n})$, we have that $$ \lim_{\la\to 0}\noop
{\nu_\la(h\cdot h')-\nu_\la(h)\circ \nu_\la(h')}=0 $$ and also
\begin{equation}
 \nonumber \lim_{\la\to 0}\noop{\nu_\la (h^*)-\nu_\la(h)^*}=0.
\end{equation}
\item  For $ (a,b)\in\R^{2n} $ and $ h\in C^*(\R^{2n}) $  we have
that
$$\lim_{\la\to
0}\no{\nu(h)(\la)\et(\la,a,b)-\hat h(a,b)\et(\la,a,b)}2=0.
$$
\item
$\lim_{\lambda\to 0}\|\nu(h)(\la)\|=\|\hat h\|_{\infty}.$
 \end{enumerate}

 \end{proposition}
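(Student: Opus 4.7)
The plan is to deduce all four items from results already proved in the paper, principally Theorem~\ref{characterisation}, Proposition~\ref{conforetlaab}, and Proposition~\ref{intisbounded}, together with the surjectivity $\rho(C^*(H_n))=C^*(\R^{2n})$ recorded in Definition~\ref{pi0}.

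Item (i) is immediate: Proposition~\ref{intisbounded} (after density extension from $\S(\R^{2n})$) supplies the upper bound $\|\nu_\la(h)\|_{\text{op}}\le\|\hat h\|_\infty=\|h\|_{C^*(\R^{2n})}$, while $\nu(h)(0)=h$ forces $\|\nu(h)\|_{\F_n}\ge\|h\|$. The $*$-assertion of (ii) is in fact an exact equality: the computation closing the proof of Proposition~\ref{intisbounded} already establishes $\nu_\la(h^*)=\nu_\la(h)^*$ on $\S(\R^{2n})$, and it extends to $C^*(\R^{2n})$ by continuity.

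For the multiplicative part of (ii), I would first prove it for $h=\rho(f)$, $h'=\rho(g)$ with $f,g\in\S(H_n)$. Since $\rho$ is a $*$-homomorphism, $h\cdot h'=\rho(f\ast g)$ and $\pi_\la(f\ast g)=\pi_\la(f)\pi_\la(g)$, which yields the telescoping
\[
\nu_\la(h\cdot h')-\nu_\la(h)\nu_\la(h')=\bigl[\nu_\la(\rho(f\ast g))-\pi_\la(f\ast g)\bigr]+\bigl[\pi_\la(f)-\nu_\la(h)\bigr]\pi_\la(g)+\nu_\la(h)\bigl[\pi_\la(g)-\nu_\la(h')\bigr].
\]
Theorem~\ref{characterisation} forces each bracketed factor to $0$ in operator norm, and the uniform boundedness of $\pi_\la(g)$ and $\nu_\la(h)$ kills the three products. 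The extension to arbitrary $h,h'\in C^*(\R^{2n})$ is a standard $\ve/3$ argument, using the density of $\rho(\S(H_n))$ in $C^*(\R^{2n})$ (from surjectivity of $\rho$) and the uniform bound $\|\nu_\la\|\le 1$ from (i).

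For (iii), fix $c\in C^*(H_n)$ and set $h:=\rho(c)$, so that $\hat h(a,b)=\widehat{\rho(c)}(a,b)=\chi_{a,b}(c)$. The triangle inequality
\[
\|\nu_\la(h)\et(\la,u)-\hat h(u)\et(\la,u)\|_2\le\|\nu_\la(h)-\pi_\la(c)\|_{\text{op}}+\|\pi_\la(c)\et(\la,u)-\chi_{a,b}(c)\et(\la,u)\|_2
\]
then makes Theorem~\ref{characterisation} and Proposition~\ref{conforetlaab} send the right-hand side to $0$; surjectivity of $\rho$ and density handle general $h\in C^*(\R^{2n})$. Finally (iv) follows at once from (i) and (iii): $\hat h\in C_0(\R^{2n})$ attains its supremum at some $u_0$, and (iii) gives $\|\nu_\la(h)\|_{\text{op}}\ge\|\nu_\la(h)\et(\la,u_0)\|_2\to|\hat h(u_0)|=\|\hat h\|_\infty$, which matches the upper bound from (i). The only mildly delicate point is arranging the density/extension step in (ii), but the uniform bound in (i) makes it routine.
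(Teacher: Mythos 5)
Your proposal is correct and follows essentially the same route as the paper: item (2) via the telescoping against $\pi_\la(f)$, $\pi_\la(f')$ for $f,f'\in\S(H_n)$ with $\rho(f)=h$, $\rho(f')=h'$ and Theorem~\ref{characterisation} plus a density argument, item (3) via Proposition~\ref{conforetlaab} together with Theorem~\ref{characterisation} after lifting $h$ to $c\in C^*(H_n)$, and item (4) by combining the bound from Proposition~\ref{intisbounded} with (3). Your only (harmless) deviation is observing that the involution statement already holds as an exact identity by Proposition~\ref{intisbounded}(3), where the paper reproves it approximately via $\pi_\la(f^*)$.
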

\begin{proof}
 (1) follows from Proposition \ref{intisbounded}.

To prove  (2) we take  for $ h,h'\in \S(\R^{2n}) $ two
elements $ f,f'\in \S(H_n) $, such that $ \rho(f)=h, \rho(f')=h' $. Then $
\rho(f\ast f')=h\cdot h' $ and
\begin{eqnarray}
 \nn \noop{\nu_\la(h\cdot h')-\nu_\la(h)\circ \nu_\la(h')} &\leq&
\noop{\nu_\la(h\cdot h')-\pi_\la(f\ast
f')}\\\nn&+&\noop{\nu_\la(h)\circ \nu_\la(h')-\pi_\la(f)\circ \pi_\la( f')}\\
\nn  &\leq&\noop{\nu_\la(h\cdot h')-\pi_\la(f\ast f')}\\&+&
\no {f'}{C^*(H_n)}\noop{\nu_\la(h)-\pi_\la(f)}\\
\nn &+&\no{h}{C^*(\R^{2n})}\noop{\nu_\la(h')-\pi_\la(f')}.
\end{eqnarray}
Hence, by Theorem \ref{characterisation}, $ \lim_{\la\to
0}\noop{\nu_\la(f\ast f')-\nu_\la(f)\circ \nu_\la(f')}=0 $.
Furthermore
\begin{eqnarray}
\nonumber \noop{\nu_\la (h^*)-\nu_\la(h)^*}&\leq& \noop{\nu_\la
(h^*)-\pi_\la(f^*)}+\noop{\nu_\la (h)^*-\pi_\la(f)^*}\to 0\\
\nonumber &&  \text{ as }\la\to 0.
\end{eqnarray}

We conclude by the usual approximation argument.

For assertion (3), using Propositions \ref{conforetlaab} and Theorem
\ref{characterisation}, it suffices  to take  for $ h\in C^*(\R^{2n}) $ an
element $ c\in C^*(H_n) $, for which $ \rho(c)=h $.

The last statement follows from Proposition~\ref{intisbounded} and assertion
(3).
 \end{proof}

\begin{definition}
Let $
D_\nu(H_n) $ be the subspace of the algebra $ \F_n $, consisting of
all the fields $ (F(\la))_{\la\in\R}\in \F_n $, such that
\begin{eqnarray}
 \nn \lim_{\la\to 0}\noop{F(\la)-\nu_\la(F(0))} &=&0.
\end{eqnarray}
\end{definition}

Our main theorem of this section is the following characterisation of $C^*(H_n)$.

\begin{theorem}\label{finalres}
The Heisenberg $C^*$-algebra $C^*(H_n)$ is isomorphic to  $ D_{\nu}(H_n) $.
\end{theorem}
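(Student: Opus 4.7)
The strategy is to exhibit both $C^*(H_n)$ and $D_\nu(H_n)$ as extensions of $C^*(\R^{2n})$ by $C_0(\R^*,\K)$, and to show that $F$ is a morphism of these extensions which induces the identity on the quotient and an isomorphism on the ideal. The Five Lemma will then force $F$ to be an isomorphism.

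The first task is to check that $D_\nu(H_n)$ is a $C^*$-subalgebra of $\F_n$. Norm-closedness follows from the uniform bound $\noop{\nu_\la(h)}\leq \no{h}{C^*(\R^{2n})}$ of Proposition~\ref{intisbounded}: if $G_k\in D_\nu(H_n)$ converge in norm to $G\in\F_n$, then a $3\varepsilon$-argument yields $\noop{G(\la)-\nu_\la(G(0))}\to 0$ as $\la\to 0$. Closure under products and involution reduces to the two asymptotic identities of Proposition~\ref{nufhei}(2). Combined with Theorem~\ref{characterisation} and Proposition~\ref{inject}, the map $F:C^*(H_n)\to D_\nu(H_n)$ is then a well-defined injective $*$-homomorphism, hence isometric with closed image.

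Next, set $J_0:=\{G\in D_\nu(H_n): G(0)=0\}$. Since $\nu_\la(0)=0$, every $G\in J_0$ satisfies $\noop{G(\la)}\to 0$ as $\la\to 0$, and together with the defining conditions of $\F_n$ this identifies $J_0$ with $C_0(\R^*,\K)$. The field $\nu(h)$ from Definition~\ref{nuconcrete} is a contractive linear cross section of the evaluation $\text{ev}_0:D_\nu(H_n)\to C^*(\R^{2n})$, so $\text{ev}_0$ is surjective and $D_\nu(H_n)/J_0\cong C^*(\R^{2n})$. On the other side, $\rho:C^*(H_n)\to C^*(\R^{2n})$ is surjective with kernel $I:=\text{ker}\,\rho$ isomorphic to $C_0(\R^*,\K)$ via $c\mapsto (\pi_\la(c))_{\la\in\R^*}$, as recalled in the Introduction. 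Because $F(c)(0)=\rho(c)$, the map $F$ intertwines the two quotient maps and induces the identity on $C^*(\R^{2n})$.

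The essential step, and the main obstacle, is to show that the restriction $F|_{I}:I\to J_0$ is surjective. For each $\la\in\R^*$, $\pi_\la$ is irreducible with $\pi_\la(C^*(H_n))\supset \K(L^2(\R^n))$, a consequence of the Hilbert--Schmidt formula~(\ref{hilbsch}) for Schwartz data, which already yields a norm-dense set of kernel operators. Given $G\in J_0$, one approximates $G$ in norm by finite sums $\sum_j \varphi_j\otimes T_j$ with $\varphi_j\in C_c(\R^*)$ and $T_j$ kernel operators with Schwartz kernel on $L^2(\R^n)$. Using the explicit kernel formula~(\ref{kernel}), each summand $\la\mapsto\varphi_j(\la)T_j$ is realised as $\pi_\la(f_j)$ for a suitable $f_j\in\S(H_n)$ whose partial Fourier transform $\hat f_j^{\,2,3}(\,\cdot\,,\,\cdot\,,\la)$ is prescribed to match the kernel of $\varphi_j(\la)T_j$; since $\varphi_j$ vanishes near $0$, one may arrange $\hat f_j^{\,2,3}(\,\cdot\,,\,\cdot\,,0)=0$, ensuring $f_j\in I$. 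Passing to the limit gives $F(I)=J_0$, and the Five Lemma concludes the proof.
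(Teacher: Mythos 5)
Your argument is correct, but the decisive step is carried out along a genuinely different route from the paper. The paper, after establishing (as you do) that $D_\nu(H_n)$ is a $C^*$-subalgebra of $\F_n$ and an extension of $C_0(\R^*,\K)$ by $C^*(\R^{2n})$, identifies the spectrum of $D_\nu(H_n)$ with $\widehat{H_n}$ \emph{as a topological space} — this is where Proposition~\ref{nufhei}(3),(4) and the equality $\lim_{\la\to 0}\|\nu_\la(h)\|_{\mathrm{op}}=\|\hat h\|_\infty$ are used — and then concludes that $F(C^*(H_n))=D_\nu(H_n)$ from the Stone--Weierstrass theorem for type~I $C^*$-algebras (Dixmier). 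You instead prove surjectivity directly at the level of the ideal: the explicit kernel formula (\ref{kernel}) lets you solve $\hat f^{2,3}(s-x,-\tfrac{\la}{2}(s+x),\la)=\varphi(\la)K(s,x)$ by an invertible change of variables (legitimate because $\varphi$ is supported away from $\la=0$), producing $f\in\S(H_n)\cap\ker\rho$ with $F(f)=\varphi\otimes T$; since $F$ is isometric with closed range and such elementary tensors span a dense subspace of $C_0(\R^*,\K)$, you get $F(\ker\rho)=J_0$, and the Five Lemma (or a direct diagram chase, using surjectivity of $\rho$) finishes the proof. This buys you an elementary, constructive argument that avoids the comparison of dual-space topologies and the type-I/Stone--Weierstrass machinery entirely; the paper's spectral route, on the other hand, is the one that scales to the thread-like groups $G_N$ later in the paper, where an explicit surjectivity argument onto the ideal would be much harder. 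Two small points to tighten: take $\varphi_j\in C_c^\infty(\R^*)$ (not merely $C_c$) so that the inverse partial Fourier transform really lands in $\S(H_n)$ — harmless, by density; and your claim that $\nu(h)$ is a cross section of $\mathrm{ev}_0$ tacitly requires $\nu(h)\in\F_n$ (norm continuity in $\la$ and decay at $\infty$), which deserves a word — or can be bypassed altogether, since surjectivity of $\mathrm{ev}_0$ already follows from $\mathrm{ev}_0\circ F=\rho$ and the surjectivity of $\rho$.
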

\begin{proof}
First we show that  $ D_\nu(H_n) $ is a $ * $-subalgebra of $\F_n$. Indeed if $
F,F'\in D_\nu(H_n) $, then
\begin{eqnarray}
\nonumber \noop{\nu_\la(F(0)+F'(0))-\pi_\la(F+F')}&\leq&
\noop{\nu_\la(F(0))-\pi_\la(F)}+\noop{\nu_\la(F'(0))-\pi_\la(F')}\to 0 \\
\nonumber &&  \text{ as } \la\to 0.
\end{eqnarray}
and since $ \lim_{\la\to0}\noop{\nu_\la(F\cdot
F'(0))-\nu_\la(F(0))\circ\nu(F'(0))}=0 $ it follows that
\begin{eqnarray}
\nonumber \noop{\nu_\la(F(0)\cdot F'(0))-\pi_\la(F\cdot F')}&\to&
0 .
\end{eqnarray}
Proposition \ref{nufhei} tells us  that $ D_\nu(H_n) $ is also
invariant under the involution $ * $.
%Thus $ D_\nu(H_n) $ is  a $
%* $-subalgebra of $ \F_n $.

In order to see that $ D_\nu(H_n) $ is closed, let $ F \in \F_n$ be
contained in the closure of $ D_\nu(H_n) $. Let $ \ve>0 $. Choose $
F'\in D_\nu(H_n) $, such that $ \no{F-F'}{\F_n}<\ve $. In particular,
$ \no{F(0)-F'(0)}{C^*(\R^2)}<\ve $. Thus there exists $ \la_0>0  $, such
that
\begin{equation}
 \nonumber \noop{\pi_\la(F')-\nu_\la(F'(0))}<\ve
\end{equation}
 for all $ \lav<\val{\la_0} $,
whence
\begin{eqnarray}
\nonumber \noop{\pi_\la(F)-\nu_\la(F(0))}&=&
\noop{\pi_\la(F)-\pi_\la(F') +
\pi_\la(F')-\nu_\la(F'(0))+\nu_\la(F'(0))-\nu_\la(F(0))} \\
\nonumber &\leq &  3\ve, \text{ for }\lav<\val{\la_0}.
\end{eqnarray}
Hence $ D_\nu(H_n) $ is a $ C^* $-subalgebra of $ \F_n $.

Let $ I_0 :=\{F\in \F_n, F(0)=0\}$ and let  $ I_{00}=\{F\in I_0;
\lim_{\la\to 0}\no {F(\la)}{\text{op}}=0\} $. Then $ I_0 $ and $
I_{00} $ are closed two sided ideals of $ \F_n $ and it follows from
the definition of $ \F_n $ that $ I_{00} $ is just the algebra $
C_0(\R^*,\K) $. It is clear that $ D_\nu (H_n)\cap I_0=I_{00}$. But
$ D_\nu (H_n)\cap I_0 $ is the kernel in $ D_\nu(H_n) $ of the
homomorphism $ \de_0: \F_n\to \C^*(\R^{2n}); F\mapsto F(0) $.

 Since $\im\nu\subset D_\nu(H_n)$, the canonical projection $D_\nu(H_n)\to C^*(\R^{2n}): F\mapsto
F(0) $ is surjective  and
has  the ideal $ I_{00} $ as its kernel. Thus $
D_\nu(H_n)/I_{00}=C^*(\R^{2n}) $ and  therefore $ D_\nu(H_n) $ is an
extension of $ I_{00} $ by $ C^*(\R^{2n}) $. Moreover,
%It follows from the
%definition of $ D_\nu(H_n) $, that it contains the closed subspace
%$ \im \nu $ and hence
\begin{eqnarray}\label{idnu}
 \nn  D_\nu(H_n)&=&I_{00}+\im \nu.
\end{eqnarray}

 Since
for every irreducible representation $ \pi $ of $ D_\nu(H_n) $, we
have either  $ \pi(I_{00})\ne0 $, and then $ \pi=\pi_\la $ for
some $ \la\in\R^* $ or $ \pi=0 $ on $ I_{00} $ and then $ \pi $ must
be a character of $ C^*(\R^{2n}) $. Hence $ \hat D_\nu(H_n)=\hat
H_n $ as sets.
That  topologies of these spaces
agree follows from the equality
\begin{equation}
 \nonumber \lim_{\la\to 0}\no{\tau(h)(\la)}{\rm op}=\no{\hat h}{\iy},\  \forall
h\in C^*(\R^{2n}),
\end{equation}
which is due to  Proposition~\ref{nufhei}.

By Theorem \ref{characterisation},  $ F(\S(H_n)) \subset D_\nu(H_n) $.
 Hence the $C^*$- algebra $
C^*(H_n) $ can be  injected into $ D_\nu(H_n) $.

Since $ D_\nu(H_n) $ is a type I algebra and the dual spaces of $
D_\nu(H_n) $ and of $ C^*(H_n) $ are the same, we have that $ F(C^*(H_n)) $ is
equal  to $
D_\nu(H_n) $ by the Stone -Weierstrass theorem (see \cite{Di}).
 \end{proof}

 \begin{remark} \rm
 Another characterisation of the $C^*$-algebra $C^*(H_n)$ is given (without proof) in a short paper  by Gorbachev \cite{gorbachev}. For $n=1$ and
 $\lambda\in\R^*$ he defines an operator-valued measure  $\mu_\la$ on $\R^2$ given on the product of two intervals $[s,t]\times[e,d]$ by
 $\mu_{\la}([s,t]\times[e,d])=P^{\frac{e}{\la},\frac{d}{\la}}FP^{s,t}F^{-1}$, where $P^{s,t}$ is the multiplication operator by the characteristic function of $[t,s]$ on  $L^2(\R)$ and $F$ is the Fourier transform  on $L^2(\R)$.
 For $f\in C_0(\R^2)$, $\la\in\R^*$ let
 $$y(f)(\la)=\int_{\R^2} f(a,b)d\mu_\la(a,b)$$
 and $y(f)(0)=f$.
  Gorbachev states that $C^*(H_1)$ is isomorphic to the $C^*$-algebra of operator fields
 $B=\{B(\la)=y(f)(\la)+a, \la\in\R^*, B(0)=f, f\in C_0(\R^2), a\in C_0(\R^*,\K)\}$.

 \end{remark}

\subsection{Almost homomorphisms and  Heisenberg property}

\begin{definition}
\rm   \rm   A  bounded mapping $ \tau:C^*(\R^{2n})\to \F_n  $ is
called an \textit{almost homomorphism } if
 \begin{eqnarray*}
 &&\lim_{\la\to 0}\noop{\tau_\la(\alpha h+\beta
f)-\alpha\tau_\la(h)-\beta\tau_\la(f)}= 0,\\
 &&\lim_{\la\to 0}\noop
{\tau_\la(h\cdot h')-\tau_\la(h)\circ \tau_\la(h')}=0, \\
&&\lim_{\la\to 0}\noop{\tau_\la (h^*)-\tau_\la(h)^*}=0, \ \al,\be\in \C, f,h\in
C^*(\R^{2n}).
\end{eqnarray*}

\end{definition}

 The mapping $\nu$ from the previous section is an example of such almost homomorphism.

 Let $\tau$ be an arbitrary almost homomorphism such that $\tau(f)(0)=f$ for any $f\in C^*(\R^{2n})$. We define as before  $
D_\tau(H_n) $ to be the subspace of the algebra $ \F_n $, consisting of
all the fields $F= (F(\la))_{\la\in\R}\in \F_n $, such that
\begin{eqnarray}
 \nn \lim_{\la\to 0}\noop{F(\la)-\tau_\la(F(0))} &=&0.
\end{eqnarray}

Using the same arguments as the one in the proof of Theorem~\ref{finalres} one can easily prove the following

\begin{proposition}\label{cheisen}
  The subspace $ D_\tau(H_n) $  of the $ C^* $-algebra $ \F_n $ is itself a $
C^* $-algebra. The algebra $ D_\tau(H_n) $ is an extension of $
C_0(\R^*,\K) $ by $ C^*(\R^{2n}) $, i.e., $C_0(\R^*,\K) $ is a
closed $*$-ideal in $ D_\tau(H_n) $ such that $D_\tau(H_n)
/C_0(\R^*,\K) $ is isomorphic to $ C^*(\R^{2n}) $.

 \end{proposition}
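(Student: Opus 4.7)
The plan is to mimic the proof of Theorem~\ref{finalres}, replacing $\nu$ by $\tau$ throughout and using only those properties of $\tau$ that are encoded in the definition of an almost homomorphism together with the normalization $\tau(f)(0)=f$. The argument splits naturally into four steps: (i) $D_\tau(H_n)$ is a $*$-subalgebra of $\F_n$, (ii) $D_\tau(H_n)$ is norm-closed in $\F_n$, (iii) $I_{00}:=\{F\in\F_n: F(0)=0,\ \lim_{\la\to 0}\noop{F(\la)}=0\}\cong C_0(\R^*,\K)$ sits inside $D_\tau(H_n)$ as a closed two-sided $*$-ideal, and (iv) the evaluation $\de_0:D_\tau(H_n)\to C^*(\R^{2n})$, $F\mapsto F(0)$, is a surjective $*$-homomorphism with kernel $I_{00}$.

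For step (i), if $F,F'\in D_\tau(H_n)$ then linearity of the limit and the almost-linearity of $\tau$ give $\lim_\la\noop{(F+F')(\la)-\tau_\la(F(0)+F'(0))}=0$. For the product, write
\begin{eqnarray*}
\noop{(F\cdot F')(\la)-\tau_\la(F(0)F'(0))}&\le&\noop{F(\la)F'(\la)-\tau_\la(F(0))F'(\la)}\\
&+&\noop{\tau_\la(F(0))F'(\la)-\tau_\la(F(0))\tau_\la(F'(0))}\\
&+&\noop{\tau_\la(F(0))\tau_\la(F'(0))-\tau_\la(F(0)F'(0))},
\end{eqnarray*}
and observe that the three summands tend to $0$ by the definition of $D_\tau(H_n)$ together with boundedness of $F'(\la)$ and $\tau_\la(F(0))$, and by the multiplicativity-in-the-limit property of $\tau$. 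The involutive condition is handled analogously using $\lim_\la\noop{\tau_\la(h^*)-\tau_\la(h)^*}=0$.

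For step (ii), closedness follows from a standard $3\ve$-argument: given $F$ in the closure of $D_\tau(H_n)$ and $\ve>0$, pick $F'\in D_\tau(H_n)$ with $\|F-F'\|_{\F_n}<\ve$; since $\tau$ is bounded, $\noop{\tau_\la(F(0))-\tau_\la(F'(0))}\le\|\tau\|\ve$, and together with $\noop{F(\la)-F'(\la)}<\ve$ and $\noop{F'(\la)-\tau_\la(F'(0))}<\ve$ for $|\la|$ small we get $\noop{F(\la)-\tau_\la(F(0))}\le(2+\|\tau\|)\ve$. For step (iii), $I_{00}$ is trivially contained in $D_\tau(H_n)$ because both sides of the defining limit vanish when $F(0)=0$ and $\|F(\la)\|_{\rm op}\to 0$; that $I_{00}\cong C_0(\R^*,\K)$ is immediate from the definition of $\F_n$, and $I_{00}$ is a $*$-ideal in the larger algebra $\F_n$, hence in $D_\tau(H_n)$.

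Finally, step (iv) is the payoff: $\de_0$ is a $*$-homomorphism with kernel $D_\tau(H_n)\cap\{F:F(0)=0\}=I_{00}$, the second equality holding because whenever $F\in D_\tau(H_n)$ satisfies $F(0)=0$ we have $\tau_\la(F(0))=\tau_\la(0)=0$ and so $\lim_\la\noop{F(\la)}=0$. Surjectivity of $\de_0$ follows from the fact that $\tau$ maps into $D_\tau(H_n)$: for any $h\in C^*(\R^{2n})$ the field $\tau(h)$ lies in $\F_n$ (by hypothesis) and trivially satisfies the defining condition $\lim_\la\noop{\tau(h)(\la)-\tau_\la(h)}=0$, while $\de_0(\tau(h))=\tau(h)(0)=h$ by the normalization assumption. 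Thus $D_\tau(H_n)/I_{00}\cong C^*(\R^{2n})$, which is exactly the claimed extension. The main obstacle---and the only place where some care is needed---is step (i), specifically verifying that the cross-terms in the product estimate can indeed be controlled uniformly in $\la$, which requires that the limit defining $D_\tau(H_n)$ be robust under multiplication by uniformly bounded operator fields; this is automatic once one knows $\sup_\la\noop{\tau_\la(h)}<\infty$ from the boundedness of $\tau$.
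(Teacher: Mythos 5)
Your proposal is correct and follows essentially the same route as the paper, whose proof of this proposition is literally ``the same arguments as in Theorem~\ref{finalres}'': your four steps ($*$-subalgebra via the almost-homomorphism limits, the $3\ve$-closedness argument, identification of $I_{00}\cong C_0(\R^*,\K)$ as the kernel of evaluation at $0$, and surjectivity of $F\mapsto F(0)$ because $\tau(h)$ itself lies in $D_\tau(H_n)$ with $\tau(h)(0)=h$) are exactly the steps of that proof with $\nu$ replaced by $\tau$. The only implicit point, which the paper shares (it uses $\|\nu\|=1$ and linearity of $\nu_\la$ in the closedness step), is that controlling $\noop{\tau_\la(F(0))-\tau_\la(F'(0))}$ by $\|\tau\|\,\|F(0)-F'(0)\|$ tacitly reads ``bounded mapping'' as bounded \emph{linear} mapping.
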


 \begin{definition}\label{heisenbergtype}
\rm We say that an almost homomorphism  $ \tau:C^*(\R^{2n})\to \F_n $ has
 the \textit{Heisenberg} property, if the $ C^* $-algebra  $
D_\tau(H_n) $ is isomorphic  to
 $ C^*(H_n) $.
 \end{definition}

\begin{remark}\label{wekeqhei}
\rm
As for the mapping $\nu$ we have that the dual spaces of $D_\tau(H_n)$ and of $H_n$ coincides as sets.
The necessary and the sufficient conditions for them to coincide as topological spaces is
$$\lim_{\la}\noop{\tau_\la(h)}=\|\hat h\|_{\infty},\quad h\in C^*(\R^n).$$
% Let $ \tau:\C^*(\R^{2n})\to \F_n $ be a contraction. Then $
%\nu $ has the Heisenberg property, if and only if for every $ h\in
%C^*(\R^{2n}) $
%$$
%\lim_{\la\to 0}\no{\tau(h)({\la})-\nu_\et(h)(\la))}{\rm op}=0 .$$

 \end{remark}

\begin{remark}\rm
Using the notion of Busby invariant for a $C^*$-algebra extension and the pullback algebra (\cite{wegge-olsen}), one can show that any
extension $\B\subset\F_n$ of $C_0(\R^*,\K)$ by $C^*(\R^{2n})$ is isomorphic to $D_\tau (H_n)$ for some almost homomorphism $\tau$.
The Busby invariant of such extension is $b: C^*(\R^{2n})\to C_b(\R^*,B(H))/C_0(\R^*,\K)$, $b(h)=\tau(h)+C_0(\R^*,\K)$.
\end{remark}

\textbf{Question}.

What mappings $ \tau $ give us $ C^* $-algebras $ D_\tau(H_n) $,
which are isomorphic to $ C^*(H_n) $?

%Can we realize every $ C^* $-algebra which is isomorphic to $
%C^*(H_n) $ as a subalgebra of $ \F_n $ of the form $ D_\nu(H_n) $
%for some almost homomorphism $ \nu $?

\vspace{0.2cm}

Using a procedure described in \cite{delaroche} one can construct families of
$C^*$-algebras of type $D_\tau(H_n)$ which are isomorphic to $D_\nu (H_n)$ and therefore to $C^*(H_n)$.

Next example shows that there is no topological obstacle for a $C^*$-algebra of type $D_\tau(H_n)$ to be non-isomorphic to $C^*(H_n)$.
Namely, there is a $C^*$-algebras  $D_{\tau}(H_n)$ with the spectrum equal to $\widehat H_n$ and such that $D_\tau(H_n)\not\simeq C^*(H_n)$.

 We recall first that if ${\mathcal A}$, ${\mathcal C}$ are
$C^*$-algebras, then an extension of $\mathcal{C}$ by $\A$ is a
short exact sequence
\begin{equation}\label{seq}
0\rightarrow {\mathcal A}\stackrel{\alpha}\rightarrow {\mathcal B}
\stackrel{\beta}\rightarrow{\mathcal C}\rightarrow 0
\end{equation}
of $C^*$-algebras.
One says that the exact sequence splits if there is a cross-section
$*$-homomorphism $s: \mathcal C\to \B$ such that $\beta\circ s=I_{\mathcal C}$.

It is known that the extension
\begin{equation}\label{heisenseq}
0\rightarrow C_0(\R^*,\K)\rightarrow C^*(H_n)\rightarrow
C^*(\R^{2n})\rightarrow 0
\end{equation}
does not split (see \cite{rosenberg} and references therein) while
there exists a large number of splitting extensions $\B$ and
therefore non-isomorphic to $C^*(H_n)$ such that $\hat \B=\hat{H_n}$
(see \cite[VII.3.4]{delaroche}).  Here is a concrete example
inspired by \cite{delaroche}.

\begin{example}\label{example_del}\rm
Let $\{\xi_{Z}\}_{Z\in{\mathbb Z}^{2n}}$ be an orthonormal  basis of the
Hilbert space $L^2(\R^n)$. Let $P_{Z}, Z\in\Z^{2n},$ be the
orthogonal projection onto the one-dimensional $\C \xi_Z$.
  We define a homomorphism $\nu$ from $C^*({\mathbb R}^{2n})$ to $\F_n$ by
$$\nu(\va)(\lambda):=\sum _{Z\in{\mathbb Z}^{2n}}\hat \va(\val
\la^{1/2}Z)P_{Z},\la\in\R^*, \nu(\va)(0):=\va,\ \va\in C^*({\R^{2n}}).$$
We note that since for each $\la\ne 0$ and each compact subset $K\subset
\R^{2n}$,
the set $\{Z\in\Z^{2n}:\val\la^{1/2}\in K\}$ is finite and since $\hat\va\in
C_0(\R^{2n})$, one can
easily see that $\nu(\va)(\la)$ is compact.
Moreover
\begin{equation}
\nonumber \noop{\nu(\va)(\lambda)}=
\sup_{  Z\in \Z^{2n}}\val
{\va(\val \la^{1/2} Z)}.
\end{equation}

Since we can find for   every vector $ u\in\R^{2n} $ and $ \la\in\R^* $ a
vector $ Z_\la\in \Z^{2n} $, such that $ \lim_{\la\to 0} \lav^{1/2} Z_\la=u $,
we
see that
\begin{eqnarray}\label{slitdelaro}
 \lim_{\la\to 0}
\noop{\nu_\la(\va)}=\no{\hat\va}\iy=\no
\va{\C^*(\R^{2n})} .
\end{eqnarray}

\end{example}

\section{The $C^*$-algebra of the thread-like Lie groups  $ G_N$ }\label{fad}

 For $N \ge {3}$, let ${\frak g}_N$ be the
$N$-dimensional real nilpotent Lie algebra with basis
$X_{1},\ldots,X_N$ and non-trivial Lie brackets
$$
[X_N,X_{N-{1}}] =X_{N-{2}}, \ldots,[X_N,X_{2}] =X_{1}.
$$
The Lie algebra ${\frak g}_N$ is $(N-{1})$-step nilpotent and is a semi-direct
product of $\Re X_N$ with the abelian ideal
\begin{equation}\label{idealing}\b:=\sum_{j={1}}^{N-{1}} \R
X_j.\end{equation} Let
  $$\b_j:=\text{span}\{X_i,i=1,\cdots,j\}, 1\leq j\leq N-1.$$

Note that ${\frak g}_{3}$ is the three dimensional Heisenberg Lie algebra. Let
$G_N:=\exp{\gg_N}$ be the associated connected, simply connected
Lie group. Let also $B_j:=\exp{\b_j}$ and $B:=\exp{\b}$.
 Then for $3\leq M\leq N$ we have $ G_M\simeq G_N/B_{N-M}$.

\subsection{The unitary dual of $G_N$}
In this section we describe the unitary irreducible representations of $G_N$ up
to a unitary equivalence.

For $\xi =\sum_{j={1}}^{N-{1}} \xi_j X_j^* \in {\frak g}_N^*$, the
coadjoint action is given by
$$
\Ad^*(\exp{-tX_N})\xi =\sum_{j={1}}^{N-{1}} p_j(\xi,t) X_j^*,
$$
where, for ${1} \le j \le N-{1}$, $p_j(\xi,t)$ is a polynomial in
$t$ defined
 by
$$
p_j(\xi,t) =\sum_{k=0}^{j-{1}} \frac{t^k}{k!} \xi_{j-k}.
$$
Moreover, if $\xi_j \neq 0$ for at least one ${1} \le j \le N-{2}$,
then $\Ad^*(G_N)\xi$ is of dimension two, and $\Ad^*(G_N)\xi
=\{\Ad^*(\exp{t X_N})\xi + \R X_N^*, t\in\R\}$.  We shall
always identify ${\mathfrak g}_N^*$ with $\R^N$ via the mapping
$(\xi_{N},\ldots,\xi_1) \to
\sum_{j={1}}^N \xi_j X_j^*$
and  the subspace  $V =\{ \xi \in {\mathfrak g}_N^*: \xi_N
=0 \}$ with the dual space of $ \b $. For $\xi \in V$ and
 $t
\in \R$, let
$$
t\cdot \xi =\Ad^*(\exp{t  X_N})\xi$$
\begin{equation}\label{action of R}
=\Big(0,   \xi_{N-{1}} - t \xi_{N-{2}} + \ldots +
\frac{{1}}{(N-{2})!} (-t)^{N-{2}} \xi_{1},\ldots,\xi_{2} - t\xi_{1},
\xi_{1}\Big).
 \end{equation}
As in \cite{A.K.L.S.S.}, we define the function $\widehat{\xi}$ on
$\Re$ by
\begin{equation}\label{polynomialfunction}
 \widehat{\xi}(t):=(t\cdot\xi)_{N-1}
=\xi_{N-{1}} - t \xi_{N-{2}} + \ldots + \frac{{1}}{(N-{2})!}
(-t)^{N-{2}} \xi_{1}.
 \end{equation}
Then the mapping $\xi \to \widehat{\xi}$ is a linear isomorphism
of $V$ onto $P_{N-{2}}$, the space of real polynomials of degree
at most $N-{2}$. In particular, $\xi_k \to \xi$ coordinate-wise in
$V$ as $k \to \infty$ if and only if $\widehat{\xi}_k( t) \to
\widehat{\xi}(t)$ for all $t \in \Re$. Also, the mapping $\xi \to
\widehat{\xi}$ intertwines the $\Ad^*$-action and translation in
the following way:
$$
\widehat{t \cdot \xi}(s) =(s \cdot (t \cdot \xi))_{N-{1}}
$$
$$ =((s+t)\cdot \xi)_{N-{1}} =\widehat{\xi}(s+t)
$$
for $\xi \in V$ and $s, t \in \Re$. \me

By Kirillov's orbit picture of the dual space of a nilpotent Lie
group, we can describe the  irreducible unitary representations of
$G_N$ in the following way (see \cite{CG} for details). For any
non-constant polynomial $p=\hat \ell \in P_{N-{2}}$ we consider
the induced representation $\pi_\ell=\text{ind}_B^G\ch_\ell$,
where $\chi_\ell$ denotes the unitary character of the abelian
group $B$ defined by:
 \begin{equation*}
  \chi_\ell(\exp U)=e^{- 2\pi i \sp \ell U }, U\in \b.
  \end{equation*}
Since $\b$ is abelian of codimension ${1}$, it is a polarization
at $\ell$ and so $\pi_\ell$ is irreducible. Every infinite
dimensional irreducible unitary representation of $G_N$ arises in
this manner
up to equivalence.   \\

Let us describe the representation $ \pi_\ell,\ell\in \b^*, $
explicitly. The Hilbert space $ \H_\ell $ of the representation $
\pi_\ell$ is the space $ L^2(G_N/B,\ch_\ell) $ consisting of all
measurable functions $ \tilde\xi:G_N\to \C$, such that $
\tilde\xi(gb)=\ch_\ell(b\inv)\tilde\xi(g) $ for all $ b\in B $ and
all $ g\in G  $ outside some set of measure of Lebesgue measure 0
and such that the function $ \val{\tilde\xi} $ is contained in $
L^2(G_N/B)$. We can identify the space $ L^2(G_N/B,\ch_\ell) $ in an
obvious way with $ L^2(\R) $ via the isomorphism $ U:\xi\mapsto
\tilde\xi$ where $\tilde\xi(\exp{ s X_N}b):= \ch_\ell(b\inv)\et(s),
s\in\R,b\in B $. Hence for $g=\exp{t X_N}b$ and $\xi\in L^2(\R)$ we
have an explicit expression for the operator $
\pi_\ell(g) $:
\begin{eqnarray}\label{pilexpressed}
 \pi_\ell(g)\xi(s)&=& \tilde \xi(g\inv \exp{ s X_N}) \\
\nonumber &=&  \tilde \xi(b\inv \exp{(s-t)X_N})\\
\nonumber &=& \tilde \xi(\exp{(s-t)X_N} (\exp{(t-s)X_N}b\inv \exp{(s-t)X_N})) \\
\nonumber &=&  \ch_\ell(\exp{(t-s)X_N}b \exp{(s-t)X_N})\xi(s-t)\\
\nn  &=&e^{{-2\pi i}  \ell(\rm{Ad}(\rm{exp}((t-s)X_N)\log(b))}\xi(s-t), s\in\R.
\end{eqnarray}

We can parameterize the orbit space $ \g_N^*/G_N  $ in the
following way. First we have a decomposition
\begin{eqnarray}
 \nn  \g_N^*/G_N=\bigcup_{j=1}^{N-2}{\g_N^*}^j/G_N \bigcup X^*,
\end{eqnarray}
where
\begin{eqnarray}
 \nn {\g_N^*}^j:=\{\ell\in\g_N^*, {\ell(X_i)=0, i=1,\cdots,j-1, \ell(X_j)\ne
0}\}
\end{eqnarray}
and where
\begin{eqnarray}
 \nn  X^*:=\{\ell\in {\g^*_N},
\ell(X_j)=0, j=1,\cdots,N-2\}
\end{eqnarray}
denotes the characters of $ G_N $.
%The subset $ \g^*_1 $ is dense in $ \g^* $.
A character of the group $ G_N $ can be written as $ \ch_{a,b}, a,b\in\R, $
where\begin{eqnarray}
 \nn \ch_{a,b}(x_N,x_{N-1},,\cdots, x_1):=e^{{-2\pi i}   ax_N {-2\pi i}   b
x_{N-1}},\
(x_N,,\cdots, x_1)\in G_N.
\end{eqnarray}

For any $ \ell\in   {\g_N^*}^j, N-2\geq j\geq 1$ there exists
exactly one element $ \ell_0 $ in the $ G_N $-orbit of $ \ell $,
which satisfies the conditions
$$\ell_0(X_j)\ne0, \ell_0(X_{j+1})=0, \ell_0(X_N)=0.$$

We can thus parameterize the orbit space $\g^*_N/G_N $, and hence
also the dual space $ \widehat {G_N} $, with the
sets\begin{eqnarray}
 \nn S_N &:=&\bigcup_{j=1}^{N-2}S_N^j\bigcup X^*,
\end{eqnarray}
where $ {S_N^j}:= \S_N\cap {\g^*}^j=\{\ell\in
{\g^*_N}^j,\ell(X_{k})=0, k=1,\cdots, j-1, j+1, \ell(X_j)\ne 0 \}
$.   Let
\begin{eqnarray}
 \nn S_N^{gen} &:=&\bigcup_{j=1}^{N-2}S_N^j
\end{eqnarray}
 be the family of points in $ S_N $, whose $ G_N $-orbits are of dimension $ 2
$.

\subsection{The topology of $\widehat{G_N}$}

The topology of the dual space of $ G_N $ has been studied in detail
in the papers \cite{A.L.S.} and \cite{A.K.L.S.S.} based on the methods developed
in \cite{LRS} and \cite{L}. We need the
following description of the convergence of sequences $(\pi_k)_k$ of
representations in $\widehat{G_N}$.
%orbits in $ \g_N^*/G_N $.

Let $(\pi_k)_k$ be a sequence in $\widehat{ G_N}$. It is said to be
{\it properly convergent} if it is convergent and all cluster points
are limits. It is known (see \cite{LRS}) that any convergent
sequence has a properly convergent subsequence.

\begin{proposition}\label{sequences}

Suppose that
%$ (\OM_k)_k $
$(\pi_k=\pi_{\ell_k})_k, (\ell_k\in S_N^{\rm gen}, k\in\N)$ is a sequence in
$\widehat{G_N}$ that
 has a cluster point. Then
there exists a subsequence, (also indexed by the symbol $ k $ for
simplicity), called   \textit{with perfect data}   such that
%$ (\OM_k)_k $
$(\pi_k)_k$ is properly converging and  such that the polynomials $
p_k$, $k\in\N,$  associated to  $\pi_k$  have the following
properties: The polynomials $ p_k $ have all the same degree $ d $.
Write  $$p_k(t):= c_k \prod_{j=1}^d (t-a_j^k)=\hat \ell_k(t),t\in\R,
\ell_k\in V.$$

 There
exist  $ 0<m\leq 2d $, real sequences
 $ (t_i^ k)_k$ and polynomials $q_i$ of degree $d_i\leq d$, $i=1,\cdots, m$,
such that
\begin{enumerate}
\item  $\lim _{k\to\iy}p_k(t+t_i^k)\to q_i(t)$, $t\in \R$, $1\leq
i\leq m$ or equivalently
 $ \lim_{k\to\iy} t_i^k\cdot \ell_k\to \ell^i$,  where $ \ell^i $ in $ V $ such
that $\hat\ell^i(t)=q_i(t)$.
 \item  $\lim_{k\to\iy}
\val{t^k_i-t^k_{i'}}=+\iy$, for all $ i\ne i'\in\{1,\cdots,m\} $.
%\item   $ t^k_i\leq t^k_{i+1} $ for all $ i=1,\cdots,m-1 $ and all
%$ k $;
\item  If $ C=\{i\in \{1,\cdots,m,\}, \ell^i  \text{ is a
character }\} $ then for all $i\in C$
\begin{enumerate}
\item $\lim_{k\to\iy}\val{t_i^k-a^k_j}=+\iy$ for all $
j\in\{1,\cdots,d\} $;
 \item  there exists an index $ j(i)\in
\{1,\cdots,d\} $ such that $ \val{t^k_{i}-a^k_{j(i)}}\leq
\val{t^k_{i}-a^k_j} $  for all $ j\in \{1,\cdots,d\}$; let
$$\rh^k_i:= \val{t^k_{i}-a^k_{j(i)}};$$
\item there exists a subset $ L(i)\subset \{1,\cdots,m\}$, such
that $\lim_{k\to \iy}
\frac{\val{t^k_{i}-a^k_j}}{\rh^k_i} $ exists in $
\R $ for every $ j\in L(i) $ and such that $\lim_{k\to \iy}
\frac{\val{t^k_{i}-a^k_j}}{\rh^k_i}=+\iy $ for $
j\not \in L(i) $;

\item  the polynomials $ (t^k_i+s \rh_i^ k)\cdot p_k  $ in $ t $
converge uniformly on compacta to the constants
$$\lim_{k\to\iy} (t^k_i+s \rh_i^ k)\cdot p_k(t) =p^i(s), s\in\R, $$
 and these constants define a real polynomial of degree $ \# L(i) $ in $ s $.
\item  If $ i'\ne i\in C $, then $ L(i)\cap L(i')=\es $.
 \end{enumerate}

\item  Let  $ D=\{1,\cdots,m\}\setminus C $ and  write $
\rh_i^k:=1 $ for $ i\in D $. For $ i\in D $, let $$ J(i):=\{1\leq
j\leq d, \lim_{k\to\iy}\vert t^k_i-a^k_j\vert=\iy\} .$$

  Suppose that
$ (t_k)_k $ is a real sequence, such that  $ \lim_{k\to\iy}
t_k\cdot \ell_k\to \ell$ in $ \g_N^* $, then
\begin{enumerate}
\item if $\ell$ is a non-character, then the sequence
$(|t_k-t_i^k|)_k$ is bounded for some $i\notin C$; \item if $\ell$
is a character, then
$\lim_{k\to\infty}\left|\frac{t_k-a_j^k}{t^k_i-a_j^k}\right|$
exists for some $i\in C$ and some $j\in L(i)$ and $
\ell\res\b=q_i(s) X^*_{N-1}$ for some $ s\in\R $.
\end{enumerate}

\item  Take any real sequence  $ (s_k)_k  $, such that $
\lim_{k\to\iy }\vert s_k \vert =+\iy $, and such that for any $ i\in
D $, $j\in J(i)$, $\frac{s_k}{|t_i^k-a_j^k|}\to 0$, and  for $ i\in
C $, $j\notin L(i)$, $\frac{s_k\rho_i^k}{|a_j^k-t_i^k|}\to 0$ and
$\frac{s_k}{\rho_i^k}\to 0$ as $k\to\infty$. Let
$$S_k:=
(\bigcup_{i=1}^m [t_i^k-s_k\rh^k_i,t_i^k+s_k\rh^k_i]); \
T_k:=\R\setminus S_k, k\in \N.$$ Then for any sequence $(t_k)_k$,
$t_k\in T_k,$ we have $t_k\cdot l_k\to\infty$.

We say that the sequence $(s_k)_k$   is \textit{ adapted to the
sequence $ (\ell_k) $. }
\end{enumerate}
 \end{proposition}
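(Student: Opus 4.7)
The statement is a structural compactness theorem for coadjoint orbits, and I would approach it by translating it into the language of polynomials. Recall that $\ell\mapsto\hat\ell$ is a linear isomorphism $V\to P_{N-2}$ under which the $\Ad^*$-action of $\exp(tX_N)$ becomes translation $\hat\ell(\cdot)\mapsto\hat\ell(\cdot+t)$. The cluster points of $(\pi_{\ell_k})_k$ in $\widehat{G_N}$ correspond, via the orbit picture of \cite{CG}, to limits in $\b^*$ of sequences $t_k\cdot\ell_k$ for appropriate reals $t_k$, so the whole statement reduces to enumerating all possible limit directions $(t_k)$ for the translated polynomials $p_k(\cdot+t_k)$. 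Since $\ell_k\in S_N^{gen}$, $p_k=\hat\ell_k$ is a nonconstant polynomial of degree at most $N-2$; a first subsequence extraction makes the degree constantly equal to some $d$, after which $p_k=c_k\prod_{j=1}^d(t-a_j^k)$ as in the statement.

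I would construct the centers $t_i^k$ greedily. Start by choosing $t_1^k$ so that, along a subsequence, $p_k(\cdot+t_1^k)$ converges in $P_{N-2}$ to a nonzero polynomial $q_1$; such a sequence exists because the coefficient vectors of all possible translates can be normalized into a bounded set. Having produced $t_1^k,\ldots,t_{i-1}^k$, if there exists a real sequence $(t^k)$ with $\lim_k|t^k-t_j^k|=+\infty$ for each $j<i$ and such that $p_k(\cdot+t^k)$ admits a nonzero polynomial limit, take $t_i^k$ to be such a sequence and let $q_i$ be that limit; otherwise stop. The procedure terminates with $m\leq 2d$ because each new center captures at least one root-cluster of $p_k$ (some factors of $p_k(\cdot+t_i^k)$ become factors of $q_i$) and each of the $d$ roots can be captured from at most two sides of the real line. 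This establishes (1) and (2).

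For fixed $i$, I would decompose the indices into $J(i)=\{j:|t_i^k-a_j^k|\to\infty\}$ and its complement: the latter contributes the roots of $q_i$, while the divergent factors must combine with $c_k$ to produce the finite, nonzero leading coefficient of $q_i$. If $\ell^i$ is a character ($i\in C$), then $q_i$ is constant, so $J(i)=\{1,\ldots,d\}$ and all roots escape, which is (3)(a). Defining $\rho_i^k=|t_i^k-a_{j(i)}^k|$ with $j(i)$ a closest root and passing to a further subsequence so that the ratios $|t_i^k-a_j^k|/\rho_i^k$ converge in $[1,+\infty]$ and the signs of $t_i^k-a_j^k$ stabilize produces the set $L(i)$ of (c). From
\[
p_k(t_i^k+s\rho_i^k)=c_k\prod_{j\in L(i)}\rho_i^k\left(s+\tfrac{a_j^k-t_i^k}{\rho_i^k}\right)\prod_{j\notin L(i)}(t_i^k-a_j^k)\bigl(1+o(1)\bigr),
\]
together with $c_k\prod_j(t_i^k-a_j^k)=p_k(t_i^k)\to q_i(0)\neq 0$, the prefactor $c_k(\rho_i^k)^{\#L(i)}\prod_{j\notin L(i)}(t_i^k-a_j^k)$ is forced to converge to a nonzero constant, and the limit $p^i$ is a polynomial of degree exactly $\#L(i)$. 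Disjointness (3)(e) follows because $j\in L(i)\cap L(i')$ for $i\neq i'\in C$ would give $|t_i^k-t_{i'}^k|=O(\rho_i^k+\rho_{i'}^k)$, after which a direct asymptotic comparison using the closest-root definition of the $\rho_i^k$ contradicts condition (2).

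Items (4) and (5) express that the enumeration is exhaustive. For (4), given $t_k\cdot\ell_k\to\ell\in\g_N^*$, the shifted polynomial $p_k(\cdot+t_k)$ converges to $\hat\ell$, and the maximality of the greedy construction forces $(t_k-t_i^k)_k$ to be bounded for some $i\notin C$ when $\ell$ is non-character, or the ratio $(t_k-t_i^k)/\rho_i^k$ to be bounded for some $i\in C$ when $\ell$ is a character, with the limit identifying the shift $s$ in the formula $\ell|_\b=q_i(s)X^*_{N-1}$. For (5), the constraints on the adapted sequence $(s_k)$ are designed to ensure that the windows $[t_i^k-s_k\rho_i^k,t_i^k+s_k\rho_i^k]$ cover every region in which $t_k\cdot\ell_k$ could possibly converge, so any $t_k\in T_k$ must satisfy $t_k\cdot\ell_k\to\infty$. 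The main technical obstacle throughout is the bookkeeping in the nested subsequence extractions; in particular, showing that a single adapted sequence $(s_k)$ can be chosen to satisfy simultaneously all the asymptotic relations imposed by the finitely many pairs $(i,j)$ requires a careful diagonal argument, which is the place where the compactness reasoning is most likely to break down unless one is cautious about the order of the extractions.
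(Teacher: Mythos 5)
There is a genuine gap, and it sits exactly where the paper leans on the earlier literature. Your greedy construction of the centers $t_i^k$ does not terminate, so the bound $m\le 2d$ and all later appeals to ``maximality'' are unsupported. Your termination argument (``each new center captures at least one root-cluster, and each root can be captured from at most two sides'') is valid only for non-character limits, where $t_i^k$ stays at bounded distance from some root $a_j^k$; by (3)(a) a character-limit center keeps \emph{no} root at bounded distance. Already for $p_k(t)=c_kt^d$ with $c_k\to 0$, the sequences $t^k=\alpha c_k^{-1/d}$ for $\alpha=1,2,3,\dots$ are pairwise mutually divergent and each gives a (distinct) nonzero constant limit, so a procedure that only demands mutual divergence plus a nonzero polynomial limit produces infinitely many character centers. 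The finiteness of the family is not a by-product of greediness: it comes from grouping all character limits occurring at a comparable scale into a single center equipped with the second-order data $\rho_i^k$, $L(i)$, $p^i$, with the sets $L(i)$ \emph{chosen} disjoint so that $\sum_{i\in C}\#L(i)\le d$. This is precisely the ``maximal family'' of Definition~6.4 and Proposition~6.2 of \cite{A.L.S.}, which the paper's proof invokes rather than reconstructs; your proposal replaces it by an argument that breaks down.

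Two further consequences. First, (3)(e) is not, as you claim, a contradiction with condition (2): if $j\in L(i)\cap L(i')$ one only gets $|t_i^k-t_{i'}^k|\le C(\rho_i^k+\rho_{i'}^k)$, which is perfectly compatible with $|t_i^k-t_{i'}^k|\to\infty$ when $\rho_i^k\to\infty$ (e.g.\ $t_{i'}^k=t_i^k+2\rho_i^k$ in the example above satisfies (1), (2), (3a)--(3d)); disjointness is a normalization that must be built into the selection of the family, not derived afterwards. Second, since your construction never reaches a well-defined maximal family, the exhaustiveness statements (4a), (4b) and hence (5) are asserted rather than proved; the paper obtains (4a) from the finiteness of the non-character part of the limit set (\cite{L}) together with $i_\sigma$-convergence (\cite{A.K.L.S.S.}, \cite{A.L.S.}), obtains (4b) from the maximality of the family via Proposition~6.2 of \cite{A.L.S.}, and then deduces (5) from (4) by the short triangle-inequality argument with the bounds defining an adapted sequence $(s_k)_k$. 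Your computation showing that the window limit $p^i$ has degree $\#L(i)$ is essentially correct (modulo a sign slip and the degenerate case $q_i\equiv 0$), and your reduction to translated polynomials matches the paper's setup; but without the scale-grouping construction for the character centers the proposal does not establish the proposition.
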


\begin{proof}
We may assume that $(\pi_k)_k$ is properly convergent with limit set $L$.  We
can also assume, by passing to a subsequence, that
 each $p_k$ has degree $d$.  By \cite{L} the number of non-characters in $L$ is
finite. Let  this subset of non-characters be denoted by $L^{gen}$.
If $L^{gen}$ is non-empty by passing further to a subsequence we may assume the
sequence $(\pi_k)_k$ converges $i_{\sigma}$-times to each character $\sigma\in
L^{gen}$ (see p.34, \cite{A.K.L.S.S.} for the definition of $m$-convergence and
p.253 \cite{A.L.S.}). Let $s=\sum_{\sigma\in L^{gen}}i_{\sigma}$. Then there
exist
 non-constant polynomials $q_1,\ldots,q_s$
of degree $d_i\leq d$, $i=1,\ldots,s$, and sequences
$(t_1^k)_k,\ldots, (t_s^k)_k$ such that the conditions (1) and (2)
are fulfilled and for each $\sigma\in L^{gen}$ there are
$i_{\sigma}$ equal polynomials amongst $q_1,\ldots, q_s$
corresponding to $\sigma$. Then if $(t_k)_k$ is a real sequence such
that $t_k\cdot \ell_k\to \ell$, $\ell$ is a non-character, then
$\ell$ corresponds to some $\sigma\in L^{gen}$ and we may assume
that $\hat \ell=q_i$ for some $i\in{1,\ldots,s}$. It follows from
the definition of $i_{\sigma}$-convergence  that the sequences
$(t_k\cdot\ell_k)$ and $(t_i^k\cdot\ell_k)$ are not disjoint
implying $|t_k-t_i^k|$ is bounded and therefore (4a).

If $(\pi_k)_k$ has a  character as a limit point then passing if necessary to a
subsequence  we can find a maximal family of real sequences $(t_l^k)_k$,
$l<s\leq m\leq d$, constant polynomials $q_l$, non-negative sequences
$(\rho_l^k)_k$ and
polynomials $p_l$ satisfying $(1)-(4)$ (see Definition~6.4 and the discussion
before in \cite{A.L.S.}).

The condition (5b) follows from the maximality of the family of sequences
$(t_l^k)_k$ and the proof of Proposition~6.2, \cite{A.L.S.}.

Suppose now that we have a sequence $ (t_k)_k $ such that $ t_k\in
T_k $ for every $ k $ and such that some subsequence (also indexed
by $ k $ for simplicity of notations)  $ ( t_k\cdot \ell_k)_k $
converges to an $ \ell \in \g^*$. By condition (5) then either for
some $ i\in D $, the sequence $ (t_k-t^k_i)_k $ is bounded, i.e $
t_k\in [t_i^k-s_k\rh_i^k,t_i^k+s_k \rh_i^k]  $ for $ k $ large
enough, which is impossible, or we have an $ i\in C $, such that
$\lim_{k\to\infty}\left|\frac{t_k-a_j^k}{t^k_i-a_j^k}\right|$ exists
for some $ j\in L(i) $. But then
\begin{eqnarray}
\nonumber \frac{\val{t_k-t_i^k}}{\rh_i^k}&\leq&\frac{\val{t_k-a_j^k}}{\rh_i^k} +
\frac{\val{t_i^k-a_j^k}}{\rh_i^k}
\\
\nonumber &=&
\frac{\val{t_k-a_j^k}}{\val{t_i^k-a_j^k}}\frac{\val{t_i^k-a_j^k}}{\rh_i^k}
+\frac{\val{t_i^k-a_j^k}}{\rh_i^k}
\end{eqnarray}
and so the sequence $ (\frac{\val{t_k-t_i^k}}{\rh_i^k})_k $ is bounded, i.e
$ t_k\in  [t_i^k-s_k\rh_i^k,t_i^k+s_k
\rh_i^k]\subset S_k  $ for $ k $ large enough, a contradiction. Hence
 $ \lim_k t_k\cdot \ell_k=\iy $ whenever $ t_k\in T_k $ for large $ k
$.

\end{proof}
%\begin{proposition}
%The subspaces $ S_N^j $, considered as subspaces  of
% $ \widehat{G_N} $ are Hausdorff.
%\end{proposition}

%\begin{proof}
%Let $\{G_N\cdot\ell_k\}_k$ be a sequence of $G_N$-orbits in $S_N^j$ converging to some
%orbit $G_N\cdot\ell$ in $S_N^j$. Then $\ell_k(X_j)\to\ell (X_j)\ne 0$ as $k\to\infty$. If we have another limit point
%$G_N\cdot\ell'$ for some $\ell'\in g_N^*$  then there exists a sequence $\{t_k\}_k\subset\R$ such that $(Ad^*(\exp{t_kX_N})\ell_k\to\ell$,
%in particular, $(t_k\cdot \ell_k)(X_{j+1})=-t_k\cdot\ell_k(X_j)\to \ell'(X_{j+1})$ giving that $\{t_k\}_k$ is bounded which is impossible.
%\end{proof}

\begin{example} \label{ex_heis}
\rm
Let us consider the Heisenberg group $G_3$. Then $S_3=S_3^1\cup \R^2$. Let $(\ell_k)\in S_3^1$.  Then $\ell_k=\la_k X_1^*$,
$\la_k\in \R^*$.
The associated polynomials are $p_k(t)=-\la_k t$ ($d=1$, $c_k=-\la_k$, $a_1^k=1$). Assume that $(\ell_k)$ is a sequence with perfect data. Then either $\pi_{\ell_k}$ converges to $\pi_{\ell}$, $\ell\in S_3^1$, or
$\pi_{\ell_k}$ converges to a character and in this case $\la_k\to 0$ as $k\to\infty$. We shall consider now the second case.
So we have $m=1$ and $\ell^1=X_2^*$ with the corresponding polynomial $q_1(t)=1$ and $t_1^k=-1/\la_k$ and thus $\rho^k_1=1/|\la_k|$.
The polynomial $p^1(s)$ is the limit $$\lim_{k\to\infty}p_k(t_1^k+s\rho_i^k+t)=\lim_{k\to\infty}(-\la_k)(-1/\la_k+s/|\la_k|+t)=\lim_{k\to\infty}(1-\text{sign}\la_k s-\la_k t).$$
Since $(\ell_k)$ is a sequence with perfect data, the sign of $\la_k$ is constant, implying
$q_11(s)=1+\epsilon s$, where $\epsilon=\pm 1$. A real sequence $(s_k)$ is
adapted to $(\ell_k)$ if and only if $s_k\to\infty$ and $s_k|\la_k|\to 0$.

\end{example}

\subsection{A $C^*$-condition}

Let $C^*(G_N)$ be the full $C^*$-algebra of $G_N$ that is the completion of the convolution algebra $L^1(G_N)$ with respect to the norm
$$\|f\|_{C^*(G_N)}=\sup_{\ell\in\S_N}\noop{\int_{G_N} f(g)\pi_{\ell}(g) dg}.$$

\begin{definition}\label{fourztrcompsupp}
\rm Let $ f\in L^1(G_N) $. Define the function $  \hat f^2 $ on $
\R\times \b^ *$ by $$ \hat f^2(s,\ell):=\int_{B} f(s, u)e^{{-2\pi i}
\ell(\log(u))} du, s\in\R, \ell\in\b^*.$$

We denote by $ L^1_c (G_N)$ the space of functions $ f\in L^1(G_N)
$, for which $ \hat f^2 $ is contained in $ C_c^\iy(\R\times\b^*) $,
the space of compactly supported $C^{\infty}$-functions on $\R\times
\b^*$. The subspace $ L^1_c (G_N)$ is dense in $ L^1(G_N) $ and
hence in the full $C^*$-algebra $ C^*(G_N)$
 of $G_N$.
 \end{definition}

\begin{proposition}\label{cstindst}
Take $ f\in L^1_c (G_N)$ and let $ \ell\in S_N^{gen} $. Then the
operator $ \pi_\ell(f) $ is a kernel operator with kernel function
$$f_\ell(s,t)=\hat f^2(s-t, t\cdot \ell), s,t\in\R. $$
 \end{proposition}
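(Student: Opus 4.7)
The plan is to compute $\pi_\ell(f)\xi(s)$ directly from the explicit formula (\ref{pilexpressed}) and an exchange of the order of integration. Writing each element of $G_N$ in the form $g=\exp(tX_N)\exp(U)$ with $t\in\R$ and $U\in\b$, the Haar measure decomposes as $dt\,dU$ because $G_N$ is the semidirect product of $\R X_N$ with the abelian normal subgroup $B$. Using (\ref{pilexpressed}) I get
\begin{eqnarray*}
\pi_\ell(f)\xi(s)&=&\int_\R\int_\b f(\exp(tX_N)\exp(U))\,e^{-2\pi i\,\ell(\mathrm{Ad}(\exp((t-s)X_N))U)}\,\xi(s-t)\,dU\,dt.
\end{eqnarray*}
After the substitution $t\mapsto s-t$, the exponential becomes $e^{-2\pi i\,\ell(\mathrm{Ad}(\exp(-tX_N))U)}$ and the function $\xi$ is evaluated at $t$, so the expression rearranges as an iterated integral with $\xi(t)$ on the outside and an inner integral over $B$ in the variable $U$.

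The second step is to recognize that inner integral as $\hat f^2(s-t,t\cdot\ell)$. For this I need the identification
$$\ell\bigl(\mathrm{Ad}(\exp(-tX_N))U\bigr)=\bigl(\mathrm{Ad}^*(\exp(tX_N))\ell\bigr)(U)=(t\cdot\ell)(U),$$
which is immediate from the definition of the coadjoint action and the convention $t\cdot\ell=\mathrm{Ad}^*(\exp(tX_N))\ell$ introduced before (\ref{action of R}). Plugging this in, the inner integral is exactly
$$\int_B f(\exp((s-t)X_N)u)\,e^{-2\pi i\,(t\cdot\ell)(\log u)}\,du=\hat f^2(s-t,t\cdot\ell),$$
so $\pi_\ell(f)\xi(s)=\int_\R \hat f^2(s-t,t\cdot\ell)\,\xi(t)\,dt$, which is the desired kernel formula.

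The only technical point is the legitimacy of interchanging the order of integration and of performing the substitution. Since $f\in L^1_c(G_N)$, the function $\hat f^2$ lies in $C_c^\infty(\R\times\b^*)$, and in particular the absolute value of the integrand is bounded by $|\hat f^2(s-t,t\cdot\ell)|\cdot|\xi(t)|$, which is compactly supported in $t$ (for fixed $s$ and $\ell$) and therefore integrable against any $\xi\in L^2(\R)$ on each compact set; Fubini then applies on the original integral before the substitution, justifying all the manipulations. I do not expect a genuine obstacle here — the main care is just bookkeeping of the coadjoint action and of the Haar-measure decomposition.
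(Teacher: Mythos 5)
Your computation is correct and is essentially the paper's own proof: both start from formula (\ref{pilexpressed}), parametrize $G_N=\exp(\R X_N)B$ with Haar measure $dt\,dU$, substitute $t\mapsto s-t$, identify $\ell(\mathrm{Ad}(\exp(-tX_N))U)=(t\cdot\ell)(U)$, and recognize the inner $B$-integral as $\hat f^2(s-t,t\cdot\ell)$. The extra Fubini remark is harmless (and fine, since $f\in L^1$ and the $s$-support of $\hat f^2$ is compact), so there is nothing to correct.
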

\begin{proof}
Indeed, for $\xi\in L^2(\R), s\in\R,$ we have that
\begin{eqnarray}\label{star}
\nonumber \pi_\ell(f)\xi(s)&=& \int_{G_N}f(g)\pi_\ell(g)\xi dg \\
\nonumber &=& \int_\R\int_B f(t,b)
e^{{-2\pi i}  \ell(\rm{Ad}(\rm{exp}((t-s)X_N)\log(b))}\xi(s-t)db dt (\text{ by
\ref{pilexpressed}})\\
 &=& \int_\R\int_B f(s-t,b)
e^{{-2\pi i}  {(\rm Ad}^*({\rm exp}({tX_N})(\ell)(\log(b))}\xi(t)db dt\\
\nonumber &=& \int_\R \hat f^2(s-t,{\rm Ad}^*({\rm exp}({tX_N}(\ell))
\xi(t)dt\\
\nonumber &=& \int_\R\hat f^2(s-t,t\cdot \ell)\xi(t) dt.
\end{eqnarray}
\end{proof}
\begin{definition}\label{leftgoverc}
\rm  Let $ \c:=\text{ span }\{X_1,,\cdots, X_{N-2}\} $. Then $ \c $ is an
abelian ideal of $ {\g_N} $, the algebra $ \g_N/\c $ is abelian and isomorphic
to $ \R^2 $ and $ C:=\exp{\c} $ is an abelian closed normal subgroup of $ G_
N.$

Let
$$ \rho=\ind C{G_N }1 $$ be the left regular representation of $
G_N $ on the Hilbert space $ L^2(G_N/C) $. Then the image $
\rho(C^*(G_N)) $ is  the $ C^*$-algebra of $ \R^2 $ considered
as algebra of convolution operators on $ L^2(\R^2) $ and hence $ \rho
(C^*(G_N))$ is isomorphic to the algebra $ C_0(\R^2) $ of continuous
functions vanishing at infinity on $ \R^2 $.
As for the Heisenberg algebra we have that if $f\in L^1(G_N)$ then the Fourier transform
$\widehat{\rho(f)}(a,b)$ of $\rho(f)\in C^*(\R^2)$ equals $\hat f(a,b,0,\ldots,0)$.
 \end{definition}

Our aim is to realize the $C^*$-algebra $C^*(G_N)$ as a
$C^*$-algebra of operator fields.
\begin{definition}\label{fourN}
\rm   For $ a\in C^*(G_N) $ we define the Fourier transform $F(a)$ of $
a
$ as operator field
$$ F(a):=\{(A(\ell):=\pi_\ell(a), {\ell\in S_N^{gen}},
A(0):=\rho(  a)\in C^*(\R^2)\}. $$

 \end{definition}

\begin{remark}\label{continorm}
\rm   We observe that the spaces $ S_N^j, j=1,\cdots, N-2,  $ are Hausdorff
spaces if we equip them with the topology of $ \widehat {G_N} $. Indeed, let $
(\ell_k)_k $ be  a sequence in $ S_N^j $, such that the sequence of
representations $ ({\pi_{\ell_k}})_{k }$ converges to some $ \pi_\ell $ with $
\ell\in S_N^j$. Then the numerical sequence $ (\la_k:=\ell_k(X_k))_k $ converges
to $ \la:=\ell(X_j)\ne 0 $. Suppose now that the same sequence $ (\pi_{\ell_k})
$ converges to some other point  $\pi_{\ell'} $. Then there exists a numerical
sequence $ (t_k)_k $ such that $ {\Ad^*(\exp{t_k X_N})\ell_k}\res{\b} $
converges to $ \ell'\res\b $. In particular $-\la_k t_k= \Ad^*(\exp{t_k
X_N})\ell_k(X_{j+1} )\stackrel{k\to\iy}{\to} \ell'(X_{j+1})$. Hence the
sequence $ (t_k)_k $ converges to some $ t\in \R $ and $
\pi_{\ell'}=\pi_\ell$.  Similarly, we see from (\ref{star}) that
for $
f\in L^1_c(G_N) $, the
mapping $ \ell\to \pi_\ell(f) $ is norm continuous when restricted
to  the sets $ S_N^j, j=1,\cdots, N-2 $, since for the sequence $
(\pi_{\ell_k})_k $ above, the functions $ f_{\ell_k} $ converge in
the $L_2$-norm to $ f_\ell $.
 \end{remark}

\begin{definition}\label{condpq}
{\rm

Define for $ t,s\in \R $ the selfadjoint projection operator on $
\l2 \R$ given by
$$M_{t,s}\xi(x):=1_{(t-s,t+s)}(x)\xi(x), x\in \R, \xi\in\lt,$$
where $ 1_{(a,b)}, a,b\in\R, $ denotes the characteristic function
of the interval $ (a,b)\subset \R.  $

We put for $ s\in\R $
$$M_s:=M_{0,s}.$$

 More generally, for a measurable subset $ T\subset\R $, we
let $ M_T $ be the multiplication operator with the characteristic
function of the set $ T $. For $ r\in\R $, let $ U(r) $ be the
unitary operator on $ \l2\R $ defined by
\begin{eqnarray}
\nn U(r)\xi(s) &:=&\xi(s+r),\xi\in\l2\R,s\in\R.
\end{eqnarray}
}
\end{definition}

\begin{definition}
\rm Let $ (\pi_{\ell_k})_k $ be a properly converging sequence in
$ \widehat G_N $ with perfect data $ ((t^k_i)_k, (\rh_i^k), (s^k_i))
$.
 Let $ i\in C $ and let $ \et\in \D(\R^n) $ such that $ \et$ has $ L^2 $-norm
1. Define for $ \rh^k_i,k\in\N, i\in C, $ and $ u=(a,b)\in\R^{2} $
the Schwartz function
\begin{eqnarray}
 \nn \et(i,k,u)(s) &:=&\et(s_k p^i\left(\frac{s}
{\rh^k_i}\right)+s_k b)e^{{2\pi i a}\cdot s}, s\in\R.
\end{eqnarray}

By Example~\ref{ex_heis}, for $N=3$ we have
$$\et(1,k,u)=\et (\pm s_k|\la_k|s+s_k(1+b))e^{{2\pi i a}\cdot s}.$$

Let $ P_{i,k,u} $ be the operator of rank one defined by
\begin{eqnarray}\label{defpiku}
 \nn P_{i,k,u} \xi&:=&\langle{\xi},{\et(i,k,u)}\rangle\et(i,k,u), \xi\in \lt.
\end{eqnarray}
\end{definition}

\begin{definition}\label{pnuk}
\rm  For an  element $ \va\in \S(\R^2) $ let \begin{eqnarray}
\nn \nu(\va)(i,k) &:=s_k\int_{\R^2} \hat  \va(a,-b)P_{i,k,u}{dadb}, k\in\N,
i\in C.
\end{eqnarray}
 \end{definition}

Then for $ \va\in \S(\R^2),\xi\in L^2(\R), s\in\R, $ we have that
\begin{eqnarray}\label{diffnuk}
 \nn \nu(\va)(i,k)(\xi)(s) &:=&s_k \int_{\R^2} \hat
\va(a, -b)(P_{i,k,u}\xi)(s){du} \\
\nonumber &=&  s_k\int_{\R^{2}}\hat \va(a,-b)\left( \int_{\R}\xi(t)
\overline{\et(s_k p^i\left(\frac{t} {\rh^k_i}\right)+s_k b)}e^{{-2\pi i}
a\cdot (t-s)} dt \right)\\
\nonumber&&\et(s_k p^i\left(\frac{s}
{\rh^k_i}\right)+s_k b)db da \\
\nonumber &=& s_k \int_{\R}\int_{\R}\hat \va^2(s-t,-b) \xi(t)
\overline{\et(s_k p^i\left(\frac{t} {\rh^k_i}\right)+s_k b)} \et(s_k
p^i\left(\frac{s}
{\rh^k_i}\right)+s_k b)dtdb  \\
&=& \int_{\R}\int_{\R}\hat \va^2(s-t,-\frac{b}{s_k}+p^i\left(\frac{t}
{\rh^k_i}\right))
 \overline{\et(b)}\\
 \nonumber && \et(s_k\left(p^i\left(\frac{s}{\rh^k_i}\right)
-p^i\left(\frac{t} {\rh^k_i}\right)\right)+b)\xi(t) dtdb.
\end{eqnarray}

Since $\et$ has $L_2$-norm $1$, using (\ref{star}) and
(\ref{diffnuk})
 we get
\begin{eqnarray}\label{seqik}
\nn && (U(t^k_i)\circ \pi_{\ell_k}( f)\circ U(-t^k_i)\circ
M_{s_k}-\nu (F(f)(0))(i,k)\circ
M_{s_k}) (\xi)(s)\\
\nn &=&\int_{-s_k}^{s_k}(\int_{\R}\hat f^2(s-t,(t+t^k_i)\cdot
\ell_k))-\hat f^2(s-t,-\frac{b}{s_k}+p^i\left(\frac{t}
{\rh^k_i}\right),0\ldots)\\
&&
 \overline{\et(b)} \et(s_k\left(p^i(\frac{s}{\rh^k_i})
-p^i\left(\frac{t} {\rh^k_i}\right)\right)+b)db)\xi(t) dt\\
&&\nn +\int_{-s_k}^{s_k}(\int_{\R}\hat f^2(s-t,(t+t^k_i)\cdot
\ell_k))
 \overline{\et(b)} \\
 \nn &&(\eta(b)-\et(s_k\left(p^i\left(\frac{s}{\rh^k_i}\right)
-p^i\left(\frac{t} {\rh^k_i}\right)\right)+b))db)\xi(t) dt.
\end{eqnarray}

\begin{proposition}\label{pikjcom}
Let $ \va\in C^*(\R^2) $,  $i\in C$ and $ k\in \N $. Then
\begin{enumerate}
\item the operator $ \nu(\va)(i,k) $ is
compact and $ \noop{\nu(\va)(i,k) }\leq \no\va{C^*(\R^2)}$;
\item  we have that $ \nu(\va)(i,k)^*=\nu(\va^*)(i,k)  $;
\item  furthermore $$ \lim_{k\to\iy}\noop{\nu(\va)(i,k)\circ
(\Id-M_{s_k\rho_i^k})}=0 $$
and hence
$$\lim_{k\to\iy}\noop{(\Id-M_{s_k\rho_i^k})\circ\nu(\va)(i,k)\circ
M_{s_k\rho_i^k}}=0.$$
 \end{enumerate}

 \end{proposition}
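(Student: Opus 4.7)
The plan is to prove the three assertions by first establishing them on a dense subalgebra and then extending by density, closely following Proposition~\ref{intisbounded} in the Heisenberg case.

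For (1) applied to $\varphi\in\S(\R^2)$, compactness is visible from equation \eqref{diffnuk}: since $\et\in\D(\R)$ is compactly supported, the kernel of $\nu(\varphi)(i,k)$ displayed there is Schwartz-type in $(s,t)$ and in particular Hilbert--Schmidt. For the operator norm bound, the plan is to perform the change of variable $b\mapsto c:=-b/s_k+p^i(t/\rh^k_i)$ (for fixed $t$) in \eqref{diffnuk}. This rewrites
$$\nu(\varphi)(i,k)\xi(s) \;=\; s_k\int_{\R} \psi_{k,i,c}(s)\,\bigl[\hat\varphi^2(\cdot,c)\ast(\xi\,\overline{\psi_{k,i,c}})\bigr](s)\,dc,$$
where $\psi_{k,i,c}(r):=\et(s_k(p^i(r/\rh^k_i)-c))$. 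Cauchy--Schwarz in $c$ together with the identity $\int_\R|\psi_{k,i,c}(s)|^2\,dc=1/s_k$ (immediate from the change of variable $r=s_k(p^i(s/\rh^k_i)-c)$) and the Plancherel-type bound $\|\hat\varphi^2(\cdot,c)\ast f\|_2\leq\sup_a|\hat\varphi(a,c)|\cdot\|f\|_2$ then yield $\noop{\nu(\varphi)(i,k)}\leq \|\hat\varphi\|_\infty=\|\varphi\|_{C^*(\R^2)}$, exactly as in the Heisenberg proof. The bound extends by density to all $\varphi\in C^*(\R^2)$.

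Part (2) is direct: the rank-one operator $P_{i,k,u}$ (namely $\xi\mapsto\langle\xi,\et(i,k,u)\rangle\et(i,k,u)$) is self-adjoint, so taking the adjoint under the integral sign gives $\nu(\varphi)(i,k)^*=s_k\int\overline{\hat\varphi(a,-b)}\,P_{i,k,u}\,da\,db$; the identity $\widehat{\varphi^*}(a,b)=\overline{\hat\varphi(a,b)}$ for the $L^1$-involution then shows $\nu(\varphi)(i,k)^*=\nu(\varphi^*)(i,k)$ for $\varphi\in\S(\R^2)$, and continuity extends it to $C^*(\R^2)$.

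The hard part is (3). The plan is to prove it first for the dense set $\{\varphi\in C^*(\R^2):\hat\varphi\in C_c^\infty(\R^2)\}$ and then close by density using the norm bound from (1). Suppose $\hat\varphi$ is supported in $\{|b|\leq R\}$; then $\hat\varphi^2(x,y)$ vanishes for $|y|>R$. In the kernel formula \eqref{diffnuk}, the factor $\overline{\et(b)}$ is supported in $|b|\leq M$ (by compactness of $\supp\et$), which forces $\hat\varphi^2(s-t,-b/s_k+p^i(t/\rh^k_i))\ne 0$ only when $|p^i(t/\rh^k_i)|\leq R+M/s_k$. Since $p^i$ is a non-constant polynomial of degree $\#L(i)\geq 1$, there exists a constant $c_0>0$ with $|p^i(y)|\geq c_0\,|y|^{\#L(i)}$ for $|y|$ large; hence for $|t|>s_k\rh^k_i$ (equivalently $|t/\rh^k_i|>s_k\to\infty$) and all sufficiently large $k$, the constraint fails and the integrand vanishes identically. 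Therefore $\nu(\varphi)(i,k)\circ(\Id-M_{s_k\rh^k_i})=0$ for all $k$ large. For general $\varphi\in C^*(\R^2)$ and $\varepsilon>0$, pick $\varphi_0$ with $\hat\varphi_0\in C_c^\infty(\R^2)$ and $\|\varphi-\varphi_0\|_{C^*(\R^2)}<\varepsilon$; then (1) applied to $\varphi-\varphi_0$ gives $\limsup_k\noop{\nu(\varphi)(i,k)\circ(\Id-M_{s_k\rh^k_i})}\leq\varepsilon$, whence the first limit. The second limit follows by taking adjoints using (2): one has
$$\bigl((\Id-M_{s_k\rh^k_i})\circ\nu(\varphi)(i,k)\circ M_{s_k\rh^k_i}\bigr)^* \;=\; M_{s_k\rh^k_i}\circ\nu(\varphi^*)(i,k)\circ(\Id-M_{s_k\rh^k_i}),$$
whose norm is dominated by $\noop{\nu(\varphi^*)(i,k)\circ(\Id-M_{s_k\rh^k_i})}$, which tends to zero by the first limit applied to $\varphi^*$.
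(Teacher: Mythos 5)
Your proposal is correct and follows essentially the same route as the paper: the norm bound via Cauchy--Schwarz in the auxiliary variable plus the Fourier-multiplier estimate (your substitution $c=-b/s_k+p^i(t/\rh^k_i)$ just recovers the pre-change-of-variable form the paper itself uses), the adjoint identity from $\widehat{\va^*}=\overline{\hat\va}$, the support/vanishing argument for (3) using that $p^i$ is non-constant of degree $\#L(i)\geq 1$ (a point the paper leaves implicit and you rightly spell out), and density to pass to all of $C^*(\R^2)$. The only cosmetic difference is that you get compactness from a Hilbert--Schmidt kernel estimate rather than from norm-convergence of the integral of rank-one projections, which is equally valid.
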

\begin{proof}
1.) It suffices to prove this for $ \va\in \D(\R^2) $. We have that

\begin{eqnarray}
\nonumber \no{\nu(\va)(i,k) \xi}2^2&=& \int_{\R}\vert
\int_\R\int_{\R}\hat \va^2(s-t,-\frac{b}{s_k}) \xi(t)
\overline{\et(s_k p^i(\frac{t} {\rh^k_i})+ b)}dt \et(s_k
p^i(\frac{s}
{\rh^k_i})+ b)db \vert^2ds \\
\nn &=&\int_{\R}\vert \int_\R(\hat \va^2(-,-\frac{b}{s_k})\ast (\xi
\overline{\et_{k,b}}))(s) \et(s_k p^i(\frac{s}
{\rh^k_i})+ b)db \vert^2ds \\
\nn&&(\text{where } \et_{k,b}(t):=\et(s_k p^i(\frac{t}
{\rh^k_i})+ b), t\in\R)\\
\nonumber &\leq& \int_{\R^2}\vert (\hat \va^2(-,-\frac{b}{s_k})\ast
(\xi \overline{\et_{k,b}}))(s)\vert^2
db ds \\
\nonumber &\leq&\no{\va}{C^*(\R^2)}^2\int_{\R}\Vert\xi
\et_{k,b}\Vert_2^2
{db}\\
\nonumber &=&  \no{\va}{C^*(\R^2)}^2\int_{\R^2}\vert \xi(t)\vert^2
\vert \et(s_kp^i(\frac{t}{\rh^k_i})+b)\vert^2
{db}dt\\
\nonumber &=&  \no{\va}{C^*(\R^2)}^2\no\xi2^2.
\end{eqnarray}
 Furthermore, since $ \nu(\va)(i,k) $ is an integral of rank one operators,  $
\nu(\va)(i,k) $ must be compact. Hence for every $ \va\in
C^*(\R^2) $, $\nu(\va)(i,k)$  is  a compact operator bounded by $
 \no \va{C^*(\R^2)}$.

2.) Let $ \va\in \S(\R^2) $. Then $ \ol{\hat \va}=\hat {\va^*} $ and
so\begin{eqnarray}
 \nn \nu(\va)(i,k)^*&=&(s_k\int_{\R^2} \hat \va(u)P_{i,k,u}{du})^*=
s_k\int_{\R^2} \ol{\hat \va(u)}P_{i,k,u}{du}\\
\nn  &=&s_k\int_{\R^2} \hat {\va^*}(u)P_{i,k,u}{du}=\nu(\va^*)(i,k).
\end{eqnarray}
3.) Take now $ \va\in \S(\R^2) $, such that $ \hat\va$ has a compact
support. We denote by $[-s_k\rho_i^k,s_k\rho_i^k]^c$ the set
$\R\setminus [-s_k\rho_i^k,s_k\rho_i^k]$. By (\ref{diffnuk})  for
any $ \xi\in L^2(\R), s\in\R $ we have

\begin{eqnarray}
\nn  && \nn \nu(\va)(i,k)\circ (\Id-M_{s_k\rho_i^k})(\xi)(s)\\
\nn &=&\int_{ [-s_k\rho_i^k,s_k\rho_i^k]^c}\int_{\R}\hat
\va^2(s-t,-\frac{b}{s_k}+p^i(\frac{t} {\rh^k_i}))
 \overline{\et(b)} \et(s_k(p^i(\frac{s}{\rh^k_i})
-p^i(\frac{t} {\rh^k_i}))+b)db\xi(t) dt=0
\end{eqnarray}
since for $ k $ large enough $ \hat
\va^2(s-t,-\frac{b}{s_k}+p^i(\frac{t} {\rh^k_i}))=0 $ for any $
t\in [-s_k\rho_i^k,s_k\rho_i^k]^c$, $b\in \supp \et $, $s\in \R$. Hence $
\nu(\va(i,k))\circ (\Id-M_{s_k\rho_i^k})=0 $ for $ k $ large enough.
Since the mapping $ \nu $ is continuous, it follows that $
\lim_{k\to\iy}\noop{\nu(\va)(i,k)(\Id-M_{s_k\rho_i^k})}=0$ for all
$ \va\in C^*(\R^2)$ and every $ i\in C $.
%{\bf L. It was $\nu(\va(i,k))(\Id-M_{S_k})=0 $ before. But I don't understand
%why.}

Hence also
\begin{eqnarray}
 \nn  &&\lim_{k\to\iy}\noop{(\Id-M_{s_k\rho_i^k})\circ\nu(\va)(i,k)\circ
M_{s_k\rho_i^k}}\\
\nn  &=&\lim_{k\to\iy}\noop{(M_{s_k\rho_i^k}\circ\nu(\va^*)(i,k)\circ
(\Id-M_{s_k\rho_i^k})}\\
\nn  &\leq &\lim_{k\to\iy}\noop{\nu(\va^*)(i,k)\circ
(\Id-M_{s_k\rho_i^k})} =0.
\end{eqnarray}
\end{proof}

\begin{definition}\label{gencon}
\rm   Let Let $ A=(A(\ell)\in\K(\l2\R),\ell \in S_N^{gen}, A(0)\in
C^*(\R^2)) $ be a field of bounded
operators. We say that $ A $ satisfies the \textit{generic
condition} if for every properly converging sequence  with perfect
data $ (\pi_{\ell_k})_k\subset \hat G_N $ and for every limit point
$ \pi_{\ell^i}, i\in D,
 $ and for every adapted real sequence $ (s_k)_k
$
\begin{equation}\label{generic_condition}
 \lim_{k\to\iy}\noop{
 U(t^k_i)\circ A(\ell_k)\circ U(-t^k_i)\circ
M_{s_k}-A(\ell^i)\circ M_{s_k} }=0.
\end{equation}

%\end{definition}

%\begin{definition}\label{charcon}
\rm
%Let $ A=(A(\ell)\in\K(\l2\R),\ell \in S_N^{gen}, A(0)\in
%C^*(\R^2)) $  be a field of operators.
 $ A $ satisfies
the \textit{character  condition} if for every properly converging
sequence  with perfect data $ (\pi_{\ell_k})_k$, $\ell_k\in S_N^{gen}$
 and for every limit point  $\pi_{\ell^i}, i\in C,$
and for every adapted real sequence $ (s_k)_k$
\begin{eqnarray}
\nn \lim_{k\to\iy}\noop{
 U(t^k_i)\circ A(\ell_k)\circ
U(-t^k_i)\circ M_{s_k\rho_i^k}-\nu(A(0))(i,k)\circ M_{s_k\rho_i^k} }=0.
\end{eqnarray}

%\end{definition}
%\begin{definition}\label{inftycon}
%\rm  We say that the field $ A=(A(\ell)\in\K(\l2\R),\ell \in
%$S_N^{gen}, A(0)\in C^*(\R^2)) $

$A$ satisfies the \textit{infinity}
condition, if for any properly converging sequence $
(\pi_{\ell_k})$, $\ell_k\in S_N ^{gen}$, with perfect data we have that
\begin{eqnarray}
\nn \lim_{k\to\iy} \noop{A( \ell_k)\circ M_{T_k}} &=&0,
\end{eqnarray}
where $ T_k={\mathbb R}\setminus\left(\bigcup_{i=1}^m[t_i^k-s_k\rho_i^k,t_i^k+s_k\rho_i^k]\right)$,  and that
for every sequence $ (\ell_k)_k\subset S_N^{gen} $, for which the
sequence of orbits $ G_N\cdot {\ell_k} $ goes to infinity we also have
$$ \lim_{k\to\infty}
A(\ell_k)=0. $$
 \end{definition}
We can now define the operator field $ C^* $-algebra $ D^*_N $,
which will be the image of the Fourier transform of $ C^*(G_N)$.

\begin{definition}\label{defcstarN}
\rm   Let $ D^*_N $ be the space of all bounded operator fields
$A= (A(\ell))\in \K(\l2\R), \ell\in S _N^{gen},A(0)\in C^*(\R^2)$,
such that $ A $ and the adjoint  field $ A^* $ satisfy the
generic, the character and the infinity conditions. Let for $ A\in
D^*_N $
\begin{eqnarray}
 \nn \no A\iy &:=&\sup\{\noop{A(\ell)}, \|A(0)\|_{C^*(\R^2)}: \ell\in S_N^{gen}\}.
\end{eqnarray}

 \end{definition}

It is clear that $ D^*_N $ is a Banach space for the
norm $
\no{\cdot}\iy $, since the   generic, the character and the infinity
conditions are stable for the sum, for scalar multiplication  and limits of
sequences of  operator fields.

\begin{theorem}\label{csatisfies}
Let $ a\in C^*(G_N) $ and let $ A $ be the operator field defined by
$ A=F( a) $ as in Definition \ref{fourN}.
 Then $ A $
satisfies the generic, the character and the infinity conditions.
 \end{theorem}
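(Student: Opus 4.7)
The overall strategy is to pass to a dense subspace where $\pi_\ell(f)$ is concretely a kernel operator, verify each of the three conditions by explicit analysis of the kernels, and then close up by approximation. Concretely, the three conditions define a closed subspace of the bounded operator fields (they are each a statement that some operator-norm-valued limit vanishes, and everything is bounded by $\|a\|_{C^*(G_N)}$), so by a standard $3\varepsilon$ argument it suffices to prove the theorem for $f\in L^1_c(G_N)$. For such an $f$, Proposition \ref{cstindst} gives $\pi_\ell(f)$ as a kernel operator with kernel $\hat f^2(s-t,t\cdot\ell)$, and $\hat f^2\in C_c^\infty(\R\times\b^*)$ is compactly supported in both variables.

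For the \emph{generic condition}, fix $i\in D$. Using the cocycle relation for the $\R$-action, the operator $U(t_i^k)\pi_{\ell_k}(f)U(-t_i^k)$ is the kernel operator with kernel $\hat f^2(s-t,t\cdot(t_i^k\cdot\ell_k))$. By Proposition \ref{sequences}(1), $t_i^k\cdot\ell_k\to\ell^i$ in $V$, hence $t\cdot(t_i^k\cdot\ell_k)\to t\cdot\ell^i$ uniformly for $t$ in compact subsets of $\R$. Since $\ell^i$ is non-character, $t\mapsto t\cdot\ell^i$ is a non-constant polynomial whose coefficients eventually leave every compact set of $\b^*$, and the same holds uniformly in $k$ for $t\cdot(t_i^k\cdot\ell_k)$ by part (4a) of Proposition \ref{sequences}. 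Consequently the compact support of $\hat f^2$ confines both kernels (and hence their difference) to a fixed compact set in $(s,t)$ independent of $k$. Dominated convergence then gives Hilbert-Schmidt convergence of the difference of the two kernels to $0$, which dominates operator norm. The cutoff $M_{s_k}$ on the right is harmless.

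The \emph{infinity condition} follows from the same mechanism. For $t\in T_k$, Proposition \ref{sequences}(5) states $t\cdot\ell_k\to\infty$ in $\g_N^*$, and uniformly in $t\in T_k$ the point $t\cdot\ell_k$ eventually lies outside the support of $\hat f^2(s-\cdot,\cdot)$ (the argument here needs a mild uniform bookkeeping, but since $\hat f^2$ has compact support and the trajectories $t\mapsto t\cdot\ell_k$ are polynomials with controlled coefficients, the restriction of the kernel to $\R\times T_k$ vanishes for $k$ large). For the second half of the infinity condition, if $G_N\cdot\ell_k\to\infty$ then one checks that every $t\cdot\ell_k$ leaves every compact set in $\b^*$ uniformly in $t$ on compacta, again forcing the kernel to vanish.

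The \emph{character condition} is the delicate step and will be the main obstacle. One proceeds exactly as in formula (\ref{seqik}): write
\[
\bigl(U(t_i^k)\pi_{\ell_k}(f)U(-t_i^k) - \nu(F(f)(0))(i,k)\bigr)\circ M_{s_k\rho_i^k}\xi(s)
\]
as a sum of two integrals. The first integrand contains the difference
\[
\hat f^2\!\bigl(s-t,(t+t_i^k)\cdot\ell_k\bigr)-\hat f^2\!\Bigl(s-t,-\tfrac{b}{s_k}+p^i\!\bigl(\tfrac{t}{\rho_i^k}\bigr),0,\ldots,0\Bigr),
\]
which is controlled by a Taylor expansion: Proposition \ref{sequences}(3)(d) guarantees that $(t_i^k+s\rho_i^k)\cdot p_k\to p^i(s)$ uniformly on compact sets, and the remaining coordinates of $(t_i^k+s\rho_i^k)\cdot\ell_k$ tend to $0$ because $\ell^i$ is a character. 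Since $t$ ranges over $[-s_k\rho_i^k,s_k\rho_i^k]$ and $b$ ranges over the support of $\eta$, the change of variables $t=t'\rho_i^k$ puts us in the regime where these Taylor estimates apply, with remainders of order $s_k^{-1}$ or better, and the Schwartz decay of $\eta$ and $\hat f^2$ in the first variable absorbs the kernels in $L^2$. The second integrand involves $\eta(b)-\eta(s_k(p^i(s/\rho_i^k)-p^i(t/\rho_i^k))+b)$, which is again handled by a Taylor expansion of $\eta$ combined with its rapid decay, exactly as the term $w_\lambda$ was handled in the proof of Theorem \ref{characterisation}. The hard part is keeping track of the precise scaling relations between $s_k$, $\rho_i^k$, $t_i^k$ and $a_j^k$ from Proposition \ref{sequences} so that the remainders converge to $0$ uniformly in $\xi$; this mirrors but complicates the Heisenberg estimates, since the polynomial $p^i$ can have degree greater than one.
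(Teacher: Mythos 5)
Your architecture coincides with the paper's: reduce by density to $f\in L^1_c(G_N)$, view $\pi_\ell(f)$ as the kernel operator of Proposition \ref{cstindst}, and check the three conditions on kernels; the infinity condition and, modulo the uniformity point below, the generic condition go through essentially as you describe. The genuine gap is the character condition, which is the real content of the theorem and which you leave at the level of a plan. The one concrete mechanism you do propose does not work as stated: after the substitution $t=t'\rho_i^k$ the variable $t'$ still ranges over $[-s_k,s_k]$, which is not a fixed compact set, so the uniform-on-compacta convergence $(t_i^k+t'\rho_i^k)\cdot p_k\to p^i(t')$ of Proposition \ref{sequences}(3)(d) cannot be invoked directly, and the claim that the remainders are ``of order $s_k^{-1}$ or better'' is unjustified. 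The missing step is the localization: using the factorization $p_k(t+t_i^k)=c_k\prod_j(t+t_i^k-a_j^k)$ together with the adaptedness relations $s_k\rho_i^k/|a_j^k-t_i^k|\to 0$ for $j\notin L(i)$, one must first show that $|p_k(t+t_i^k)|$ leaves the compact support of $\hat f^2$ when $R\rho_i^k<|t|\le s_k\rho_i^k$, so that the $t$-integration in (\ref{seqik}) can be cut down to $[-R\rho_i^k,R\rho_i^k]$; only then do the Taylor and uniform-convergence estimates apply. Moreover, the error coming from $\eta(b)-\eta\bigl(s_k(p^i(s/\rho_i^k)-p^i(t/\rho_i^k))+b\bigr)$ (your analogue of $w_\lambda$) has size comparable to $s_k/\rho_i^k$, not $s_k^{-1}$, and vanishes only because adaptedness requires $s_k/\rho_i^k\to 0$; and the term carrying the derivative coordinates $-p_k'(t+t_i^k),\dots$ needs the quantitative bound $|p_k'(t+t_i^k)|\le \sigma(\rho_i^k)^{-1}(|t|/\rho_i^k+1)^{d-1}$ (and its higher-order analogues), not the remark that ``the remaining coordinates tend to $0$ because $\ell^i$ is a character''. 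Since none of these estimates is carried out, the character condition remains unproved in your proposal.

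A secondary point: for the generic condition you assert that the kernels are confined to a fixed compact set in $(s,t)$ ``uniformly in $k$ by part (4a)''. Statement (4a) concerns convergent sequences $t_k\cdot\ell_k$ and does not by itself yield this uniformity; the paper derives it quantitatively from $|p_k(t+t_i^k)|\ge |b_i|\prod_{j\in J(i)}\bigl|1-s_k/|t_i^k-a_j^k|\bigr|\prod_{j\notin J(i)}|t+t_i^k-a_j^k|$ together with $s_k/|t_i^k-a_j^k|\to 0$ for $j\in J(i)$. A compactness-and-contradiction argument based on Proposition \ref{sequences}(4) could be made to work, but it again uses the adaptedness relations and should be written out rather than asserted.
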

\begin{proof}
For the infinity condition, it suffices to remark that for any $
f\in L^1_c(G_N) $, and $k$ large enough, we have that $ \hat
f^2(s-t,t\cdot \ell_k)=0 $ for every $ s\in \R$, $t\in T_k $ and so
$ \pi_{\ell_k}(f)\circ M_{T_k}=0 $. If $
G_N\cdot \ell_k $ goes to infinity in the orbit space, then $
{\R\cdot\ell_k} $ is outside any given compact subset $ K\subset
\g_N^* $ and so $ \hat f^2(s-t,t\cdot \ell_k)=0, s,t\in\R $ and
hence $ \pi_{\ell_k}(f)=0 $ for $ k $ large enough. Using the
density argument, we see that the infinity
 condition is satisfied for every element in the Fourier transform of $
C^*(G_N) $.

For the generic condition,  let $ (\ell_k)_k $ be  a properly
converging sequence in $S_N$ with perfect data. Take $ i\in D $.
Then for an adapted sequence $ (s_k)_k $, $ f\in L^1_c(G_N) $ and $
\xi\in\lt, s\in\R$, we have that
\begin{eqnarray}\label{obs}
\nonumber &&  (U(t^k_i)\circ \pi_{\ell_k}(f)\circ U(-t^k_i)\circ
M_{s_k}-\pi_{\ell^i}(f)\circ
M_{s_k} )\xi(s) \\
 &=&  \int_{-s_k}^{s_k} \left(\hat f^2(s-t,(t+t_i^k)\cdot\ell_k))-\hat
f^2(s-t,t\cdot\ell^i))\right)\xi(t)dt.
%\nonumber &=&  \int_{-s_k}^{s_k} \left(\hat
%f^2(s,(t+t_i^k)\cdot\ell_k))-\hat
%f^2(s,t\cdot\ell^i))\right)\xi(t)dt
%\nonumber && \text{for some }R>0 \text{ for all } k \quad(\text{since }f\in
%L^1_c(G_N)).
\end{eqnarray}
Let $p_k$ and $q_i$ be the polynomials corresponding to $\ell_k$ and
$\ell^i$ respectively, i.e., $p_k(t)=\hat\ell_k(t)$ and
$q_i(t)=\hat\ell^i(t)$. Since
$\displaystyle\lim_{k\to\infty}\frac{s_k}{|t_i^k-a_j^k|}\to 0$,
$j\in J(i)$, there exists $R>0$ such that
$(s-t,(t+t_i^k)\cdot\ell_k)=(s-t,p_k(t+t_i^k),
-p_k'(t+t_i^k),\ldots)$ is out of the support of ${\hat f}^2$ if
$t\in[-s_k,s_k]$ and $|t|>R$. In fact
%One can do
%this in a way you did  it in the paper with Rob: let $J_i$ denotes
%the set of all $j=1,\ldots,d$ such that $|t_i^k-a_k^j|\to \infty$.
if $t\in[-s_k,s_k]$ we have
\begin{eqnarray*}
|p_k(t+t_i^k)|&=&|c_k\prod_{j=1}^d(t+t_i^k-a_j^k)|=|c_k\prod_{j\in
J(i)}|t_i^k-a_j^k|
\prod_{j\in J(i)}|\frac{t}{t_i^k-a_j^k}+1|\prod_{j\notin J(i)}|t+t_i^k-a_j^k|\\
 &\geq& |b_i|\prod_{j\in J(i)}|1-\frac{s_k}{|t_i^k-a_j^k|}|\prod_{j\notin
 J(i)}|t+t_i^k-a_j^k|,
\end{eqnarray*}
where $b_i$ is the leading coefficient of the polynomial $q_i$,
giving the statement.
%If $\frac{s_k}{|t_i^k-a_j^k|}\to 0$ for any
%$j\in J_i$ then we can find $C>0$ such that the last expression is
%big enough for $|t|>C$ in order to have $p_k(t+t_i^k)$ in the
%support of the compactly supported ${\hat f}^2$. {\bf Question.}
%Does this condition follow from the condition we have on $s_k$? Or
%maybe there is another way to prove this?
Thus by (\ref{obs})
\begin{eqnarray*}
&&(U(t^k_i)\circ \pi_{\ell_k}(f)\circ U(-t^k_i)\circ
M_{s_k}-\pi_{\ell^i}(f)\circ
M_{s_k} )\xi(s) \\
 &=&  \int_{-R}^{R} \left(\hat
f^2(s-t,(t+t_i^k)\cdot\ell_k))-\hat
f^2(s-t,t\cdot\ell^i))\right)\xi(t)dt
\end{eqnarray*}
for $k$ large enough. It is clear now that $ U(t^k_i)\circ \pi_{\ell_k}(f)\circ
U(-t^k_i)\circ M_{s_k}-\pi_{\ell^i}(f)\circ M_{s_k}  $ converges
to $0 $ with respect to the Hilbert-Schmidt norm and hence in the
operator norm.
%and $U(t^k_i)\circ \pi_{\ell_k}\circ U(-t^k_i)\circ
%(\Id-M_{s_k})=0$ for $ k $ large enough.

Let $a\in C^*(G_N)$. Then for any $\varepsilon>0$ there exists $f\in
L^1_c(G_N)$ such that $||\pi(f)-\pi(a)||_{op}\leq
||f-a||_{C^*(G_N)}<\varepsilon$ for any representation $\pi$ of
$C^*(G_N)$. Thus  for $A(\ell)=\pi_{\ell}(a)$, $\ell\in S_N^{gen}$
we have
\begin{eqnarray*}
&& \|U(t^k_i)\circ A(\ell_{k})\circ U(-t^k_i)\circ
M_{s_k}-A(\ell^i)\circ M_{s_k}\|_{\text{op}} =\|U(t^k_i)\circ
(A(\ell_{k})-\pi_{l_k}(f))\circ U(-t^k_i)\|_{\text{op}}\\ && +
\|U(t^k_i)\circ \pi_{\ell_k}(f)\circ U(-t^k_i)\circ
M_{s_k}-\pi_{\ell^i}(f)\circ
M_{s_k}\|_{\text{op}}+\|(\pi_{\ell^i}(f)-A(\ell^i))\|_{\text{op}}\to
0,
\end{eqnarray*}
%$A(\ell_i) $ with respect to the operator norm and
%furthermore
%$\lim_{k\to\iy}U(t^k_i)\circ A(\ell_k)\circ U(-t^k_i)\circ
%(\Id-M_{s_k})
%=0$.
and hence  $A$ satisfies the generic condition.

Choose now $ i\in C $. By (\ref{seqik}), for $ k\in \N,
s\in\R,\xi\in\lt,f\in L_c^1(G_N) $
\begin{eqnarray}
\nn && U(t^k_i)\circ \pi_{\ell_k}( f)\circ U(-t^k_i)\circ
M_{s_k\rho_i^k}-\nu (F({f}(0)))(i,k)\circ
M_{s_k\rho_i^k} (\xi)(s)\\
\nn &=&\int_{-s_k\rho_i^k}^{s_k\rho_i^k}(\int_{\R}\hat
f^2(s-t,(t+t^k_i)\cdot \ell_k))-\hat
f^2(s-t,-\frac{b}{s_k}+p^i(\frac{t}
{\rh^k_i}),0\ldots,0)\\
&&\nn
 \overline{\et(b)} \et(s_k(p^i(\frac{s}{\rh^k_i})
-p^i(\frac{t} {\rh^k_i}))+b)db)\xi(t) dt+\\
&&\nn
\int_{-s_k}^{s_k}(\int_{\R}\hat f^2(s-t,(t+t^k_i)\cdot
\ell_k))
 \overline{\et(b)} (\eta(b)-\et(s_k(p^i(\frac{s}{\rh^k_i})
-p^i(\frac{t} {\rh^k_i}))+b))db)\xi(t) dt.
\end{eqnarray}
In order to show that
\begin{equation}\label{char}
\|U(t^k_i)\circ \pi_{\ell_k}( f)\circ U(-t^k_i)\circ
M_{s_k\rho_i^k}-\nu (F({f}(0)))(i,k)\circ M_{s_k\rho_i^k}\|\to
0, \  k\to\infty,
\end{equation}
 consider
\begin{eqnarray*}
q(k,i)(s,b)&=&\int_{-s_k\rho_i^k}^{s_k\rho_i^k}(\hat
f^2(s-t,(t+t^k_i)\cdot \ell_k))-\hat
f^2(s-t,-\frac{b}{s_k}+p^i(\frac{t}
{\rh^k_i}),0\ldots,0))\\
&&\et(s_k(p^i(\frac{s}{\rh^k_i}) -p^i(\frac{t}
{\rh^k_i}))+b))\xi(t) dt=u(k,i)+v(k,i),
\end{eqnarray*}
where
\begin{eqnarray*}
u(k,i)(s,b)&=&\int_{-s_k\rho_i^k}^{s_k\rho_i^k}(\hat
f^2(s-t,p_k(t+t^k_i), -p_k'(t+t^k_i),\ldots)-\hat
f^2(s-t,p_k(t+t^k_i),0,\ldots))\\
&&\et(s_k(p^i(\frac{s}{\rh^k_i}) -p^i(\frac{t}
{\rh^k_i}))+b))\xi(t) dt,\\
v(k,i)(s,b)&=&\int_{-s_k\rho_i^k}^{s_k\rho_i^k}(\hat
f^2(s-t,p_k(t+t^k_i),0,\ldots)-\hat
f^2(s-t,-\frac{b}{s_k}+p^i(\frac{t}
{\rh^k_i}),0\ldots))\\
&&\et(s_k(p^i(\frac{s}{\rh^k_i}) -p^i(\frac{t}
{\rh^k_i}))+b))\xi(t) dt.
\end{eqnarray*}
and let
$$w(k,i)(s)=\int_{\R}\int_{-s_k}^{s_k}(\int_{\R}\hat f^2(s-t,(t+t^k_i)\cdot
\ell_k))\overline{\eta(b)} (\eta(b)-\et(s_k(p^i(\frac{s}{\rh^k_i})
-p^i(\frac{t} {\rh^k_i}))+b))db)\xi(t) dt.$$

Our aim is to prove that for $p(s,b)=1_{{\mathbb R}\times\supp\eta}(s,b)$
\begin{equation}\label{conv}
\|u(k,i)p\|_2\leq \omega_k||\xi||_2,
\|v(k,i)p\|_2\leq\delta_k||\xi||_2\text{ and }\|w(k,i)\|_2\leq r_k||\xi||_2
\end{equation}
with $\omega_k,\delta_k, r_k\to 0$ as $k\to\infty$. This will imply $$\int_{\R}|
\int_{\R}
(q(k,i)(s,b))\overline{\eta(b)}db|^2ds\leq\|q(k,i)\|_2^2\|\eta\|_2\leq
(\omega_k+\delta_k)^2\|\xi\|_2^2$$
which together with $\|w(k,i)\|_2\leq r_k||\xi||_2$ will give
(\ref{char}).

To see this  we note first that since
$\frac{s_k\rho_i^k}{|a_k^j-t_i^k|}\to 0$ if $j\notin L(i)$,
we have that for $|t|\leq s_k\rho_i^k$
\begin{eqnarray*}
|p_k(t+t_i^k)|&=&|c_k\prod_{j=1}^d(t+t_i^k-a_j^k)|=|c_k\prod_{j}
|t_i^k-a_j^k|
\prod_{j\notin L(i)}|\frac{t}{t_i^k-a_j^k}+1|
\prod_{j\in L(i)}|\frac{t}{t_i^k-a_j^k}+1)|\\
 &\geq& \sigma\prod_{j\notin
L(i)}|1-\frac{s_k\rho_i^k}{|t_i^k-a_j^k|}|\prod_{j\in L(i)}
|\frac{|t|}{\rho_i^k}\frac{\rho_i^k}{|t_i^k-a_j^k|}-1|
\end{eqnarray*}
for some $\sigma>0$. Thus for large $k$ there exists $R>0$ such
that
$\hat{f}^2(s-t,p_k(t+t_i^k),-p_k'(t+t_i^k),\ldots)=0$
and $\hat{f}^2(s-t,p_k(t+t_i^k),0,\ldots)=0$ if $|t|<s_k\rho_i^k$
and $|t|>R\rho_i^k$. Hence the integration over the
interval $[-s_k,s_k]$ can be replaced by the integration over
$[-R\rho_i^k,R\rho_i^k]$ in the expression for $u(k,i)$,
$v(k,i)p$ and $w(k,i)$. Since $f\in L^1_c(G_N)$ we have that
$$|\hat{f}^2(s-t,p_k(t+t_i^k),0,\ldots)-\hat{f}^2(s-t,p^i(\frac{t}{\rho_i^k}
)-\frac{b}{s_k},0,\ldots)|\leq
C|p_k(t+t_i^k)-p^i(\frac{t}{\rho_i^k})+\frac{b}{s_k}|\frac{1}{1+|t-s|^m}$$ for
some constant $C>0$ and $m\in {\mathbb N}$, $m\geq 2$ . This gives
\begin{eqnarray*}
&&\|v(k,i)p\|_2^2
=C\int_{\R^2}\left
|\int_{-R\rho_i^k}^{R\rho_i^k}\eta(s_k(p^i(\frac{s}{\rho_i^k})-
p^i(\frac{t}{\rho_i^k})+b)\right.\\
&&\quad\quad\left.(p_k(t+t_i^k)-p^i(\frac{t}{\rho_i^k})+\frac{b}{s_k}
)\frac{\xi(t)}{1+|t-s|^m}dt\right |^2ds db\\
&&\leq
\frac{3C}{s_k^2}\int_{\R^2}\int_{-R\rho_i^k}^{R\rho_i^k}\left
|\eta(s_k(p^i(\frac{s}{\rho_i^k})-
p^i(\frac{t}{\rho_i^k})+b)\right.\\
&&\quad\quad\left.(b+s_k(p^i(\frac{s}{\rho_i^k})-
p^i(\frac{t}{\rho_i^k}))\frac{\xi(t)}{1+|t-s|^m}dt \right|^2ds db\\
&&+3C\int_{\R^2}\int_{-R\rho_i^k}^{R\rho_i^k}\left
|\eta(s_k(p^i(\frac{s}{\rho_i^k})-
p^i(\frac{t}{\rho_i^k}))+b)
(p_k(t+t_i^k)-p^i(\frac{t}{\rho_i^k}))\frac{\xi(t)}{
1+|t-s|^m}dt\right|^2 ds db\\
&&+ 3C\int_{\R^2}\int_{-R\rho_i^k}^{R\rho_i^k}\left
|\xi(t)\eta(s_k(p^i(\frac{s}{\rho_i^k})-
p^i(\frac{t}{\rho_i^k}))+b)(p^i(\frac{t}{\rho_i^k})-
p^i(\frac{s}{\rho_i^k}))\frac{1}{1+|t-s|^m}dt\right|^2ds db\\
&&\leq
\frac{C_1}{s_k^2}\int_{\R^2}\int_{-R\rho_i^k}^{R\rho_i^k}\left
|\xi(t)\tilde\eta(s_k(p^i(\frac{s}{\rho_i^k})-
p^i(\frac{t}{\rho_i^k}))+b)\right|^2\frac{1}{1+|t-s|^m}dt ds db\\
&&\quad\quad(\text{where }\tilde\eta(b)=b\eta(b))\\
&&+C_1\int_{\R^2}\int_{-R\rho_i^k}^{R\rho_i^k}\left
|\xi(t)\eta(s_k(p^i(\frac{s}{\rho_i^k})-
p^i(\frac{t}{\rho_i^k}))+b)(p_k(t+t_i^k)-p^i(\frac{t}{\rho_i^k}))\right|^2\\
&&\quad\quad\frac{1}{1+|t-s|^m}dt ds db\\
&&+ \int_{\R^2}\int_{-R\rho_i^k}^{R\rho_i^k}\left
|\xi(t)\eta(s_k(p^i(\frac{s}{\rho_i^k})-
p^i(\frac{t}{\rho_i^k}))+b)(p^i(\frac{t}{\rho_i^k})-
p^i(\frac{s}{\rho_i^k}))\right|^2 \\
&&\quad\quad\frac{1}{1+|t-s|^m}dt ds db
\end{eqnarray*}
\begin{eqnarray*}
&\leq&
\frac{C_2}{s_k^2}\|\tilde\eta\|_2^2\|\xi\|_2^2+C_2\|\eta\|_2^2\int_{-R}^{R}
|\xi(t\rho_i^k)|^2|p_k(t\rho_i^k+t_i^k)-p^i(t)|^2\rho_i^kdt\\
&+&C_3\|\eta\|_2^2\int_{\R}\int_{-R\rho_i^k}^{R\rho_i^k}
|\xi(t)|^2\left|p^i(\frac{t}{\rho_i^k})-p^i(\frac{s}{\rho_i^k})\right|^2\frac{1}
{1+|t-s|^m}dtds
\end{eqnarray*}
As $p_k(t\rho_i^k+t_i^k)-p^i(t)$ converges to $0$ uniformly on
each compact,
$$\int_{-R}^R|\xi(t\rho_i^k)|^2|p_k(t\rho_i^k+t_i^k)-p^i(t)|^2\rho_i^kdt\leq
r_k\|\xi\|_2^2$$ with $r_k\to 0$ as $k\to \infty$. Moreover,
$\displaystyle
p^i(\frac{t}{\rho_i^k})-p^i(\frac{s}{\rho_i^k})=\frac{t-s}{\rho_i^k}\sum_l
\alpha_l(\frac{t}{\rho_i^k})\beta_l(\frac{t-s}{\rho_i^k})$ for
some finite number of polynomials $\alpha_l$, $\beta_l$ which do
not depend on $k$. Thus
\begin{eqnarray*}&&\int_{\R}\int_{-R\rho_i^k}^{R\rho_i^k}
|\xi(t)|^2\left|p^i(\frac{t}{\rho_i^k})-p^i(\frac{s}{\rho_i^k})\right|^2\frac{1}
{1+|t-s|^m}dtds\\
&=&\int_{\R}\int_{-R\rho_i^k}^{R\rho_i^k}|\xi(t)|^2\left|\frac{t-s}{\rho_i^k}
\sum_l
\alpha_l(\frac{t}{\rho_i^k})\beta_l(\frac{t-s}{\rho_i^k})\right|^2\frac{1}{
1+|t-s|^m}dtds\\
&\leq&
\frac{C_4}{(\rho_i^k)^2}\int_{-R\rho_i^k}^{R\rho_i^k}|\xi(t)|^2dt\leq\frac{C_4}{
(\rho_i^k)^2}\|\xi\|_2^2
\end{eqnarray*}
for a properly chosen $m$. It follows now that
$$||v(k,i)p||_2\leq \delta_k\|\xi\|_2$$
for some $\delta_k\to 0$ as $k\to\infty$.

For $w(k,i)$ we have
\begin{eqnarray*}
\|w(k,i)\|^2&=& \int_{\R}\left|\int_{-R\rho_i^k}^{R\rho_i^k}\int_R
\hat
f^2(s-t,(t+t_i^k)\cdot\ell_k)\overline{\eta(b)}\right.\\
&&\left.(\eta(b)-\et(s_k(p^i(\frac{s}{\rh^k_i})
-p^i(\frac{t} {\rh^k_i}))+b))\xi(t) dbdt\right|^2ds\\
&\leq& C\|\eta\|^2_2\int_{\R}\left|\int_{-R\rho_i^k}^{R\rho_i^k}|s_k(p^i(\frac{s}{\rh^k_i})
-p^i(\frac{t} {\rh^k_i}))|\frac{1}{1+|t-s|^m}|\xi(t)| dt\right|^2ds\\
&\leq& C\|\eta\|^2_2\int_R\int_{-R\rho_i^k}^{R\rho_i^k} |s_k(p^i(\frac{s}{\rh^k_i})
-p^i(\frac{t} {\rh^k_i}))|^2\frac{1}{1+|t-s|^m}|\xi(t)|^2 dtds
\end{eqnarray*}
for some constant $C$. Then using the previous arguments we get
$$\|w(k,i)\|^2\leq \frac{Ds_k^2}{(\rho_i^k)^2}\|\xi\|^2_2\|\eta\|^2_2.$$
As $\frac{s_k}{\rho_i^k}\to 0$ we get the desired inequality for $w(k,i)$.

To prove the  inequality for $u(k,i)$ we have as in the previous
case that
\begin{eqnarray*}
&&|\hat{f}^2(s-t,p_k(t+t_i^k),-p_k'(t+t_i^k),\ldots)-\hat{f}^2(s-t,p_k(t+t_i^k),0
,\ldots)|\\&&\leq C(\sum_{n=1}^{N-2}
|p_k^{(n)}(t+t_i^k)|^2)^{1/2}\frac{1}{1+|t-s|^m} \end{eqnarray*} for
some constant $C>0$ and $m\in {\mathbb N}$, $m\geq 2$; here
$p_k^{(n)}$ denotes the $n$-th derivative of $p_k$. For $n=1$ we
have
$$|p_k'(t+t_i^k)|=|c_k\prod_{j}(t_i^k-a_k^j)|\sum_l\frac{1}{(t_i^k-a_k^l)}\prod
_{j\ne l}
(\frac{t}{t_i^k-a_k^j}+1) |\leq
\sigma\frac{1}{\rho_i^k}\left(\frac{|t|}{\rho_i^k}+1\right)^{d-1}$$
for some constant $\sigma>0$. Similar
inequalities hold  for higher order derivatives
$p_k^{(n)}(t+t_i^k)$ which show that
\begin{eqnarray*}
&&\|u(k,i)\|_2^2=\\ &=&
\int_{\R^2}|\int_{-R\rho_i^k}^{R\rho_i^k}(\hat
f^2(s-t,p_k(t+t^k_i), -p_k'(t+t^k_i),\ldots)-\hat
f^2(s-t,p_k(t+t^k_i),0,\ldots))\\
&& \et(s_k(p^i(\frac{s}{\rh^k_i}) -p^i(\frac{t}
{\rh^k_i}))+b))\xi(t)
dt|^2dbds\\
&\leq& \int_{\R^2}|\int_{-R\rho_i^k}^{R\rho_i^k}C(\sum_{n=1}^{N-2}
|p_k^{(n)}(t+t_i^k)|^2)^{1/2}\frac{1}{1+|t-s|^m}\et(s_k(p^i(\frac{s}{\rh^k_i})
-p^i(\frac{t} {\rh^k_i}))+b))\xi(t)
dt|^2dbds\\
&\leq&
\frac{1}{\rho_i^k}\int_{\R^2}|\int_{-R\rho_i^k}^{R\rho_i^k}|p\left(\frac{|t|}{
\rho_i^k}\right)|\frac{1}{1+|t-s|^m}\et(s_k(p^i(\frac{s}{\rh^k_i})
-p^i(\frac{t} {\rh^k_i}))+b))\xi(t)
dt|^2dbds\\
&\leq &\frac{C'}{\rho_i^k}\|\eta\|_2^2\|\xi\|_2^2
\end{eqnarray*}
for a polynomial $p$.
Thus we get the required inequality for $u(k,i)$ and hence
\begin{eqnarray}
 \nn \lim_{k\to\iy}\noop{ U(t^k_i)\circ \pi_{\ell_k}( f)\circ U(-t^k_i)\circ
M_{s_k\rho_i^k}-\nu (F({f}(0)))(i,k)\circ M_{s_k\rho_i^k}}
&=&0.
\end{eqnarray}

%Therefore,  we see that
%\begin{eqnarray}
%\nonumber &&\noop{ U(t^k_i)\circ \pi_{\ell_k}( f)\circ
%U(-t^k_i)-\nu (\tilde{f}(0))(i,k) } \\
%\nonumber &\leq&  \noop{ U(t^k_i)\circ \pi_{\ell_k}( f)\circ
%U(-t^k_i)\circ M_{s_k\rho_i^k}-\nu (\tilde{f}(0))(i,k)\circ
%M_{s_k\rho_i^k}} \\
%\nn  &+&\noop{ U(t^k_i)\circ \pi_{\ell_k}( f)\circ (\Id-
%M_{t_i^k,s_k\rho_i^k})-\nu (\tilde{f}(0))(i,k)\circ
%(\Id-M_{s_k\rho_i^k})}
%\end{eqnarray}
%and so , by
%(\ref{pikjcom}) and the infinity condition,
%\begin{eqnarray}
% \nn  \lim_{k\to\iy}\noop{ U(t^k_i)\circ \pi_{\ell_k}( f)\circ
%U(-t^k_i)-\nu (\tilde{f}(0))(i,k) } &=&0.
%\end{eqnarray}
To show now that the character condition holds for the fields
$A\in \widehat {C^*(G_N)}$ we use again
 the  density of $ L^1_c(G_N) $ in $ C^*(G_N) $.

\end{proof}

\begin{corollary}\label{ahom}
Let $ (\pi_{\ell_k})_k $ be a properly converging sequence in $
\widehat G_N $ with perfect data $ ((t^k_i)_k, (\rh_i^k), (s^k_i)) $.
 Let $ i\in C $. Then for every $ \va,\ps\in C^*(\R^2) $ we have that
\begin{eqnarray}
 \nn \lim_{k\to\iy}\noop{\nu(\va)(i,k)\circ \nu(\ps)(i,k)-\nu(\va\ps)(i,k)}
&=&0.
\end{eqnarray}

 \end{corollary}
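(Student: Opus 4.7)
The plan is to reduce to $\varphi=\rho(f)$, $\psi=\rho(g)$ with $f,g\in L^1_c(G_N)$, and then to combine the character condition of Theorem~\ref{csatisfies} with the cut-off estimates of Proposition~\ref{pikjcom}(3). Since $L^1_c(G_N)$ is dense in $C^*(G_N)$ by Definition~\ref{fourztrcompsupp}, the image $\rho(L^1_c(G_N))$ is dense in $C^*(\R^2)$, and since each $\nu(\cdot)(i,k)$ is contractive by Proposition~\ref{pikjcom}(1), a standard three-term approximation reduces the corollary to this dense case.

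For such $f,g$ one has $\rho(f*g)=\varphi\psi$. Introduce the shorthand $P_k := U(t_i^k)\pi_{\ell_k}(f)U(-t_i^k)$, $Q_k := U(t_i^k)\pi_{\ell_k}(g)U(-t_i^k)$, $R_k := P_kQ_k=U(t_i^k)\pi_{\ell_k}(f*g)U(-t_i^k)$ and $M_k := M_{s_k\rho_i^k}$. Applying Theorem~\ref{csatisfies} to $F(f)$, $F(g)$ and $F(f*g)$ yields
\begin{eqnarray*}
\|P_kM_k - \nu(\varphi)(i,k)M_k\|_{\rm op} &\to& 0,\\
\|Q_kM_k - \nu(\psi)(i,k)M_k\|_{\rm op} &\to& 0,\\
\|R_kM_k - \nu(\varphi\psi)(i,k)M_k\|_{\rm op} &\to& 0.
\end{eqnarray*}
Proposition~\ref{pikjcom}(3) gives $\nu(h)(i,k)(\Id-M_k)\to 0$ for every $h\in C^*(\R^2)$, and the adjoint version, obtained via Proposition~\ref{pikjcom}(2), gives also $(\Id-M_k)\nu(h)(i,k)\to 0$. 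Combining these two ingredients one obtains the cutoff-free estimates $\nu(\varphi)(i,k)-P_kM_k \to 0$, $\nu(\psi)(i,k)-Q_kM_k \to 0$, and $\nu(\varphi\psi)(i,k)-R_kM_k \to 0$ in operator norm.

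Using uniform boundedness of $\|P_k\|,\|Q_k\|$ (by $\|f\|_{C^*(G_N)},\|g\|_{C^*(G_N)}$) and of $\|\nu(h)(i,k)\|$, two successive replacements give
$$\nu(\varphi)(i,k)\nu(\psi)(i,k) = P_kM_kQ_kM_k + o(1) = P_kQ_kM_k - P_k(\Id-M_k)Q_kM_k + o(1).$$
The delicate step, and the main obstacle, is the cross term $P_k(\Id-M_k)Q_kM_k$: the character condition controls $Q_k$ only after multiplying by $M_k$ on the \emph{right}, so $(\Id-M_k)Q_kM_k$ is not a priori small. The trick is to substitute $Q_kM_k=\nu(\psi)(i,k)+o(1)$ (from the cutoff-free estimate above) and then invoke $(\Id-M_k)\nu(\psi)(i,k)\to 0$; boundedness of $\|P_k\|$ then eliminates the cross term. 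What remains is $P_kQ_kM_k=R_kM_k=\nu(\varphi\psi)(i,k)+o(1)$, establishing the corollary for $f,g\in L^1_c(G_N)$. The density argument outlined in the first paragraph then extends the conclusion to arbitrary $\varphi,\psi\in C^*(\R^2)$.
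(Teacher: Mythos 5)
Your proof is correct and follows essentially the same route as the paper: reduce by density to $\va=\rho(f)$, $\ps=\rho(g)$, apply the character condition of Theorem~\ref{csatisfies} to $F(f)$, $F(g)$, $F(f\ast g)$, and absorb the cut-off discrepancies using Proposition~\ref{pikjcom} (including the adjoint trick via part (2), which is exactly how the paper obtains its second limit in part (3)). The only difference is cosmetic bookkeeping: you package the estimates as ``cutoff-free'' statements $\nu(\va)(i,k)-P_kM_k\to 0$ and handle the cross term $P_k(\Id-M_k)Q_kM_k$ by substitution, whereas the paper distributes the same ingredients through one long triangle-inequality chain.
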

\begin{proof} Indeed, if we take
first $ \va,\ps $ in $ \S(\R^2) $, then we can choose $ f,g \in
\S(G_N)$, such that $ \rho(f)=\va,\rho(g)=\ps $ and
so, by Proposition \ref{pikjcom} and Theorem~\ref{csatisfies}, \begin{eqnarray}
\nonumber && \noop{\nu(i,k)(\va)\circ \nu(i,k)(\ps)-\nu(i,k)(\va\ps)} \\
\nonumber&\leq&\noop{(\nu(i,k)(\va)\circ
\nu(i,k)(\ps)-\nu(i,k)(\va\ps))\circ M_{s_k\rho_i^k} }\\
\nonumber&+&\noop{(\nu(i,k)(\va)\circ
\nu(i,k)(\ps)-\nu(i,k)(\va\ps))\circ (\I-M_{s_k\rho_i^k})
}\\
\nonumber &\leq&  \noop{(\nu(i,k)(\va)\circ \nu(i,k)(\ps)\\
\nn  &&-
(U(t^k_i)\circ\pi_{\ell_k}(f)\circ U(-t^k_i))\circ( U(t^k_i)\circ
\pi_{\ell_k}(g)\circ U(-t^k_i)))\circ M_{s_k\rho_i^k}
}\\
\nonumber &+&\noop{(U(t^k_i)\circ\pi_{\ell_k}(f\ast g)\circ
U(-t^k_i)-\nu(i,k)(\va\ps)) \circ M_{s_k\rho_i^k}}\\
\nonumber&+&\noop{(\nu(i,k)(\va)\circ
\nu(i,k)(\ps)-\nu(i,k)(\va\ps))\circ (\I-M_{s_k\rho_i^k})
}\\
\nonumber &\leq&  \noop{(\nu(i,k)(\va)-
(U(t^k_i)\circ\pi_{\ell_k}(f)\circ U(-t^k_i))\circ(\I-M_{s_k\rho_i^k})
\circ \nu(i,k)(\ps)\circ M_{s_k\rho_i^k}}\\
\nonumber&+&  \noop{
(\nu(i,k)(\va)-
(U(t^k_i)\circ\pi_{\ell_k}(f)\circ U(-t^k_i))\circ M_{s_k\rho_i^k}\circ
\nu(i,k)(\ps)\circ M_{s_k\rho_i^k}}\\
\nonumber &+&  \noop{ ((U(t^k_i)\circ\pi_{\ell_k}(f)\circ
U(-t^k_i))\circ( \nu(i,k)(\ps)- ( U(t^k_i)\circ
\pi_{\ell_k}(g)\circ U(-t^k_i))\circ M_{s_k\rho_i^k}}\\
\nonumber &+&\noop{(U(t^k_i)\circ\pi_{\ell_k}(f\ast g)\circ
U(-t^k_i)-\nu(i,k)(\va\ps))\circ M_{s_k\rho_i^k} }\\
\nonumber&+&\noop{(\nu(i,k)(\va)\circ
\nu(i,k)(\ps)-\nu(i,k)(\va\ps))\circ(\I-M_{s_k\rho_i^k})
}\to 0\text{ as }k\to\infty.
\end{eqnarray}
The usual density condition shows that the statement holds for all  $\varphi$, $\psi\in C^*(\R^2)$.
 \end{proof}

\begin{theorem}
The space $ D^*_N $ is  a $ C^* $-algebra, which is isomorphic
with $ C^*(G_N) $ for every $ N\in \N, N\geq 3 $.
 \end{theorem}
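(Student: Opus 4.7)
The plan is to mirror the roadmap of Theorem \ref{finalres}: first show that $D^*_N$ is a $C^*$-subalgebra of the natural ambient algebra of operator fields, then exhibit $F:C^*(G_N)\to D^*_N$ as an injective $*$-homomorphism, and finally invoke Stone--Weierstrass to conclude surjectivity.

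For the algebraic step, linearity of the generic, character and infinity conditions is immediate, and involution stability is built into Definition~\ref{defcstarN}. Multiplicative closure is the main work: given $A,B\in D^*_N$ and a properly converging sequence $(\pi_{\ell_k})_k$ with perfect data, one evaluates $U(t_i^k)A(\ell_k)B(\ell_k)U(-t_i^k)$ cut by $M_{s_k}$ for $i\in D$ and by $M_{s_k\rho_i^k}$ for $i\in C$. Inserting $U(-t_i^k)U(t_i^k)$ between the two factors and applying the generic condition for $i\in D$ (respectively the character condition combined with Corollary~\ref{ahom} for $i\in C$) reduces the product to $A(\ell^i)B(\ell^i)M_{s_k}$, respectively to $\nu(A(0)B(0))(i,k)M_{s_k\rho_i^k}$, modulo vanishing terms; here one also needs Proposition~\ref{pikjcom}(3) to see that $\nu(\varphi)(i,k)$ is essentially concentrated on $M_{s_k\rho_i^k}$. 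The infinity condition $A(\ell_k)B(\ell_k)M_{T_k}\to 0$ follows from boundedness of $A$ and the infinity condition on $B$. Closure of $D^*_N$ in the supremum norm is a straightforward $\varepsilon/3$-argument: if $F'\in D^*_N$ approximates $F$, then each of the three conditions for $F'$ transfers to $F$ up to an arbitrarily small error.

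Injectivity of $F$ is immediate from the fact that $\{\pi_\ell:\ell\in S_N^{gen}\}\cup\widehat{\R^2}$ separates points of $C^*(G_N)$, and the inclusion $F(C^*(G_N))\subset D^*_N$ is exactly Theorem~\ref{csatisfies}. For surjectivity, the idea is to observe that every irreducible representation of $D^*_N$ is either the evaluation $\delta_\ell:A\mapsto A(\ell)$ at some $\ell\in S_N^{gen}$ (arising because $\delta_\ell$ has range in $\K$ and is automatically irreducible on its range), or factors through the quotient map $A\mapsto A(0)\in C^*(\R^2)$ and so is a character. Hence $\widehat{D^*_N}=\widehat{G_N}$ as sets, and $F(C^*(G_N))$ separates these points. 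Since $F(C^*(G_N))$ is a type I $C^*$-subalgebra of the type I algebra $D^*_N$ with matching dual spaces, the Stone--Weierstrass theorem for type I $C^*$-algebras \cite{Di} yields $F(C^*(G_N))=D^*_N$.

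The main obstacle is verifying that the topology on $\widehat{D^*_N}$ coincides with that of $\widehat{G_N}$, which is needed to apply Stone--Weierstrass. One direction is automatic from the definition of the generic, character and infinity conditions: a sequence converging in $\widehat{G_N}$ with perfect data $((t_i^k),(\rho_i^k),(s_k))$ forces $A(\ell_k)$ to converge via $\delta_{\ell^i}$ on the intervals controlled by $t_i^k,s_k\rho_i^k$ and to vanish on $T_k$. The converse uses the maximality in clause (5) of Proposition~\ref{sequences}: any additional limit of $\delta_{\ell_k}$ would have to come from a real sequence $(t_k)$ with $t_k\cdot\ell_k\to\ell$, but such an $\ell$ is already among the $\ell^i$, ruling out spurious limits. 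Once the topological match is in place, the $C^*$-algebraic Stone--Weierstrass argument closes the proof.
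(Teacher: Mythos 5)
The first half of your proposal (closure of $D^*_N$ under multiplication via cutting with $M_{s_k}$ and $M_{s_k\rho_i^k}$, using the generic and character conditions together with Proposition~\ref{pikjcom} and Corollary~\ref{ahom}, plus the $\varepsilon/3$ closedness argument and the inclusion $F(C^*(G_N))\subset D^*_N$ from Theorem~\ref{csatisfies}) matches the paper. The gap is in the surjectivity step: your assertion that every irreducible representation of $D^*_N$ is either an evaluation $\delta_\ell$, $\ell\in S_N^{gen}$, or factors through $A\mapsto A(0)$ is exactly the statement that has to be proved, and the justification you give (evaluations are irreducible because their range lies in $\K$) only shows that the expected representations occur, not that no others do. Unlike the Heisenberg case of Theorem~\ref{finalres}, the ideal $\{A\in D^*_N:\ A(0)=0\}$ is \emph{not} $C_0(S_N^{gen},\K)$: for $N\geq 4$ a sequence $(\ell_k)$ in one stratum $S_N^j$ can converge to infinite-dimensional representations attached to lower strata (the limit points $\ell^i$, $i\in D$, reached only after conjugating by $U(t_i^k)$ and cutting by $M_{s_k}$), so fields in this ideal need not vanish in norm at strata boundaries and its spectrum cannot be read off pointwise; a priori an irreducible representation of $D^*_N$ could live on such boundary behaviour. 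So asserting the dichotomy begs the question.

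The paper closes this gap by induction on $N$: it introduces the restriction homomorphisms $R_M:D^*_N\to D^*_M$ with kernels $I_M$, proves surjectivity of $R_M$ from the induction hypothesis via the identity $R_M(F_N(a))=F_M(a\ \mathrm{mod}\ J_M)$, identifies $I_{N-1}$ with $C_0(S_N^1,\K(L^2(\R)))$ --- the nontrivial point being that the generic, character and infinity conditions force $\Vert A(\ell_k)\Vert_{\rm op}\to 0$ for $A\in I_{N-1}$ along any sequence in $S_N^1$ whose limits leave $S_N^1$ (using $A(\ell_k)=A(\ell_k)\circ M_{S_k}+A(\ell_k)\circ M_{T_k}$ and Remark~\ref{continorm}) --- and then splits into the cases $\pi(I_{N-1})\neq 0$ (so $\pi$ is an evaluation at a point of $S_N^1$) and $\pi(I_{N-1})=0$ (so $\pi$ factors through $D^*_N/I_{N-1}\simeq C^*(G_{N-1})$ and lies in $\widehat{G_N/B_1}$). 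None of this inductive machinery appears in your proposal, and without it the Stone--Weierstrass step has no input. A secondary point: what Stone--Weierstrass needs is that $F(C^*(G_N))$ separates $\widehat{D^*_N}\cup\{0\}$, i.e.\ the identification of the dual spaces as sets; the topological comparison you single out as the main obstacle is not what is required, and your appeal to clause (5) of Proposition~\ref{sequences} concerns convergence of evaluations rather than the classification of arbitrary irreducible representations of $D^*_N$.
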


\begin{proof}
Let us first show that $ D^*_N $ is  a $ C^* $-algebra. We prove
first that $D^*_N$ is closed under multiplication. Let
$A=(A(\ell), \ell\in S_N)$ and $B=( B(\ell),\ell\in S_N)$ satisfy
the generic condition and let $(\pi_{\ell_k})_{k}\subset \hat {G}_N$ be
a properly convergent sequence with perfect data such that for
every limit point $\pi_{l^i}$, $i\in D$, and for every adapted
real sequence $(s_k)_k$ the fields $A$, $B$ satisfy
(\ref{generic_condition}). Then
\begin{eqnarray*}
 &&\noop{
 U(t^k_i)\circ A(\ell_k)\circ B(\ell_k)\circ U(-t^k_i)\circ
M_{s_k}-A(\ell^i)\circ B(\ell^i)\circ M_{s_k} }\\ &&\leq \noop{
(U(t^k_i)\circ A(\ell_k)\circ U(-t_i^k)\circ M_{s_k}-A(\ell^i)\circ
M_{s_k})\circ U(t_i^k)\circ B(\ell_k)\circ U(-t^k_i)\circ
M_{s_k}}\\
&&+\noop{A(\ell^i)\circ M_{s_k}\circ (U(t_i^k)\circ B(\ell_k)\circ
U(-t_i^k)\circ M_{s_k}-B(\ell^i)\circ M_{s_k})}\\
&&+\noop{U(t_i^k)\circ A(\ell_k)\circ U(-t_i^k)\circ (\I-
M_{s_k})\circ(U(t_i^k)\circ B(\ell_k)\circ U(-t_i^k)\circ
M_{s_k}-B(\ell^i)\circ M_{s_k})}
\\
&&+\noop{A(\ell^i)\circ(\I-M_{s_k})\circ B(\ell^i)\circ
M_{s_k}}\\
&&+\noop{U(t_i^k)\circ A(\ell_k)\circ U(-t_i^k)\circ
(\I-M_{s_k})\circ B(\ell^i)\circ M_{s_k}}.
\end{eqnarray*}
Since $B(\ell^i)$ is compact and $\I-M_{s_k}$ converges to $0$
strongly, $\noop{(\I-M_{s_k})\circ B(\ell^i)}\to 0$ giving that the
product $A(\ell)\circ B(\ell)$ satisfies the generic condition.

To see that the character condition is closed under multiplication
we argue as before, but use  $\noop{(\I-M_{s_k\rho_i^k})\circ
\nu(\va)(i,k)\circ M_{s_k\rho_i^k}}\to 0$ which is due to
Propsition~\ref{pikjcom}.

%we  take two fields $ A$ and $ B $ in $ D^*_N $ and we let
%$ \va=A(0), \ps=B(0)\in C^*(\R^2) $.
%Then for every properly
%converging sequence  with perfect data $ (\ell_k)_k$ in $S_N$
% and for every limit point  $\ell^i, i\in C,$
%and for every adapted real sequence $ (s_k)_k$ we have that
%\begin{eqnarray}
%\nn &&\noop{
% U(t^k_i)\circ A(\ell_k)\circ B(\ell_k)\circ U(-t^k_i)-\nu(\va\ps)(i,k) }\\
%\nn  &\leq&\noop{
% (U(t^k_i)\circ A(\ell_k)U(-t^k_i))(U(t^k_i)\circ B(\ell_k)\circ
%U(-t^k_i))-\nu(\va))(i,k)\circ \nu(\ps))(i,k)}\\
%\nn  &+&\noop{\nu(\va))(i,k)\circ \nu(\ps))(i,k)-\nu(\va\ps))(i,k)}
%\end{eqnarray}
%It suffices now to apply Corollary \ref{ahom}.

The infinity condition is clearly closed under multiplication of
fields.

By Theorem \ref{csatisfies}, the Fourier transform $ F $ maps
 $C^*(G_N)$ into $
D^*_N $
Let us show that the Fourier$ F $  is also onto. By the Stone-Weerstrass
approximation theorem, we must only prove  that  the dual space of $
D^*_N $ is the
same as the dual space of $ C^*(G_N) $. We proceed by induction on $ N $. If $
N=3 $, then $
G_N $ is the Heisenberg group and the statement  follows  from
Theorem~\ref{finalres}. Let $ \pi\in \widehat
{D^*_N} $. 

Let for $
M=3,\cdots  N-1 $, $ R_M: D_N^*\to D_M^*$ be the restriction map, i.e. denote
 by $ q_M:\g_N\to \g_N/\b_{N-M}\simeq \g_M $ the quotient map and by
  $q_M^t:\g_{M}^*\simeq\b_{N-M}^\perp\to\g_N^* $ its transpose. Then for an
operator
field $ A\in D^*_N $ we define the operator field $ R_M(A) $ over $ S_M^{\rm gen
} $ by:
\begin{eqnarray}
 \nn R_M (A)(\tilde \ell):= A(q^t_M(\tilde \ell)), \tilde \ell \in S^{\rm
gen}_M.
\end{eqnarray}
It follows from the definition of $ D_N^* $ that the image of $ R_M $ is
contained in $ D_M^* $.
Hence  $ R_M $ is  a homomorphism of $ C^* $-algebras, whose kernel
$ I_M $ is the ideal $$ I_M=\{A\in D^*_N, A(\ell)=0 \text{ for all
}\ell\in S_N \cap \b_{N-M}^\perp\} .$$
Let $ Q_M:C^*(G_N)\to C^*(G_M)\simeq C^*(G_N/B_{N-M}) $ be the canonical
projection. Then the kernel of this projection is the ideal $ J_M:=\{a\in
C^*(G_N); \pi_\ell(a)=0, \ell\in S_N\cap \b_{N-M}^\perp\} $.
Let us write $ F_M $ for the Fourier transform $ C^*(G_M)\to D^*_M $. With
these notations we have the formula
\begin{eqnarray}\label{modgm}
  R_M(F_N(a))=F_M(a\text{ modulo }J_M), a\in C^*(G_N).
\end{eqnarray}
Since by the induction hypothesis $ \widehat{D^*_M}=F_M(\widehat{C^*(G_M)}) $ 
for every $3\leq
M\leq N-1 $ we see from (\ref{modgm}) that $ R_M(F_N(C^*(G_N)))=F_M(C^*(G_M))=D_M^
*$ and so the mapping $ R_ M $ is surjective for
such an $M$. Hence $ D^*_N/I_M\simeq C^*(G_M) $.  We have also
$I_{N-1}\subseteq I_{N-2}\subseteq\ldots\subseteq I_3$.

If $ \pi(I_{N-1})=\{0\} $ then $\pi\in \widehat{G_{N}/B_1}\subset \widehat{G_N}
$.

Suppose now that $ \pi(I_{N-1})\ne \{0\} $. Let us show that $ I_{N-1}\simeq
 C_0(S_N^1,\K(L^2(\R)))$. It is clear from the
definition of $D_N^*$ that $C_0(S_N^{1},\K(\lt))\subset D_N^*$ and so is
contained in $
I_{N-1}$. It suffices to show now that $ I_{N-1}\subset
 C_0(S_N^1,\K(L^2(\R)))$. For that it is enough to see that for any  element $ A
$   in $ I_{N-1} $ and any  sequence  $ {(\ell_k)_k} $ in $ S_N^1 $ for which
either  $ (\pi_{\ell_k}) $ converges  to infinity or to a representation $
\pi_\ell $ with $ \ell\not\in S_N^1$, we have that  $ \lim_k\noop
{A(\ell_k)}=0 $. This follows from the infinity condition in the first case.
In the second case no limit point of the sequence $ (\pi_{\ell_k}) $ is in $
S_N^1 $ by Remark \ref{continorm}. It suffices to show then that $ \lim_k
\noop{A(\ell_k)}=0 $ for every subsequence with perfect data (also indexed by $
k $ for simplicity of notation).
We have with the notations  of \ref{gencon} 
that for $k\in\N $
\begin{eqnarray} 
 \nn A(\ell_k) &=& A(\ell_k)\circ M_{S_k} +
A(\ell_k)\circ M_{T_k}.
\end{eqnarray}
 where $ S_k=\cup_{i} (t^k_i-s_k\rho_i^k,t^k_i+s_k\rho_i^k) $, $T_k=\R\setminus
S_k$.

Since $ A(\ell)=0 $ for every $ \pi_{\ell} $ in the limit set of the
sequence $ (\pi_{\ell_k})_k $,  the generic and the
character  conditions say that
$$ \lim_k\noop{U(t^k_i)\circ A(\ell_k)\circ U(t^k_i)\circ M_{s_k\rho_i^k}}=0 .$$

Hence
$$\lim_k\noop{ A(\ell_k)\circ M_{t^k_i,s_k\rho_i^k}}=0 $$

and since also $$\lim_k\noop{ A(\ell_k)\circ M_{T_k}}=0 $$ it
follows that $ \lim_k \noop{A(\ell_k)}=0. $  Hence $ I_{N-1}\subset
 C_0(S_N^1,\K(L^2(\R)))$ and so $ I_{N-1}=
 C_0(S_N^1,\K(L^2(\R)))$. Finally $ \pi\res {I_{N-1}} $ is evaluation in some
point $ \ell\in S_N^1 $ and so $ \pi\in \widehat{G_N} $. This finishes the
proof of the theorem.

\end{proof}

\bigskip

\noindent {\bf Acknowledgements.}  We would like
to thank K. Juschenko for the reference \cite{gorbachev}.
The second author was supported by the Swedish Research Council.

 \noi Jean Ludwig, {\it Laboratoire LMAM, UMR 7122,   D\'epartement de
Math\'ematiques,
Universit\'e Paul Verlaine  Metz, Ile de Saulcy, F-57045 Metz
cedex 1, France, ludwig@univ-metz.fr}\\

\noi Lyudmila  Turowska, {\it  Department of Mathematics, Chalmers University of
Technology and University of Gothenburg, SE-412 96 G\"oteborg, Sweden, turowska@chalmers.se}
\end{document}